\newtheorem{thm}{Theorem}[section]
\newtheorem{conj}[thm]{Conjecture}
\newtheorem{lem}[thm]{Lemma}
\theoremstyle{definition}
\theoremstyle{remark}
\newtheorem{rmk}[thm]{Remark}
\newcommand{\lf}{\lfloor}
\newcommand{\rf}{\rfloor}
\newcommand{\floor}[1]{\lf {#1} \rf}
\newcommand{\NN}{{\mathbb N}}
\newcommand{\RR}{{\mathbb R}}
\newcommand{\sP}{{\mathcal P}} 
\newcommand{\sX}{{\mathcal X}}
\newcommand{\sY}{{\mathcal Y}}
\newcommand{\pdx}{\textrm{pdx}}
\newcommand{\Size}{{\rm Size}} 
\newcommand{\size}{{\rm size}} 
\newcommand{\Tot}{{\rm Max}}
\newcommand{\Per}{{\rm Per}}
\newcommand{\per}{{\rm per}}
\newcommand{\Sper}{W_{\rm per}}
\newcommand{\HSper}{\widehat{W}_{\rm per}}
\newcommand{\Ssize}{W_{\rm size}}
\newcommand{\SsizeX}{W^*_{{\rm size}}}
\newcommand{\HSsize}{\widehat{W}_{\rm size}}
\newcommand{\SsizeA}{W_{\rm size}^{\ast\ast}}
\newcommand{\Cper}{C_{\rm per}}
\newcommand{\HCper}{\widehat{C}_{\rm per}}
\newcommand{\CperX}{C^*_{{\rm per}}}
\newcommand{\SperX}{W^*_{{\rm per}}}
\newcommand{\Csize}{C_{\rm size}}
\newcommand{\HCsize}{\widehat{C}_{\rm size}}
\newcommand{\CsizeX}{C^*_{{\rm size}}}
\newcommand{\mult}{\rm mult}
\newcommand{\HStot}{\widehat{W}_{\rm max}}
\newcommand{\HCtot}{\widehat{C}_{\rm max}}
\newcommand{\StotX}{W^*_{{\rm max}}}
\newcommand{\CtotX}{C^*_{{\rm max}}}
\newcommand{\lp}{{\it r}}
\renewcommand{\theequation}{\arabic{section}.\arabic{equation}}
\begin{document}

\noindent
\large {
\title{ Asymptotics of  reciprocal supernorm partition statistics } }

\author{Jeffrey C. Lagarias}
\thanks{The research of the first author was partially supported by NSF grant
DMS-1701576}
\address{Dept.\ of Mathematics, University of Michigan, Ann Arbor MI 48109-1043} 
\email{lagarias@umich.edu, ORCID: 0000-0002-4157-5527}

\author{Chenyang Sun}
\address{Dept.\ of Mathematics, Williams College, Williamstown, MA 01267} 
\email{quadraticreciprocity.pq@gmail.com, ORCID: 0000-0001-6755-7012}
%\email{cs19@williams.edu}
\date{August 20, 2023, v69NA, short}

\begin{abstract} 
We consider two multiplicative statistics on the set of integer partitions: the
norm of a partition, which is the product of its parts, and the 
 supernorm of a partition, which is the product of the prime numbers $p_i$ 
 indexed by its parts $i$. We introduce and study new statistics that are sums of reciprocals of supernorms
 on three statistical ensembles of partitions, labelled by their size $|\lambda|=n$,
 their perimeter equaling $n$, and  their largest part equaling $n$. 
We show that  the cumulative statistics of the reciprocal supernorm for
 each of the  three ensembles are
asymptotic to  $e^{\gamma} \log n$ as $n \to \infty$.
\end{abstract}

\subjclass[2020]{Primary: 11P82, Secondary: 05A17}
\keywords{asymptotic estimates, Euler's constant, integer partitions, Mertens theorems, partition norm}

\maketitle

\bibliographystyle{amsplain}
%%%%%%%
% Section1
%%%%%%%

%%%%%%%%%%%%%%%%%%%%%%%%%%%%%%%%%%%%%%%%%%%%%
%
% Section1
%
%%%%%%%%%%%%%%%%%%%%%%%%%%%%%%%%%%%%%%%%%%%%
\section{Introduction}
\setcounter{equation}{0}

This paper studies asymptotics as $n \to \infty$ of  two multiplicative partition statistics, evaluated 
over all partitions in  (statistical) ensembles of integer partitions. 
To define them, let $\lambda$ denote a finite integer partition 
 in {\em part-notation} as $\lambda:= (\lambda_1, \lambda_2,  \cdots, \lambda_r)$ 
with $\lambda_1 \ge \lambda_2 \ge \cdots \ge \lambda_r \ge 1$.
Here $r=r(\lambda)$ denotes the number of its parts. We will use the
prime numbers $p_n$ indexed in increasing order, so $p_1=2, p_2=3, \cdots$.
The multiplicative partition statistics considered are built from the {\em partition norm} statistic
$N(\lambda)= \prod_{i=1}^r \lambda_i$ which is the product of its parts, and the
{\em partition supernorm statistic} $\widehat{N}(\lambda)= \prod_{i=1}^r p_{\lambda_i}$, which is the product
of the primes indexed by its parts. 

The statistics are the 
reciprocals $\frac{1}{N(\lambda)}$ and $\frac{1}{\widehat{N}(\lambda)}$
of the partition norm and partition supernorm statistics, respectively, considered for
three different  ensembles of partitions,  depending on a parameter $n$.
The first ensemble is the set $\Size(n)$ of partitions of size $n$ (the sum of its parts);
the second ensemble  is the set $\Per(n)$ of partitions of perimeter $n$ (defined in Section \ref{sec:1b}); and
(for the supernorm)  the third  ensemble is the set  $\Tot(n)$ of partitions
having largest part $n$. Definitions are given in Section \ref{sec:1b}.

Statistics  for sums of reciprocal norms of partitions of size $n$ were studied by D. H. Lehmer in 1972; his results
are  described in Section \ref{sec:15}. Lehmer showed
\begin{equation}\label{eq:size-norm-stat}
\sum_{ \size(\lambda)=n} \frac{1}{N(\lambda)} \sim e^{-\gamma}n
 \end{equation} 
 as $n \to \infty$, where  $\gamma$ is Euler's constant.

 This paper studies  the cumulative statistics of the reciprocal supernorm statistic, for these ensembles.
  The main results are stated in Section \ref{sec:2}. 
  For  the  cumulative size ensemble,  
    summed over partitions in  $\Size(k)$ for $k \le n$, we show 
 \begin{equation}\label{eq:cum-size-supernorm-stat}
 \sum_{ \size(\lambda) \le n } \frac{1}{\widehat{N}(\lambda)} \sim e^{\gamma} \log n.
 \end{equation}
 as $n \to \infty$, see Theorem \ref{thm:25}. We also  give results for reciprocal supernorms for
 the other two ensembles, 
 finding   that  the cumulative statistics for all three ensembles have   the same main term
 $e^{\gamma} \log n$.

 The reciprocal supernorm statistic is the main focus of the paper.
 In an Appendix to the paper, we  treat ensemble  statistics for the 
  reciprocal norm statistic on the $\Per(n)$
 ensemble and a modified largest part $n$ ensemble $\Tot^{\ast}(n)$ that excludes all partitions having a part of
 size $1$.  An interesting open question raised there is to  the determine the main term in the  growth rate of the reciprocal
norm statistic for the perimeter ensemble as $n \to \infty$.
 
 %%%%%%%%%%%%%%%%%%%%%%%%%%%%%
%
 %% 1.1   Asymp. Lehmer asymptotics for size-binned norms
 %
 %%%%%%%%%%%%%%%%%%%%%%%%%%%%
 \subsection{ Lehmer's asymptotics for size-ensemble  reciprocal norms} \label{sec:15}
  
 D. H. Lehmer \cite{Lehmer:72} in 1972 studied sums of reciprocal norms of 
  partitions of fixed size $n$,  which is
 \begin{equation}
 \Ssize(n) = \sum_{\lambda: \size(\lambda)=n} \frac{1}{N(\lambda)} .
 \end{equation}
 He  found the asymptotics of $\Ssize(n)$ as $n \to \infty$:
%%%%%%%%%%
% Theorem 1.1
%%%%%%%%%%
 \begin{thm}[Lehmer]\label{thm:11}
 The reciprocal norm statistic summed over partitions of size $n$ satisfies
 $$
 \Ssize(n) \sim e^{-\gamma} n 
 $$ 
 as $n \to \infty$.
 \end{thm}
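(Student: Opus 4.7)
My plan is to study the generating function of $\Ssize(n)$ and extract its asymptotics by combining a singular expansion at $q=1$ with a Tauberian argument. Writing each $\lambda$ in multiplicity notation $(m_k)_{k \geq 1}$, so that $\size(\lambda) = \sum_k k m_k$ and $N(\lambda) = \prod_k k^{m_k}$, the generating function factors multiplicatively as
$$F(q) := \sum_{n \geq 0} \Ssize(n)\, q^n \;=\; \prod_{k \geq 1} \sum_{m \geq 0} \frac{q^{km}}{k^m} \;=\; \prod_{k \geq 1} \frac{1}{1 - q^k/k}.$$

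The first step is to pin down the singular behavior of $F$ at $q=1^-$. Expanding
$$\log F(q) \;=\; \sum_{k,m \geq 1} \frac{q^{km}}{m\, k^m}$$
and separating the three regions $k=1$, $(k \geq 2,\, m=1)$, and $(k \geq 2,\, m \geq 2)$ gives, in turn, the contributions $-\log(1-q)$, $-\log(1-q) - q$, and (by dominated convergence, since $\sum_{k,m \geq 2}(m k^m)^{-1} < \infty$) a bounded term tending to $\sum_{m \geq 2}(\zeta(m) - 1)/m$. The classical Euler identity $\sum_{m \geq 2}(\zeta(m)-1)/m = 1 - \gamma$, obtainable by telescoping $\sum_{k=2}^{N}\log(k/(k-1)) = \log N$ against the Taylor expansion of $-\log(1-1/k)$, then yields
$$\log F(q) \;=\; -2 \log(1-q) \,-\, \gamma \,+\, o(1) \qquad \text{as } q \to 1^-,$$
so $F(q) \sim e^{-\gamma}(1-q)^{-2}$.

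Next, since $\Ssize(n) \geq 0$, the Karamata Tauberian theorem gives the cumulative estimate $\sum_{k \leq n}\Ssize(k) \sim \tfrac{1}{2}\,e^{-\gamma}\, n^2$. To upgrade this to the pointwise statement $\Ssize(n) \sim e^{-\gamma}\, n$, I would use monotonicity: the map $\Size(n) \hookrightarrow \Size(n+1)$ that appends a part equal to $1$ is an injection preserving $1/N(\lambda)$, so $\Ssize(n)$ is nondecreasing. Bracketing $\Ssize(n)$ between the finite differences $(A(n+h)-A(n))/(h+1)$ and $(A(n)-A(n-h))/h$ of the partial-sum function $A(n) = \sum_{k \leq n}\Ssize(k)$, with $h \to \infty$ and $h/n \to 0$, then extracts the claimed pointwise asymptotic.

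The main obstacle is the singular expansion itself: because the Euler factors of $F$ have poles at $q = k^{1/k}$ that accumulate on $|q|=1$ from outside the unit disk, the transfer theorems of Flajolet--Sedgewick are not directly applicable, and the delicate constant $e^{-\gamma}$ emerges only after the careful rearrangement above. An alternative route, via saddle-point evaluation of $\frac{1}{2\pi i}\oint F(q)\, q^{-n-1}\, dq$ on a circle $|q|=r(n) \to 1^-$, would avoid the Tauberian-plus-monotonicity step at the cost of more technical contour estimates.
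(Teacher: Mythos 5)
Your proof is correct, and it takes a genuinely different route from the one the paper attributes to Lehmer. You work directly with the generating function $F(q)=\prod_{k\ge1}(1-q^k/k)^{-1}$ of $\Ssize$: the split of $\log F$ into the pieces $k=1$, $(k\ge2,\ m=1)$, and $(k\ge2,\ m\ge2)$ is clean, the Euler identity $\sum_{m\ge2}(\zeta(m)-1)/m=1-\gamma$ is applied correctly (the $-q$ from the second piece and the $+1$ hidden in the third cancel as $q\to1^-$, leaving $\log F(q)=-2\log(1-q)-\gamma+o(1)$), Karamata's Tauberian theorem gives $\sum_{k\le n}\Ssize(k)\sim\tfrac12 e^{-\gamma}n^2$, and monotonicity of $\Ssize$ --- established by the norm-preserving injection $\Size(n)\hookrightarrow\Size(n+1)$ that appends a part of size $1$ --- upgrades this to $\Ssize(n)\sim e^{-\gamma}n$ by the usual difference-quotient bracketing. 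The paper, by contrast, records that Lehmer first proved the finer pointwise statement $\SsizeX(n)\to e^{-\gamma}$ of Theorem~\ref{thm:12}, where $\SsizeX(n)$ restricts the sum to partitions with no part equal to $1$, and then obtained Theorem~\ref{thm:11} by summing, since $\Ssize(n)=\sum_{j\le n}\SsizeX(j)$. That route extracts more from the generating function than yours does: the generating function of $\SsizeX$ is $(1-q)F(q)\sim e^{-\gamma}(1-q)^{-1}$, but $\SsizeX$ is not monotone (it visibly oscillates with the parity of $n$ for small $n$), so your Karamata-plus-monotonicity device would not directly yield the pointwise limit $\SsizeX(n)\to e^{-\gamma}$; Lehmer must work harder to get it. Your argument is therefore a shorter, more off-the-shelf proof of Theorem~\ref{thm:11} as stated, at the cost of not recovering Theorem~\ref{thm:12} as a byproduct. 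Your closing observation that the poles of $F$ at $q=k^{1/k}$ accumulate on $|q|=1$, blocking direct Flajolet--Sedgewick singularity transfer, is exactly why a Tauberian or saddle-point ingredient is unavoidable here.
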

 
 Lehmer proved Theorem \ref{thm:11} using generating functions; 
 he did not obtain  any estimate for the size of the error term.\\
 
 Figure \ref{fig:W-size} plots Lehmer's statistic $\Ssize (n)$ with its asymptotic $e^{-\gamma}x$,
 for $1 \le n \le 70.$
 
 %%%%%%%%%%%%%%%%%%%%%%%%%%%%%%%%%%
 % FIGURE 1  Lehmer individual reciprocal norms  for partitions of n
 %%%%%%%%%%%%%%%%%%%%%%%%%%%%%%%%%%%
 
 \begin{figure}[h!]
    \centering
    \includegraphics[width=5cm]{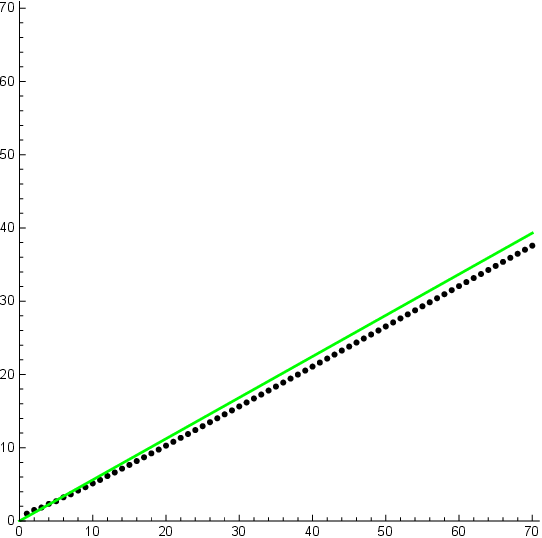}
    \caption{Plot of $W_{\text{size}}(n)$ in dots, $e^{-\gamma}x$ in green.}
    \label{fig:W-size}
\end{figure}

 Parts of size $1$ do not affect the norm  of a partition, therefore one has
 $$
\Ssize (n) = \sum_{j=0}^n \SsizeX (j),
$$ 
 where $\SsizeX(j)$ denotes the reciprocal norm statistic summed over 
 partitions of size $n$ containing no $1$'s.
Lehmer derived Theorem \ref{thm:11} via an estimate for  the individual $\SsizeX (j)$.
 
 %%%%%%%%%%
% Theorem 1.2
%%%%%%%%%%
 \begin{thm}[Lehmer]\label{thm:12} 
  The reciprocal norm statistic summed over partitions of size $n$ containing no $1$'s,
 denoted $\SsizeX(n)$,   satisfies
 \begin{equation}
 \SsizeX(n) \sim e^{-\gamma} ,
 \end{equation}
 as $n \to \infty$. 
 \end{thm}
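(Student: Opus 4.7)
My plan is to follow Lehmer's generating-function strategy. First I will encode $\SsizeX$ by the Euler product obtained from the unique multiplicity representation of partitions: each partition with no part equal to $1$ is determined by its multiplicity sequence $(m_k)_{k \geq 2}$, with size $\sum_{k \geq 2} k m_k$ and reciprocal norm $\prod_{k \geq 2} k^{-m_k}$. Expanding each factor as a geometric series yields
\[ F(x) \;:=\; \sum_{n=0}^{\infty} \SsizeX(n)\, x^n \;=\; \prod_{k=2}^{\infty} \frac{1}{1 - x^k/k}. \]

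Second, I will isolate the singularity at $x = 1$. Using $-\log(1 - x^k/k) = \sum_{m \geq 1} x^{km}/(m k^m)$ and peeling off the $m=1$ contribution,
\[ \log F(x) \;=\; \sum_{k \geq 2} \frac{x^k}{k} + R(x) \;=\; -\log(1-x) - x + R(x), \]
where $R(x) := \sum_{k,m \geq 2} x^{km}/(m k^m)$ converges absolutely on $|x| \leq 1$. The key arithmetic input is the classical Euler identity
\[ R(1) \;=\; \sum_{m \geq 2} \frac{\zeta(m)-1}{m} \;=\; 1-\gamma, \]
which I would verify by interchanging summations and telescoping: with $H_N = \sum_{n=1}^{N} 1/n$, the partial sums satisfy $\sum_{n=2}^{N} [\log(n/(n-1)) - 1/n] = \log N - (H_N - 1) \to 1 - \gamma$. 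Combining,
\[ (1-x)\, F(x) \;=\; e^{-x + R(x)} \;\longrightarrow\; e^{-\gamma} \quad\text{as}\quad x \to 1^-. \]

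The third and hardest step is the Tauberian passage. Setting $G(x) := (1-x)F(x) = e^{-x + R(x)}$ and $g_n := [x^n] G(x)$, one has $\SsizeX(n) = \sum_{j=0}^n g_j$, so the theorem is equivalent to $\sum_{n=0}^{\infty} g_n = e^{-\gamma}$. A direct monotone Tauberian argument fails because $\SsizeX(n)$ is not monotone (for instance $\SsizeX(1) = 0 < 1/2 = \SsizeX(2)$). Two routes present themselves: (i) derive sufficient decay of $g_n$ directly by expanding $e^{-x+R(x)}$ as a convergent multiple sum and exploiting that each $\sum_{k \geq 1} x^{km}/k^m$ for $m \geq 2$ is continuous on $|x| \leq 1$, whence $G$ inherits boundary regularity that bounds $g_n$; or (ii) apply Cauchy's formula $g_n = (2\pi i)^{-1} \oint x^{-n-1} G(x)\, dx$ on a Hankel-type contour pinched near $x = 1$ and estimate the oscillatory tail. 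Lehmer's original argument is likely the more elementary recurrence-based alternative, obtained by logarithmic differentiation of $F$ to produce a convolution recurrence for $\SsizeX(n)$ and then extracting the $e^{-\gamma}$ constant via elementary asymptotic analysis.
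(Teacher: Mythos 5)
The paper does not give a proof of this theorem: it is quoted from Lehmer's 1972 article and the only comment offered is that ``Lehmer proved this result using generating functions'' with $\gamma$ entering through the constant term of $H_n$, so there is no in-paper argument to compare against. Your generating-function setup is consistent with that description and your algebra is correct: $F(x)=\prod_{k\ge2}(1-x^k/k)^{-1}$, $\log F(x)=-\log(1-x)-x+R(x)$ with $R(x)=\sum_{k,m\ge2}x^{km}/(mk^m)$, $R(1)=\sum_{m\ge2}(\zeta(m)-1)/m=1-\gamma$, and hence $(1-x)F(x)\to e^{-\gamma}$ as $x\to 1^{-}$.

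The genuine gap is that the Tauberian step is announced but not carried out: you correctly observe that Abel summability of $\sum g_n$ does not by itself give $\SsizeX(n)=\sum_{j\le n}g_j\to e^{-\gamma}$, list two ``routes'' and a guess about Lehmer's recurrence, and stop. As written, this is the hardest part of the theorem and it is missing. The fix is actually short and you should commit to it. Note that $R(x)=\sum_n r_n x^n$ has nonnegative coefficients with $\sum_n r_n=R(1)=1-\gamma<\infty$, so $-x+R(x)$ lies in the Wiener algebra $A(\overline{\mathbb D})$ of power series with absolutely summable coefficients; since $A(\overline{\mathbb D})$ is a Banach algebra and $e^z$ is entire, $G(x)=e^{-x+R(x)}=\sum_n g_n x^n$ also lies in $A(\overline{\mathbb D})$, i.e.\ $\sum_n |g_n|<\infty$. (More concretely, write $G=e^{-x}\cdot e^{R(x)}$: the second factor has nonnegative Taylor coefficients summing to $e^{R(1)}$, and convolving with the coefficients of $e^{-x}$ gives $\sum_n|g_n|\le e\cdot e^{R(1)}=e^{2-\gamma}$.) Absolute convergence then gives $\sum_n g_n=G(1)=e^{-\gamma}$, and since $F(x)=G(x)/(1-x)$ forces $\SsizeX(n)=\sum_{j=0}^n g_j$, the claimed limit follows without invoking any Tauberian theorem at all. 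With this paragraph added, your proposal is a complete and self-contained proof.
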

 
 Lehmer proved this result  using generating functions, and he  did not give any  estimate
 of rate of convergence.
Figure \ref{fig:W-size-1} plots Lehmer's individual statistic $\SsizeX (n)$ against  its asymptotic value $e^{-\gamma}$,
  for $1 \le n \le 40$. 

 %%%%%%%%%%%%%%%%%%%%%%%%%%%%%%%
 % FIGURE 2 Lehmer individual statistic (no 1's in partitions.
 %%%%%%%%%%%%%%%%%%%%%%%%%%%%%%%

 \begin{figure}[h!]
    \centering
    \includegraphics[width=8cm]{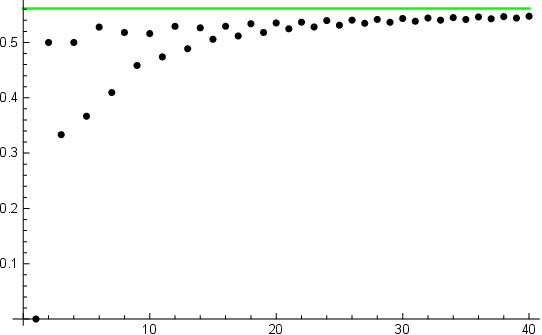}
    \caption{Plot of $\SsizeX(n)$ in dots, $e^{-\gamma}$ in green. }
    \label{fig:W-size-1}
\end{figure}

 Figure \ref{fig:W-size-1}  exhibits two lines of points for small values of $n$
 coming from odd $n$ and even $n$, with  the odd $n$ having smaller values.\medskip

 Euler's constant appears  in the limiting asymptotics for $\SsizeX(n)$,  via the constant $e^{-\gamma}$.
  
   %%%%%%%%%%%%%%%%%%%%%%%%%%%%%
%
 %% 1.2  Plan of paper
 %
 %%%%%%%%%%%%%%%%%%%%%%%%%%%%
 \subsection{ Motivation} \label{sec:15b}
The occurrence  of Euler's constant in Lehmer's work on reciprocal norms of partitions
 motivates study of analogous statistics for other multiplicative statistics of partitions.
  
  Euler's constant   is an unusual
 constant that seems unrelated to other known constants.
 It  appears in multiplicative
 number theory, prime number theory, special functions, probability theory and random matrix theory,
 see the survey paper \cite{Lagarias:13}.  
 The appearance of Euler's
 constant  in different areas has led to the discovery
 of unexpected analogies and  connections between these areas.
 In Lehmer's proof of Theorem \ref{thm:12} it enters via the constant term in the asymptotic expansion
 of the harmonic numbers $H_n = \sum_{k=1}^n \frac{1}{k}$ (which is exponentiated in the proof).

 The  supernorm statistic seems particularly worthy of study, giving a bijection between partitions
 and $\NN$ (\cite{SchS:20}), and having  compatibility with both
 additive and multiplicative orderings  on $\NN$, as  noted in \cite{Lagarias:23}.
Our results show  that Euler's constant  also appears in the asymptotic formula \eqref{eq:cum-size-supernorm-stat}
 for cumulative reciprocal supernorms of size at most $n$. This appearance 
of Euler's constant for reciprocal supernorms seems to arise  from a different mechanism, through Mertens' 
product formula,  than that 
 explaining its appearance in the asymptotic formula
 \eqref{eq:size-norm-stat} for reciprocal norms of size $n$.

%%%%%%%%%%%%%%%%%%%%%%%%%%%%%
%
 %% 1.2  Plan of paper
 %
 %%%%%%%%%%%%%%%%%%%%%%%%%%%%
 \subsection{ Plan of paper} \label{sec:16}

\begin{enumerate}
\item
In Section \ref{sec:1b} we give definitions, notation for concepts, 
and define the three ensembles of partitions studied.
\item
In Section \ref{sec:2} we state the main results for the cumulative reciprocal  supernorm statistic,
and formulate some conjectures for the non-cumulative reciprocal supernorm statistics on
the three ensembles.
\item
In Section \ref{sec:4} we give preliminary estimates involving prime counting
functions for the supernorm estimates, needed for the proofs.

 \item
 In Section \ref{sec:5} we prove the main results for the 
 cumulative reciprocal supernorm statistic on the three ensembles.
 \item
 In Section \ref{sec:6} we raise 
  further questions, particularly that of studying  power-weighted norm
 and supernorm statistics, which are $N(\lambda)^{-\beta}$ and $\widehat{N}(\lambda)^{-\beta} $. The  parameter
 $\beta >0$  is analogous to an inverse temperature parameter in statistical mechanics. It may be
 that the value $\beta=1$ studied in this paper is of special interest.
 %%%%%%%%%%%%%%%%%%%%%%%%%%%%%%%%%%%%%%%%
 % Special interest because three separate parititon  ensembles have same asymptotics. 
 % One author guesses this does not happen at  values of $\beta<1$.
 %%%%%%%%%%%%%%%%%%%%%%%%%%%%%%%%%%%%%%%
 \item
 In Appendix \ref{sec:appendixA} we give some results on the norm statistic
 for the $\Per(n)$ ensemble and for a restricted $\Tot$ ensemble. 
 \end{enumerate}
 
%%%%%%%%%%%%%%%%%%%%%%%%%%%%%%%%%%%%%%%%%%%%%
%
% Section2
%
%%%%%%%%%%%%%%%%%%%%%%%%%%%%%%%%%%%%%%%%%%%%
\section{Partition Statistics and Ensembles}\label{sec:1b}

We recall  definitions and notation for integer partitions, formulate what are additive and multiplicative
partition statistics, and define three 
partition ensembles.
 Our notation mostly follows Stanley \cite{Stan1}, \cite{Stan2}.
 
 Let  $\sP$ denote the set of all finite integer partitions $\lambda$, including the empty partition.
We denote such a partition in {\em part-notation} as $\lambda:= (\lambda_1, \lambda_2,  \cdots, \lambda_r)$ 
with $\lambda_1 \ge \lambda_2 \ge \cdots \ge \lambda_r \ge 1$.
Here $r=r(\lambda)$ denotes the number of its parts. 

A partition  may alternatively be regarded as the multi-set of its parts, denoted   
  in {\em part-multiplicity notation} as \, 
 $\lambda = [ 1^{m_1}2^{m_2} \cdots j^{m_j} \cdots]$, where $m_j=m_j(\lambda) \ge 0$, and only finitely many 
 multiplicities $m_j$ may be nonzero.

 %%%%%%%%%%%%%%%%%%%%%%%%%%
% 2,1 additive partition statistics
%%%%%%%%%%%%%%%%%%%%%%%%%%
\subsection{Additive  partition statistics}\label{sec:21} 
Additive partition statistics are statistics that add weight functions $f(i, \lambda_i)$ of individual  parts
 of a partition, which may also depend on  the indices $i$ of the numbering of the parts in the part-notation. 
 %There are well known additive statistics of partitions.  
 
 \begin{enumerate}
 \item[(1)]
 The    {\em length} $\ell(\lambda)$ of a partition $\lambda$    is the number of its nonzero parts,
 so  in part-notation $\ell(\lambda)=r$. The associated weight function is  $f(i, \lambda_i) = 1$ if $\lambda_i \ne 0$.
In part-multiplicity notation, $\ell(\lambda) =\sum_{j \ge 1} m_j$.
 \item[(2)]
The  {\em size} $n= |\lambda|$ of a partition $\lambda$ is the sum  of its parts. 
  $$
  n= \size (\lambda) 
  = |\lambda| := \sum_{i=1}^r \lambda_i.
  $$
  The associated weight function is $f(i, \lambda_i) = \lambda_i$.
   The empty partition $\lambda= \emptyset$ has size $0$.
 In part-multiplicity notation,  we have 
  \begin{equation}
  |\lambda| = \sum_{j \ge 1} j \cdot m_j .
  \end{equation}
 The {\em size} of a single part $\lambda_i$  of a partition $\lambda$ is its size viewed 
 as a one-part partition $\lambda_i$.
 \item[(3)]
 The {\em perimeter} $\per(\lambda)$ of a partition is
$$
\per(\lambda) :=\lambda_1 + r-1.
$$
We define  $\per(\emptyset)=1$, noting also that $\per([1]) =1$.
The associated weight function is $f(1, \lambda_1)= \lambda_1$ and, for $i \ge 2$, $f(i, \lambda_i)=1$ when $\lambda_i \ne 0$
and $f_i(i, \lambda_i)=0$ otherwise.
\item[(4)]
The {\em largest part} $\lp(\lambda)$  of a partition $\lambda$ is $\lambda_1$.
The associated weight function is $f(1, \lambda_1)= \lambda_1$ and for  $f(i, \lambda_i)=0$ for $i \ge 2$. 
The largest part of the empty partition is $0$.
 \end{enumerate}

 .

The ``perimeter" of a partition 
has a geometric interpretation in terms of the shape of
its \textit{Young diagram}. The {\em Young diagram} of a partition $\lambda$ is  a union of unit squares,
in the square lattice embedded in $\RR^2$. In the English notation, 
the Young diagram consists of a left-justified series of $r$ rows of unit squares of side $\lambda_j$ with
longest row at the top. One may picture it as having, for $1 \le j \le r$, 
with $\lambda_j$ squares in the $j$-th row, having leftmost square abutting  the $y$-axis with
lower left corner $(0, -j)$, for $1 \le j \le r.$ 
The  geometric interpretation of the  ``perimeter" in   $\per(\lambda)$  is half the length of the perimeter of the
Young diagram of a partition, minus $1$. 

A second geometric interpretation   $\per (\lambda)$ is  in terms of the 
boundary of the smallest rectangle that encloses the Young diagram of $\lambda$.
It is one less than half the perimeter of this rectangle.

A third interpretation of  $\per(\lambda)$ is that it is  the length of the largest {\em hook} of the partition:
as studied  in the paper of  Fu and Tang \cite{FuTang:18}, 
see also  Lin et al \cite{LinXY:22}.

%%%%%%%%%%%%%%%%%%%%%%%%%%
% 2,2 multiplicative partition statistics
%%%%%%%%%%%%%%%%%%%%%%%%%%
\subsection{Multiplicative partition statistics}\label{sec:13} 

Multiplicative partition statistics
multiply arithmetic functions $f: \NN \to \RR_{>0}$
which we consider to be a weight function,
over the parts of a partition.
They are statistics of the form
\begin{equation*}
N_{f}(\lambda) = \prod_{i} f(\lambda_i).
\end{equation*} 
A multiplicative theory of integer partitions  was developed by 
Schneider, see  \cite{Sch:16}, \cite{Sch:18}.

We consider two statistics of this type: the norm and the supernorm.
\begin{enumerate}
\item[(1)]
The  {\em norm} $N (\lambda)$ of a partition, is the product of its parts
$$
N(\lambda) = \prod_{i=1}^r \lambda_i.
$$
By convention we assign $N(\emptyset)=1$. 
(The associated weight  function is $f(n) = n$.)

 \item[(2)] 
 The {\em supernorm} $\widehat{N}(\lambda)$ of a partition
 $\lambda \in \sP$   is,  for  nonempty partitions, 
 $$
 \widehat{N}(\lambda) = \prod_{i=1}^r p_{\lambda_i},
 $$
 where $p_k$ denotes the $k$-th prime in increasing order, with $p_1=2$.
 By convention $\widehat{N}(\emptyset) = 1$ and  $p_0=1$. 
 (The associated weight function  is $f(n)= p_n$, which is called
 the pre-index function in  $\pdx(n)$ in \cite{DJS:21}) .
 \end{enumerate}

 The norm  statistic has been  studied repeatedly
 under various different notations, tracing back to MacMahon \cite{MacMahon:1923} in 1923.  
The reciprocal norm, taking $f(x) = \frac{1}{x}$,
 was studied by Lehmer \cite{Lehmer:72} in 1972. 
Results on the norm of partitions are surveyed in Schneider and Sills \cite{SchS:20},
see also \cite{KumarR:20}.

 The supernorm statistic was introduced in 2021  by Dawsey, Just, and Schneider \cite{DJS:21}. 
 The paper \cite{DJS:21}  proves that  the  supernorm map $\widehat{N} : \sP \to \NN^{+}$ is  a bijection onto $\NN^{+}$.
 The  pullback from $\NN^{+}$ to $\sP$ 
of the additive total ordering on $\NN^{+}$  under this bijection $\widehat{N}$ then defines a
 total ordering $\le_{S}$ on partitions $\sP$, which we term the {\em supernorm ordering}.
 These authors note that the supernorm ordering  allows various statistics on partitions to be converted to statistics on integers,
 given as arithmetic functions. This dictionary between integer partitions and  positive integers
 provides a new optic through which to analyze various arithmetic functions. 

 The paper \cite{Lagarias:23} characterized the supernorm as the unique bijection $f: \sP \to \NN^{+}$
 in which  the image of the Young's lattice order  respects  the additive order and maps the 
multiset partial order bijectively into the divisor order. The multiset partial order is the partial order
induced by multiset inclusion, treating a partition as a multiset of positive integers.
The multiset  partial order appears  in Andrews \cite[Definition 8.2]{Andrews:88}.

 %%%%%%%%%%%%%%%%%%%%%%%%%%
 %
% 2.3 statistical ensembles of partitions
%
%%%%%%%%%%%%%%%%%%%%%%%%%%
\subsection{Statistical ensembles of partitions}\label{sec:14} 
We now consider partition statistics aggregated into ensembles.  The study of such
ensembles arose in models in statistical mechanics.

The ensembles we consider  are
\begin{enumerate}
\item[(1)]
The {\em Size ensemble} $\Size (n)$, the set  of all partitions having a fixed size $|\lambda| = n$.
\item[(2)] 
The {\em Perimeter ensemble} $\Per(n)$, the set of all partitions having a
fixed perimeter \\
$\per(\lambda)=n$.
\item[(3)] 
The {\em Max-part ensemble}, $\Tot(n)$, the set of all partitions having 
largest part $\lambda_1= n$. 
\end{enumerate}

These three ensembles each give a set partition of the collection $\sP$ of all partitions.
We have 
$$\sP= \bigcup_{n \ge  0} \Size(n)=\bigcup_{n \ge 0} \Per(n)=\bigcup_{n \ge 0} \Tot(n),$$ 
where each union is disjoint.

The ensembles $\Size(n)$ and $\Per(n)$ are finite sets for each $n$, while $\Tot(n)$ is an infinite set for each $n \ge 1$.
The cardinalities of $\Size(n)$ and $\Per(n)$ are known.
\begin{enumerate}
\item[(a)]
For $\Size(n)$ Hardy and Ramanujan  \cite{HardyR:1918} showed
 \begin{equation}\label{eqn:size-growth}
 \#|\Size(n)| = p(n) \sim \frac{1}{4n \sqrt{3}} \exp \left(  \pi \sqrt{\frac{2n}{3}}  \right)  
 \end{equation}
as $n \to \infty$.  
\item[(b)]
For $\Per(n)$ one has
\begin{equation}\label{eqn:per-growth}
\#|\Per(n)|= 2^{n-1},
\end{equation} 
for all $n \ge 1$. It is proved in  \cite{FuTang:18}, Corollary 2.4, 
where it is formulated  in terms of the largest hook statistic $\Gamma(\lambda)$ in their paper.
%%%%%%%%%%%%%%%%%%%%%%%%%%%%%%%%%%%%%%%%% 
%COMMENT. For each $n \ge 1$ there exists a partition $\lambda$ with $\per(\lambda)=n.$
%%%%%%%%%%%%%%%%%%%%%%%%%%%%%%%%%%%%%%%%%
\end{enumerate}

For the norm statistic the $\Tot$ ensemble gives an infinite answer; we must exclude partitions
have parts of size $1$; see the Appendix.

%%%%%%%%%%%%%%%%
%
% Section 3
%
%%%%%%%%%%%%%%%%%%
 \section{ Results}\label{sec:2} 
\setcounter{equation}{0}

 %%%%%%%%%%%%%%%%%%%%%%%%%%
 %
% 3.1 (formerly 1.6) supernorm ensemble  partition statistics
%
%%%%%%%%%%%%%%%%%%%%%%%%%%
\subsection{Supernorm ensemble partition statistics}\label{sec:14a} 

This paper studies 
questions concerning the  the asymptotic behavior as $n \to \infty$ of  reciprocal supernorm partition statistics,
 attached to size, perimeter and max ensembles of partitions . For the size-ensemble,
\begin{equation}
\HSsize(n) := \sum_{\lambda:  |\lambda|=n} \frac{1}{\widehat{N}(\lambda)}.
 \end{equation}
 the perimeter-ensemble, 
 \begin{equation}
\HSper(n) := \sum_{ \lambda: \per(\lambda)=n} \frac{1}{\widehat{N}(\lambda)}.
 \end{equation}  
and the largest-part ensemble, 
\begin{equation}
\HStot(n):= \sum_{ \lambda : \lambda_1=n} \frac{1}{\widehat{N}(\lambda)}.
\end{equation}

The paper obtains rigorous results for  cumulative versions of these  statistics. These cumulative statistics are,
for the size ensemble, 
\begin{equation}
\HCsize(n) := \sum_{\lambda: |\lambda| \le n}  \frac{1}{\widehat{N}(\lambda)},
%%%%%%%%%%%%
%= \sum_{j=0}^n \HSsize(j)
%%%%%%%%%%%%%
\end{equation}
the perimeter ensemble, 
\begin{equation}
\HCper(n) := \sum_{\lambda: \per(\lambda) \le n} \frac{1}{\widehat{N}(\lambda)}
%%%%%%%%%%%%%%
%=\sum_{j=0}^n  \HSper (j).
%%%%%%%%%%%%%%
 \end{equation}  
and the largest part ensemble, 
\begin{equation}
\HCtot(n) := \sum_{ \lambda: \lambda_1 \le n} \frac{1}{\widehat{N}(\lambda)}.
%%%%%%%%%%%%%%
%=\sum_{j=0}^n  \HSper (j).
%%%%%%%%%%%%%%%
 \end{equation}  

We show that the asymptotic behavior as $n \to \infty$ of the  three  
cumulative quantities
$\HCsize(n)$, $\HCper(n)$ and $\HCtot(n)$ all have the {\em same} main term,
which is $e^{\gamma} \log n$, in which Euler's constant appears.
 The lower order terms of their asymptotics may differ.
Concerning the ensemble  quantities $\HSsize(n)$, $\HSper(n)$ and $\HStot(n)$, we formulate  conjectures in
Section \ref{subsec:23}. Together they lead to the conjecture made in that Section that
$\Ssize (n) \, \HSsize(n) \sim  1$ as $n \to \infty$.

  %%%%%%%%%%%%%%%%%%%%%%%%%
  %
 % Subsection 3.2 Results for supernorm statistics
 %
 %%%%%%%%%%%%%%%%%%%%%%%%%%
 \subsection{Results for  reciprocal supernorms  summed over cumulative ensembles} \label{subsec:22} 
 
 In section \ref{subsec:221} we present  asymptotics for  cumulative statistics for reciprocal supernorms
 on the Size, Perimeter and Max-part ensembles.  
 In Section \ref{subsec:23} we  present conjectures for  
 individual (non-cumulative) statistics over the Size and Perimeter ensembles.

 %%%%%%%%%%%%%%%%%%%%%%%%%
 % Subsubsection 3.2.1 Results for supernorm statistics
 %%%%%%%%%%%%%%%%%%%%%%%%%%
 \subsubsection{Inequalities  for supernorm statistics} \label{subsec:221}

 We have the following relations among the reciprocal supernorms  summed over cumulative
 ensembles of partitions.

 %%%%%%%%%%
 %
% Lemma 3.1
%
%%%%%%%%%%
\begin{lem}
\label{lem:supernorm-ineq}
\label{lem:relations-norm}
The  cumulative reciprocal supernorm  statistics for ensembles satisfy the inequalities 
\begin{equation}
\label{eq:supernorm-ineq}
\HCsize(n) \le \HCper(n) \le \HCtot(n).
\end{equation}
These inequalities are both strict for $n \ge 4$. 
\end{lem}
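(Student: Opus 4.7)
The plan is to establish the chain of pointwise set inclusions
\[
\{\lambda : |\lambda| \le n\} \;\subseteq\; \{\lambda : \per(\lambda) \le n\} \;\subseteq\; \{\lambda : \lambda_1 \le n\},
\]
from which the non-strict inequalities in \eqref{eq:supernorm-ineq} follow immediately by positivity of each summand $1/\widehat{N}(\lambda)$. Both inclusions rest on elementary bounds between the three partition statistics, applied partition-by-partition.

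First I would verify $\per(\lambda) \le |\lambda|$ for every nonempty $\lambda$. Writing $\per(\lambda) = \lambda_1 + r(\lambda) - 1$ and using $\lambda_i \ge 1$ for $2 \le i \le r(\lambda)$, one obtains
\[
|\lambda| \;=\; \lambda_1 + \sum_{i=2}^{r(\lambda)} \lambda_i \;\ge\; \lambda_1 + (r(\lambda)-1) \;=\; \per(\lambda).
\]
Separately, the empty partition satisfies $\per(\emptyset) = 1 \le n$ whenever $n \ge 1$, so it lies in both cumulative ensembles and the first inclusion is complete. The second inclusion is easier: for nonempty $\lambda$, $\lambda_1 \le \lambda_1 + r(\lambda) - 1 = \per(\lambda)$, and for $\emptyset$ one has $\lambda_1 = 0 \le n$.

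For strict inequality at $n \ge 4$ I would exhibit, for each comparison, a single partition in the larger ensemble but not in the smaller. The partition $\lambda = (n-1, n-1)$ has $\per(\lambda) = (n-1)+2-1 = n$ and $|\lambda| = 2(n-1) > n$, contributing $1/p_{n-1}^2$ to the positive difference $\HCper(n) - \HCsize(n)$. The partition $\lambda = (n, 1)$ has $\lambda_1 = n \le n$ and $\per(\lambda) = n + 1 > n$, contributing $1/(2 p_n)$ to $\HCtot(n) - \HCper(n)$. Both examples are valid well beyond the threshold $n = 4$.

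No genuine obstacle is expected; the argument reduces to a termwise comparison of absolutely convergent positive series. The only subtle point is the empty partition, which must be checked by hand because $\per(\emptyset) = 1$ exceeds $|\emptyset| = 0$; this is also the reason a mild lower bound on $n$ appears.
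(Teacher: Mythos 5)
Your proof takes essentially the same approach as the paper's: establish the set inclusions $\{\lambda : |\lambda| \le n\} \subseteq \{\lambda : \per(\lambda) \le n\} \subseteq \{\lambda : \lambda_1 \le n\}$ from the pointwise inequalities $\lambda_1 \le \per(\lambda) \le |\lambda|$ and conclude by positivity of the summands $1/\widehat{N}(\lambda)$. You are somewhat more careful than the paper with the empty-partition edge case (where $\per(\emptyset)=1 > 0 = |\emptyset|$, so the pointwise inequality actually fails and one needs $n\ge 1$) and with supplying explicit witnesses $(n-1,n-1)$ and $(n,1)$ for strictness, but the underlying idea is identical.
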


These inequalities follow from set inclusions among the terms in
the three sums. The set inclusions underlying Lemma \ref{lem:supernorm-ineq}
facilitate proving results for cumulative ensembles.
We prove a result first for the Max-part cumulative ensemble, then obtain results
for other ensembles by bounding above the contributions of the supernorms
from partitions excluded from the smaller cumulative ensembles.  

In the next three subsections we give asymptotic results which together  imply 
\begin{equation}
\HCsize(n) \sim \HCper(n) \sim \HCtot(n) \sim e^{\gamma} \log n 
\end{equation}
as $n \to \infty$. We start with $\HCtot(n)$, then treat  $\HCper(n)$ and $\HCsize(n)$.

 %%%%%%%%%%%%%%%%%%%%%%%%%%%%%%%%
 %
 %% 3.2.2 (formerly 3.2.4) Asymp. Cumulative max--binned reciprocal supernorms: MAX
 %
 %%%%%%%%%%%%%%%%%%%%%%%%%%%%%%%%%
 \subsubsection{Asymptotics for cumulative reciprocal supernorms for the Max-part ensemble} \label{sec:224}

%%%%%%%%%%
% Theorem 3.2 
%%%%%%%%%%
 \begin{thm}[Cumulative reciprocal supernorms: Max-part ensemble] 
 \label{thm:27} 
   The reciprocal supernorm statistic summed over the infinite set of partitions having  maximum part-size $\le n$ satisfies
 \begin{equation}
\HCtot (n)=e^{\gamma}(\log n+\log\log n)+\mathcal{O}\left(\frac{\log\log n}{\log n}\right),
\end{equation}
 for $n \ge 10$.
 \end{thm}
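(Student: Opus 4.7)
The plan is to recognize $\HCtot(n)$ as a partial Euler product over the first $n$ primes and then apply Mertens' third theorem together with a standard asymptotic expansion of the $n$-th prime.

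First I would rewrite the sum using part-multiplicity notation. A partition with $\lambda_1 \le n$ is uniquely specified by a multiplicity vector $(m_1, m_2, \ldots, m_n) \in \NN^n$, with supernorm $\widehat{N}(\lambda) = \prod_{j=1}^n p_j^{m_j}$. Interchanging the sum and the product decouples the coordinates and yields the Euler-type identity
\begin{equation*}
\HCtot(n) \;=\; \sum_{m_1,\ldots,m_n \ge 0}\, \prod_{j=1}^n p_j^{-m_j} \;=\; \prod_{j=1}^n \sum_{m \ge 0} p_j^{-m} \;=\; \prod_{j=1}^n \left(1 - \frac{1}{p_j}\right)^{-1} \;=\; \prod_{p \le p_n} \left(1 - \frac{1}{p}\right)^{-1}.
\end{equation*}
In particular, the fact that $\HCtot(n)$ is finite (despite being a sum over infinitely many partitions) is already built into this factorization.

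Next I would appeal to Mertens' third theorem (in a form available from the preliminary estimates of Section \ref{sec:4}) in the version
\begin{equation*}
\prod_{p \le x} \left(1 - \frac{1}{p}\right)^{-1} \;=\; e^{\gamma}\log x \;+\; O\!\left(\frac{1}{\log x}\right),
\end{equation*}
which follows from the prime number theorem with a classical error term. Substituting $x = p_n$ gives $\HCtot(n) = e^{\gamma}\log p_n + O(1/\log n)$. Finally, using the Rosser--Schoenfeld estimate $p_n = n\log n + n\log\log n + O(n)$ and taking logarithms, one has
\begin{equation*}
\log p_n \;=\; \log n + \log\log n + O\!\left(\frac{\log\log n}{\log n}\right),
\end{equation*}
so combining the two inputs produces the desired asymptotic
\begin{equation*}
\HCtot(n) \;=\; e^{\gamma}(\log n + \log\log n) + O\!\left(\frac{\log\log n}{\log n}\right).
\end{equation*}

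The mechanics of this argument are essentially routine: the Euler-product identity is exact, and both Mertens' theorem and the Rosser--Schoenfeld expansion are standard. I expect the only real care to be in bookkeeping the error terms and verifying that the $O(1/\log n)$ coming from Mertens is indeed dominated by the $O(\log\log n/\log n)$ coming from the expansion of $\log p_n$, so that the stated error term captures both contributions. The main term $e^{\gamma}(\log n + \log\log n)$ is then dictated entirely by the decomposition $\log p_n \approx \log n + \log\log n$.
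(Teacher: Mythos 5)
Your proposal is correct and follows essentially the same route as the paper: the Euler-product identity $\HCtot(n)=\prod_{j=1}^n(1-1/p_j)^{-1}$, Mertens' third theorem applied at $x=p_n$, and the asymptotic $\log p_n=\log n+\log\log n+O(\log\log n/\log n)$ (the paper uses Axler's explicit bounds rather than Rosser--Schoenfeld, and Dusart's quantitative form of Mertens with relative error $O^{*}(1/\log^3 x)$, so as to get explicit constants before relaxing to an $O$-estimate on $n\ge 10$). One minor caveat in your bookkeeping: the unadorned Mertens theorem only gives $\prod_{p\le x}(1-1/p)^{-1}=e^{\gamma}\log x+O(1)$, so to claim the additive error $O(1/\log x)$ you do need (as you flag) a PNT-strength relative error of $O(1/\log^2 x)$ or better; since $O(1/\log n)\subseteq O(\log\log n/\log n)$, the final error is dominated by the $\log p_n$ expansion either way, exactly as the paper finds.
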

 
 Figure \ref{fig:C-hat-max} plots the statistic for $n \le 20$; the lower green line plots
the main term $f(x) = e^{\gamma}(\log x+\log\log x)$.

  %%%%%%%%%%%%%%%%%%%%%%%%%%%%%%%%
 % Figure 5 Cumulative reciprocal supernorm:max-part ensemble
 %%%%%%%%%%%%%%%%%%%%%%%%%%%%%%%%
   \begin{figure}[h!]
    \centering
    \includegraphics{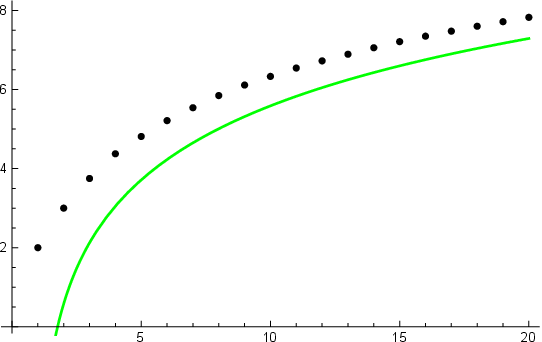}
    \caption{Plot of $\HCtot(n)$ in dots, $e^{\gamma}(\log x+\log\log x)$ in green.}
    \label{fig:C-hat-max}
\end{figure}
 
 This result  is established using the fact  that $\HCtot(n)$ is explicitly given as 
 the reciprocal of a  Mertens product. For definitions consult  also Lemma \ref{lem:mertens-prod}.

 %%%%%%%%%%%%%%%%%%%%%%%%%%%%%%%%
 %
 %%  3.2.3 Asymp. Cumulative perimeter-binned reciprocal supernorms: PERIMETER
 %
 %%%%%%%%%%%%%%%%%%%%%%%%%%%%%%%%%
 \subsubsection{Asymptotics for cumulative  reciprocal supernorms for the Perimeter ensemble} \label{subsubsec:223}

%%%%%%%%%%
% Theorem  3.3 (2.4)
%%%%%%%%%%
 \begin{thm} [Cumulative reciprocal supernorms: Perimeter ensemble]
 \label{thm:26} 
 The reciprocal supernorm statistic summed over partitions of perimeter  $n$ satisfies 
  \begin{align}
 \label{eq:cumulative-supernorm-recip}
 \HCper(n) = e^{\gamma} (\log n + \log\log n )+ \mathcal{O}( 1),
 \end{align}
for $n \ge 10$.
 \end{thm}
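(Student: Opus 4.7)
The plan is to show that $\HCper(n)$ agrees with $\HCtot(n)$ up to a bounded error, and then invoke Theorem~\ref{thm:27}. For any nonempty partition $\per(\lambda) = \lambda_1 + r(\lambda) - 1 \geq \lambda_1$, so by Lemma~\ref{lem:supernorm-ineq} the partitions counted in $\HCper(n)$ form a subset of those counted in $\HCtot(n)$, and
\[
D(n) := \HCtot(n) - \HCper(n) = \sum_{\substack{\lambda:\, \lambda_1 \leq n\\ \per(\lambda) > n}} \frac{1}{\widehat{N}(\lambda)} \geq 0.
\]
Since Theorem~\ref{thm:27} already gives $\HCtot(n) = e^{\gamma}(\log n + \log\log n) + \mathcal{O}(\log\log n/\log n)$, the theorem reduces to proving $D(n) = \mathcal{O}(1)$.

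To estimate $D(n)$, I decompose by the value of the largest part $\lambda_1 = k \in \{1,\ldots,n\}$. The constraint $\per(\lambda) > n$ becomes $r(\lambda) \geq n - k + 2$. Writing $\lambda = (k,\mu)$ where $\mu$ is a partition (possibly empty) with $\mu_1 \leq k$ and $\ell(\mu) = r(\lambda) - 1$, one obtains
\[
D(n) = \sum_{k=1}^{n} \frac{1}{p_k} \sum_{j \geq n-k+1} B_k(j), \qquad B_k(j) := \sum_{\substack{\mu:\, \mu_1 \leq k\\ \ell(\mu) = j}} \frac{1}{\widehat{N}(\mu)}.
\]
The generating function in $j$ factors as a finite Euler product,
\[
\sum_{j \geq 0} B_k(j)\, x^j = \prod_{i=1}^{k} \frac{1}{1 - x/p_i}, \qquad 0 \leq x < p_1 = 2,
\]
and since $B_k(j) \geq 0$, for any $x \in (1,2)$ we have the tail bound $\sum_{j \geq N} B_k(j) \leq x^{-N} \prod_{i=1}^{k} (1-x/p_i)^{-1}$.

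I will choose $x = 3/2$. Expanding $-\log(1-3/(2p_i))$ and applying Mertens' second theorem, $\sum_{i \leq k} 1/p_i = \log\log k + \mathcal{O}(1)$, one finds
\[
\log \prod_{i=1}^{k} \frac{1}{1 - 3/(2p_i)} = \tfrac{3}{2}\log\log k + \mathcal{O}(1),
\]
whence the product is $\mathcal{O}((\log(k+2))^{3/2})$. Substituting this and using $p_k \geq k$,
\[
D(n) = \mathcal{O}\!\left(\sum_{k=1}^{n} \frac{(\log(k+2))^{3/2}}{k}\left(\tfrac{2}{3}\right)^{n-k+1}\right).
\]
The geometric weight concentrates the sum near $k = n$: splitting at $k = \lfloor n/2\rfloor$, the range $k \leq n/2$ contributes an exponentially small term of size $\mathcal{O}((2/3)^{n/2} (\log n)^{5/2})$, while the range $k > n/2$ contributes $\mathcal{O}((\log n)^{3/2}/n)$ since the geometric tail sums to at most $3$. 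Hence $D(n) = \mathcal{O}((\log n)^{3/2}/n) = o(1)$, and the theorem follows from Theorem~\ref{thm:27}.

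The main (minor) technical obstacle is the choice of the parameter $x$: one needs $x > 1$ for the tail bound to give any decay, but $x < p_1 = 2$ to keep the Mertens-type product convergent. Any fixed $x \in (1,2)$ works; the resulting $\mathrm{poly}(\log k)$ factor in the product is then easily dominated by the geometric decay $x^{-(n-k+1)}$, so the remaining estimates are routine. Note that this approach in fact yields the stronger error term $\mathcal{O}(\log\log n/\log n)$ inherited from Theorem~\ref{thm:27}.
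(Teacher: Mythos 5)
Your proof is correct, and it takes a genuinely different route from the paper for the lower bound. Both arguments get the upper bound immediately from $\HCper(n)\le\HCtot(n)$ and Theorem~\ref{thm:27}. For the lower bound, the paper constructs an explicit sub-ensemble $\sY(m)$ of partitions with parts at most $m$ and multiplicity of part $j$ capped at roughly $2\log m/\log p_j$, checks that such partitions have perimeter at most $7m$, lower-bounds the associated truncated Euler product, and then rescales $m=n/7$ --- paying an additive cost of $e^{\gamma}\log 7\approx 3.47$ in the rescaling, which is why their explicit version (Theorem~\ref{thm:52}) ends with an $\mathcal{O}^*(4)$ remainder. You instead bound the excess $D(n)=\HCtot(n)-\HCper(n)$ directly: decomposing by the largest part $k$, expressing the tail of the length distribution as a generating-function coefficient, and applying a Chernoff-style bound $\sum_{j\ge N}B_k(j)\le x^{-N}\prod_{i\le k}(1-x/p_i)^{-1}$ at $x=3/2$, where the Mertens-type estimate for $\sum_i -\log(1-3/(2p_i))$ gives a polylog factor easily killed by the geometric decay. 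This yields $D(n)=\mathcal{O}((\log n)^{3/2}/n)=o(1)$, which is strictly stronger than the paper's bound: you inherit the full $\mathcal{O}(\log\log n/\log n)$ error term from Theorem~\ref{thm:27} for $\HCper(n)$ as well, and in particular your argument shows that the $\mathcal{O}(1)$ term in \eqref{eq:cumulative-supernorm-recip} tends to $0$, contrary to the paper's speculation (after Theorem~\ref{thm:26}) that it approaches a negative constant. The tradeoff is that the paper's construction gives fully explicit constants on an explicit domain, whereas yours is stated with unspecified $\mathcal{O}$-constants, though these could be made explicit by using the effective versions of the Mertens estimates already in Section~\ref{sec:4}.
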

 
 We will establish a result with a remainder term with explicit constants, 
 on a smaller domain, 
 in Theorem \ref{thm:52}. We think that the $\mathcal{O}(1)$ term may have
 an explicit limiting constant as $x \to \infty$, which will be negative.

 %%%%%%%%%%%%%%%%%%%%%%%%%%
 % FIGURE 4 Cumulative reciprocal supernorm: perimeter ensemble
 %%%%%%%%%%%%%%%%%%%%%%%%%%%

  Figure \ref{fig:C-hat-per} plots $\HCper (n)$ against the bound $e^{\gamma} (\log n+ \log\log n)$ in red
  \eqref{eq:cumulative-supernorm-recip}, as well as against the less precise estimate $e^{\gamma}\log n$, in parallel with Figure \ref{fig:C-hat-size}.
     \begin{figure}[h!]
    \centering
    \includegraphics{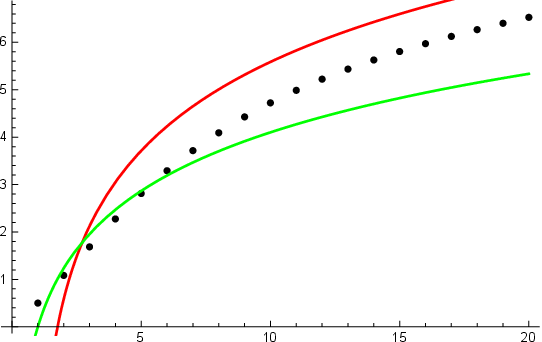}
    \caption{Plot of $\hat{C}_{\text{per}}(n)$ in dots, $e^{\gamma}(\log x+\log\log x)$ in red, $e^{\gamma}\log x$ in red.}
    \label{fig:C-hat-per}
\end{figure}

   %%%%%%%%%%%%%%%%%%%%%%%%%%%%%%%%
  %
 %% Subsubsec 3.2.4 (formerly 3.2.2)  Asymp. Cumulative size-binned reciprocal supernorms: SIZE
 %
 %%%%%%%%%%%%%%%%%%%%%%%%%%%%%%%%%
 \subsubsection{Asymptotics for cumulative  reciprocal supernorms for the Size ensemble} \label{subsubsec:222}

We obtain asymptotic estimates for the cumulative statistics for supernorm statistics. 
%%%%%%%%%%
% Theorem 3.4  
%%%%%%%%%%
 \begin{thm}[Cumulative reciprocal supernorms: Size ensemble]
 \label{thm:25}
 The reciprocal supernorm statistic summed over partitions of size $n$ satisfies 
 \begin{equation} \label{eqn:cum-size-lower-bound}
 \HCsize(n)  \ge   e^{\gamma}\log n +  \mathcal{O}\left( 1 \right).
 \end{equation}
 for $n \ge 10$. 
  Consequently
 \begin{equation}\label{eqn:thm34}
 \HCsize(n)  =  e^{\gamma}\log n +  \mathcal{O}\left( \log\log n  \right).
 \end{equation}
 holds for $n \ge 10$. 
 \end{thm}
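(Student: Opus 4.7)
The plan is to derive Theorem~\ref{thm:25} by comparing $\HCsize(n)$ to $\HCtot(n)$, exploiting the inclusion chain of Lemma~\ref{lem:supernorm-ineq} together with the asymptotic expansion of Theorem~\ref{thm:27}, as foreshadowed in the discussion following Lemma~\ref{lem:supernorm-ineq}. The upper bound is immediate: Lemma~\ref{lem:supernorm-ineq} gives $\HCsize(n) \le \HCtot(n)$, and substituting the max-part asymptotic yields $\HCsize(n) \le e^{\gamma}(\log n + \log\log n) + \mathcal{O}(\log\log n / \log n)$. The real content is the matching lower bound, which reduces to controlling
\[
T(n) \;:=\; \HCtot(n) - \HCsize(n) \;=\; \sum_{\lambda:\, \lambda_1 \le n,\, |\lambda| > n} \frac{1}{\widehat{N}(\lambda)},
\]
the supernorm-weighted mass of partitions whose largest part fits but whose total size exceeds the budget $n$.

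To bound $T(n)$ I plan to use a Markov-type estimate combined with a generating-function identity. Since $|\lambda|/n \ge 1$ on the range of summation,
\[
T(n) \;\le\; \frac{1}{n} \sum_{\lambda:\, \lambda_1 \le n} \frac{|\lambda|}{\widehat{N}(\lambda)}.
\]
The right-hand sum is evaluated via the Mertens-style generating function
\[
G(z) \;=\; \sum_{\lambda:\, \lambda_1 \le n} \frac{z^{|\lambda|}}{\widehat{N}(\lambda)} \;=\; \prod_{k=1}^n \bigl(1 - z^k/p_k\bigr)^{-1},
\]
which is analytic at $z = 1$ with $G(1) = \HCtot(n)$. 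Logarithmic differentiation at $z = 1$ gives the identity
\[
\sum_{\lambda:\, \lambda_1 \le n} \frac{|\lambda|}{\widehat{N}(\lambda)} \;=\; G'(1) \;=\; \HCtot(n) \sum_{k=1}^n \frac{k}{p_k - 1}.
\]

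Using the prime-counting estimates of Section~\ref{sec:4}, in particular $p_k \gg k \log k$ for $k$ sufficiently large, one has $k/(p_k - 1) = \mathcal{O}(1/\log k)$ for $k \ge 2$, and a standard integral-test argument yields $\sum_{k=1}^n k/(p_k - 1) = \mathcal{O}(n/\log n)$. Combined with $\HCtot(n) = \mathcal{O}(\log n)$ from Theorem~\ref{thm:27}, this produces $T(n) \le G'(1)/n = \mathcal{O}(1)$. Therefore
\[
\HCsize(n) \;\ge\; \HCtot(n) - T(n) \;\ge\; e^{\gamma}\log n + e^{\gamma}\log\log n - \mathcal{O}(1) \;\ge\; e^{\gamma}\log n + \mathcal{O}(1),
\]
which is \eqref{eqn:cum-size-lower-bound}; the two-sided estimate \eqref{eqn:thm34} follows at once by combining with the upper bound. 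The main obstacle is simply bookkeeping: the sum $\sum_{k=1}^n k/(p_k - 1)$ must be handled with a bit of care for small $k$, where $1/\log k$ is not well-behaved, but those few terms contribute only a bounded amount and the bulk estimate is routine from the prime number theorem. No deeper analytic input is needed.
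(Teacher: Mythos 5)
Your proof is correct, and it takes a genuinely different and more efficient route than the paper's. The paper proves the lower bound by constructing a concrete subfamily $\sX(n^2)\subset\Size(\le n^2)$ built from small parts with multiplicities capped by $\alpha_j = (2\log n-4\log 2)/\log p_j$ together with at most $k=\floor{\log n}$ large parts, lower-bounding the reciprocal-supernorm mass of that family via Lemma~\ref{lem:reciprocal-prod-prime} and a Poisson-tail-type estimate, then rescaling $n^2\to n$; this yields $\HCsize(n)\ge e^{\gamma}\log n-\mathcal{O}(1)$, and the authors explicitly flag in the remark after Theorem~\ref{thm:54} that their inequality \eqref{eqn:S-large-loss-bound} loses a factor that prevents them from capturing the $\log\log n$ term. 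Your approach instead controls the excess $T(n)=\HCtot(n)-\HCsize(n)$ directly by a Markov-type bound and the logarithmic derivative of the finite Euler product $G(z)=\prod_{k\le n}(1-z^k/p_k)^{-1}$, which is analytic in $|z|<\min_{k\le n}p_k^{1/k}>1$ so that the identity $G'(1)=\HCtot(n)\sum_{k\le n}k/(p_k-1)$ is legitimate. Combined with $\sum_{k\le n}k/(p_k-1)=\mathcal{O}(n/\log n)$ from $p_k\ge k\log k$ and $\HCtot(n)=\mathcal{O}(\log n)$, this gives $T(n)=\mathcal{O}(1)$.

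Two remarks worth making. First, your result is strictly stronger than Theorem~\ref{thm:25} as stated: $T(n)=\mathcal{O}(1)$ yields $\HCsize(n)=e^{\gamma}(\log n+\log\log n)+\mathcal{O}(1)$, precisely the bounded-difference phenomenon the paper observes numerically in Figure~\ref{fig:C-hat-size-loglog} and explicitly leaves open, and it shows all three cumulative statistics share the main term $e^{\gamma}(\log n+\log\log n)$ up to $\mathcal{O}(1)$. Second, your derivation also makes \eqref{eqn:thm34} follow with the sharper error $\mathcal{O}(\log\log n/\log n)$ once the main term is written with the $\log\log n$ included. The only bookkeeping to tidy up is the small-$k$ contribution to $\sum k/(p_k-1)$, which as you note is bounded, and a brief justification that termwise differentiation of the product at $z=1$ is valid for each fixed $n$, which is immediate from the radius of convergence being strictly larger than $1$.
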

 
 Since $\HCtot(n) \ge \HCsize(n)$,  the upper bound in \eqref{eqn:thm34} 
 follows from  the upper bound
for $\HCtot(n)= e^{\gamma}(\log n+ \log\log n) + O(\frac{\log\log n}{\log n})$ given in Theorem \ref{thm:26}.
 We prove the lower bound  \eqref{eqn:cum-size-lower-bound} 
  in  Theorem \ref{thm:54}.  
  
   Figure \ref{fig:C-hat-size} plots $\HCsize(n)$ against the bound $e^{\gamma} (\log n)$ and a potentially more precise estimate, $e^{\gamma}(\log n+\log\log n)$, for $1 \le n \le 70$.
 %%%%%%%%%%%%%%%%%%%%%%%%%%
 % FIGURE 3 Cumulative supernorm: size ensemble
 %%%%%%%%%%%%%%%%%%%%%%%%%%%

  \begin{figure}[h!]
    \centering
    \includegraphics{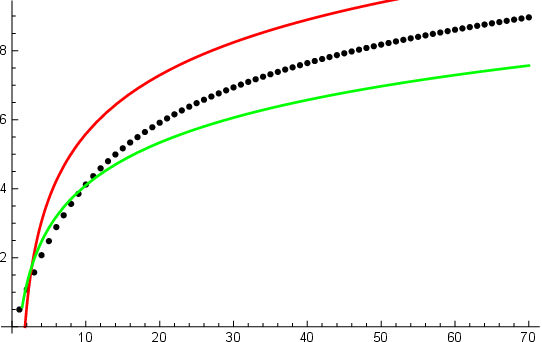}
    \caption{Plot of $\widehat{C}_{\text{size}}(n)$ in dots, $e^{\gamma}\log x$ in green, $e^{\gamma}(\log x+\log\log x)$ in red.}
    \label{fig:C-hat-size}
\end{figure}

Figure \ref{fig:C-hat-size}  suggests the possibility that  there is a lower order term
present in the asymptotics \eqref{eqn:thm34}, of order $\log\log n$.

    Figure \ref{fig:C-hat-size-loglog} plots the difference $\widehat{C}_{\text{size}}(n)-e^{\gamma}(\log n+\log\log n)$ for $n\le70$; this plot suggests the difference is bounded. If the difference is bounded, then the three statistics $\HCsize(n)$, $\HCper(n)$, $\HCtot(n)$ would  share the  main term $e^{\gamma}(\log n+\log\log n)$
 and  differ by  $O(1)$.
   
     %%%%%%%%%%%%%%%%%%%%%%%%%%%%%
 % FIGURE 6 Plot of C-hat-size against loglog n extra term
 %%%%%%%%%%%%%%%%%%%%%%%%%%%%%

  \begin{figure}[h!]
    \centering
\includegraphics{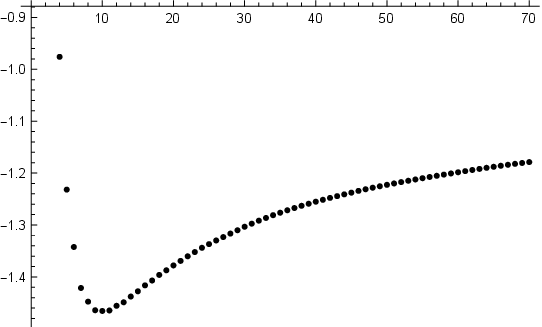}
    \caption{Plot of $\widehat{C}_{\text{size}}(n)-e^{\gamma}(\log n+\log\log n)$ for $n\le70$.}
    \label{fig:C-hat-size-loglog}
\end{figure}

 %%%%%%%%%%%%%%%%%%%%%%%%%%%%%%%%
 %
 %%  Subsection 3.3 Individual Ensembles; Conjectures
 %
 %%%%%%%%%%%%%%%%%%%%%%%%%%%%%%%%%
 \subsection{Conjectures for reciprocal supernorms summed over  individual ensembles} \label{subsec:23}

 The theorems above show that  Euler's constant appears in the asymptotics of the cumulative
 sums of reciprocal supernorms, for the Size, Perimeter and Max-part ensembles,
 all of which have the same  main term $e^{\gamma} \log n$.
One can ask whether Euler's constant will also appear 
in asymptotics for the non-cumulative statistics.

The non-cumulative statistics  are given as  forward differences of their cumulative versions:
\begin{align}
   \HSsize(n)&=\HCsize(n)-\HCsize(n-1),\\
    \HSper(n)&=\HCper(n)-\HCper(n-1)\\
    \HStot(n) &= \HCtot(n)- \HCtot(n-1).
\end{align}
These non-cumulative statistics $\HSsize(n)$, $\HSper(n)$
are  however less well-behaved than the cumulative statistics,
in that  the underlying  set inclusions proving Lemma  \ref{lem:supernorm-ineq} 
 do not all hold for individual (non-cumulative) ensembles. 
 \begin{enumerate}
    \item[(i)]    
         $\Size(n)$ is not a subset of  $\Per(n)$,  for all $n \ge 4$,
because $\size(\lambda) > \per(\lambda)$ may occur. 
\item[(ii)]
 $\Per(n)$ is not a subset of  $\Tot(n)$ for all $n \ge 4$.
The set $\Tot(n)$  consists of all partitions with largest part $n$, while
the former contains some partitions with largest part smaller than $n$, for all $n \ge 4$.
\end{enumerate} 
The analogue of the leftmost inequality of \eqref{eq:supernorm-ineq}  fails at $n=14$, with  
$$
\HSsize(14) \approx 0.19381 > \HSper(14)\approx 0.19288.
$$
We do not know whether an analogue of the rightmost inequality in Lemma \ref{lem:supernorm-ineq} 
holds for non-cumulative statistics.

We  still expect 
 that the asymptotics of   the non-cumulative statistics will behave like the differenced versions
 of the asymptotics for the cumulative versions; if so, they will retain
  Euler's constant in their asymptotics. 
 Since  $\log n - \log (n-1) \sim \frac{1}{n}$ as $n \to \infty$, 
 we  formulate   two conjectures for the Size and Perimeter ensemble statistics for individual $n$.

%%%%%%%%%%%%
% Conj. 3.6
%%%%%%%%%% 
 \begin{conj}\label{conj:supernorm-size}
 There holds 
 \begin{equation}
\HSsize(n) \sim \frac{ e^{\gamma}}{n},
 \end{equation}
 as $n \to \infty$.
 \end{conj}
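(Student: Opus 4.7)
The plan is to attack the conjecture via singularity analysis of the generating function
$$F(x) = \sum_{n \ge 0} \HSsize(n)\, x^n = \prod_{k \ge 1} \frac{1}{1 - x^k/p_k}.$$
Taking logarithms splits $\log F(x) = S(x) + T(x)$ with
$$S(x) = \sum_{k \ge 1} \frac{x^k}{p_k}, \qquad T(x) = \sum_{k \ge 1} \sum_{m \ge 2} \frac{x^{km}}{m\, p_k^m}.$$
The tail $T$ is uniformly absolutely convergent on $|x| \le 1$, so $T(x) \to T(1) = \sum_p \sum_{m \ge 2} 1/(m p^m)$ as $x \to 1^-$. The identity $\log(1 - 1/p) + 1/p = -\sum_{m \ge 2} 1/(mp^m)$ then gives $T(1) = \gamma - M$, where $M$ is Mertens' constant.

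The central analytic step is to prove
$$S(x) = \log\log\frac{1}{1-x} + M + o(1) \qquad (x \to 1^-).$$
Starting from Mertens' second theorem in the form $\sum_{k \le N} 1/p_k = \log\log N + M + o(1)$ (valid since $p_k \sim k\log k$), Abel summation converts $S(x)$ into $(1-x)\sum_N A(N) x^N$, and a Laplace-transform calculation with $x = e^{-t}$, $t \to 0^+$, yields the displayed identity. Combining with the value of $T(1)$ gives $\log F(x) = \log\log(1/(1-x)) + \gamma + o(1)$, hence
$$F(x) \sim e^{\gamma} \log \frac{1}{1-x} \qquad (x \to 1^-).$$
This is compatible with, and gives an alternative route to, the cumulative estimate $\HCsize(n) \sim e^{\gamma}\log n$ already proved in Theorem 3.5.

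The hard part is translating this singular behavior into the pointwise coefficient asymptotic $[x^n] F(x) \sim e^{\gamma}/n$. A direct appeal to Flajolet--Odlyzko transfer theorems is blocked because $F$ has a singularity at each $x = p_k^{1/k}$, and these accumulate at $x=1$, so no $\Delta$-domain is available. A Karamata Tauberian theorem applied to the nonnegative coefficients only recovers the cumulative statement, not the pointwise one. To close the gap one needs an extra regularity input on $\HSsize(n)$: either a monotonicity or smoothness estimate, perhaps through the convolutional recurrence $n\,\HSsize(n) = \sum_{k=1}^n b_k \HSsize(n-k)$ derived from $xF'(x)/F(x)$ with $b_k = \sum_{dm = k} d/p_d^m$, or a sharpening of Theorem 3.5 to $\HCsize(n) = e^{\gamma}\log n + C + o(1)$ that can be telescoped against $e^{\gamma}(\log n - \log(n-1)) = e^{\gamma}/n + O(1/n^2)$. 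Either route demands a substantive improvement beyond the Mertens-type analysis that suffices for the cumulative case, and this is the obstruction that keeps the statement a conjecture.
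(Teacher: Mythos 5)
The statement you are addressing is a \emph{conjecture} in the paper; the authors explicitly do not prove it, and they only establish the cumulative result $\HCsize(n) = e^{\gamma}\log n + O(\log\log n)$. Your write-up correctly recognizes this and, rather than fabricating a proof, gives an honest and technically sound diagnosis of the obstruction, which is the right outcome here. The generating-function setup is correct: $F(x) = \prod_{k\ge 1}(1 - x^k/p_k)^{-1}$ is the ordinary generating function for $\HSsize(n)$, the split $\log F = S + T$ with $T$ absolutely convergent on the closed disk is natural, the evaluation $T(1) = \gamma - M$ follows from the identity $-\log(1-1/p) - 1/p = \sum_{m\ge 2} 1/(m p^m)$ together with Mertens' second and third theorems, and $S(x) = \log\log\frac{1}{1-x} + M + o(1)$ as $x\to 1^-$ comes out of Abel summation combined with the slow variation of $\log\log$. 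So $F(x) \sim e^{\gamma}\log\frac{1}{1-x}$ near $x=1$, an Abelian estimate consistent with the proven cumulative asymptotics. Your two points of real analysis are both correct: the poles of the factors at $x = p_k^{1/k}$ have infimum $1$ (since $(\log p_k)/k \to 0$), so no $\Delta$-domain exists and Flajolet--Odlyzko transfer is unavailable; and a Karamata Tauberian theorem on the nonnegative sequence $\HSsize(n)$ can only recover the Ces\`aro-type (cumulative) statement, not the pointwise one. The suggested escape routes, namely the convolution recurrence $n\,\HSsize(n) = \sum_{k=1}^n b_k\,\HSsize(n-k)$ with $b_k = \sum_{dm=k} d/p_d^m$, and a sharpening of the cumulative estimate to $e^{\gamma}\log n + C + o(1)$ that telescopes against $e^{\gamma}/n$, are the natural candidates, though neither is carried out and each would require a new regularity input.

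For comparison: the paper itself does not work with generating functions at all for the supernorm statistics. Its proofs of the cumulative results proceed via set inclusions among the Size, Perimeter, and Max-part ensembles, explicit lower-bound constructions (the auxiliary sets $\sX(n^2)$, $\sY(n)$), and direct quantitative Mertens-product estimates with explicit error constants drawn from Dusart and Axler. Your generating-function route is conceptually cleaner about where Euler's constant enters ($T(1) = \gamma - M$ supplies $\gamma$, while $S$ carries $\log\log$ and $M$, and $M$ cancels), whereas the paper's method yields explicit error bounds. Both approaches hit exactly the same wall at the pointwise conjecture, and you have correctly identified what that wall is and why.
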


 Figures \ref{fig: W-hat-size} plots the statistics $\HSsize(n)$ for sizes  $1 \le n \le 70$.
 
  %%%%%%%%%%%%%%%%%
 % FIGURE 7 W-hat-size: Size supernorms n 
 %%%%%%%%%%%%%%%%%

 \begin{figure}[h!]
    \centering
    \includegraphics{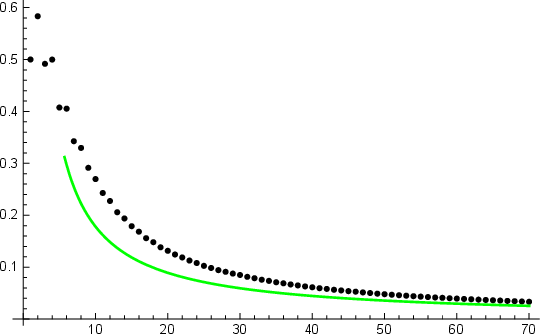}
    \caption{Plot of $\widehat{W}_{\text{size}}(n)$ in dots, $e^{\gamma}/x$ in green.}
    \label{fig: W-hat-size}
\end{figure}

%%%%%%%%%%%%
% Conj 3.7
%%%%%%%%%%
\begin{conj}\label{conj:supernorm-permieter} 
There holds
 \begin{equation}
 \HSper(n) \sim \frac{ e^{\gamma}}{n},
 \end{equation} 
 as $n \to \infty$.
 \end{conj}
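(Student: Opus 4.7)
The plan is to combine a decomposition of $\Per(n)$ by the largest part with a singular analysis of the resulting generating function, followed by a Tauberian step. Every $\lambda \in \Per(n)$ with $\lambda_1 = k$ has exactly $n-k+1$ parts, so it is uniquely determined by the tail partition $\mu := (\lambda_2, \ldots, \lambda_{n-k+1})$, which satisfies $\ell(\mu) = n-k$ and $\mu_1 \le k$. Introducing
\[
T_k(m) \;:=\; \sum_{\substack{\ell(\mu) = m \\ \mu_1 \le k}} \frac{1}{\widehat{N}(\mu)} \;=\; [z^m] \prod_{i=1}^{k} \frac{p_i}{p_i - z},
\]
the decomposition yields the exact identity
\[
\HSper(n) \;=\; \sum_{k=1}^{n} \frac{T_k(n-k)}{p_k} \;=\; [z^n]\, G(z), \qquad G(z) \;:=\; \sum_{k \ge 1} \frac{z^k}{p_k} \prod_{i=1}^{k} \frac{p_i}{p_i - z}.
\]

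The next step is to analyze $G(z)$ as $z \to 1^-$. Expanding the logarithm of the Mertens-type product gives
\[
\log \prod_{i=1}^{k} \frac{p_i}{p_i - z} \;=\; z \sum_{i=1}^{k} \frac{1}{p_i} \;+\; \sum_{j \ge 2} \frac{z^j}{j} \sum_{i=1}^{k} \frac{1}{p_i^j},
\]
and invoking Mertens' theorems (the estimate $\sum_{i \le k} 1/p_i = \log\log k + M + O(1/\log k)$, together with absolute convergence of the prime zeta values $\sum_{p} 1/p^j$ for $j \ge 2$) should yield
\[
\prod_{i=1}^{k} \frac{p_i}{p_i - z} \;=\; \Phi(z)\, (\log k)^{z} \bigl(1 + o_k(1)\bigr),
\]
uniformly on compact subsets of $(0, 1]$, for an analytic function $\Phi$ with $\Phi(1) = e^{\gamma}$. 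Substituting this and using $p_k \sim k \log k$, one expects
\[
G(z) \;\sim\; \Phi(z) \sum_{k \ge 2} \frac{z^k}{k} (\log k)^{z-1} \qquad (z \to 1^-),
\]
and splitting the latter sum at the natural scale $k \sim 1/(1-z)$ should yield $G(z) = -e^{\gamma} \log(1-z) + O(1)$ as $z \to 1^-$.

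A Karamata-type Tauberian theorem applied to $G$ would immediately recover the cumulative statement $\HCper(n) \sim e^{\gamma} \log n$ consistent with Theorem~\ref{thm:26}, but deducing the individual asymptotic $\HSper(n) \sim e^{\gamma}/n$ is the main obstacle: the inference requires extra regularity on the nonnegative sequence $\bigl(n \, \HSper(n)\bigr)_{n \ge 1}$ --- such as slow variation or a one-sided monotonicity condition --- that does not follow from the singular behaviour of $G$ alone. I see two plausible routes. The first is to perform a refined Hankel-contour or saddle-point analysis of $G$, obtaining a stronger singular expansion $G(z) = -e^{\gamma} \log(1-z) + c + o(1)$ as $z \to 1^-$, from which a Delange-type transfer theorem would give the individual asymptotic directly. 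The second, more elementary, route is to sharpen Theorem~\ref{thm:26} to $\HCper(n) = e^{\gamma}(\log n + \log\log n) + c + o(1)$ for an explicit constant $c$ and then difference; the conjecture then reduces to showing that the $o(1)$ error term itself decays at rate $o(1/n)$, which is a smoothness question that appears to be the crux of the difficulty in either approach.
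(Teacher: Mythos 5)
The statement you are attempting is Conjecture~\ref{conj:supernorm-permieter}; the paper itself offers no proof and leaves it open, so there is no ``paper's proof'' to compare against. Your writeup is honest about this: it is a roadmap rather than a proof, and you yourself pinpoint the place where it stops being one.

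The positive part of your proposal is sound and goes beyond anything in the paper. Fixing the largest part $\lambda_1=k$ in $\Per(n)$ forces $r=n-k+1$ parts, so the tail $\mu=(\lambda_2,\dots,\lambda_r)$ has exactly $n-k$ parts bounded by $k$; with $\widehat{N}(\lambda)=p_k\widehat{N}(\mu)$ this gives the exact identity $\HSper(n)=\sum_{k=1}^n T_k(n-k)/p_k=[z^n]G(z)$ for $n\ge 2$ (for $n=1$ one must add the empty-partition term, a harmless edge case given $\per(\emptyset)=1$). The singular analysis is also essentially right: $\log\prod_{i\le k}p_i/(p_i-z)=z\sum_{i\le k}1/p_i+\sum_{j\ge 2}(z^j/j)\sum_{i\le k}p_i^{-j}$, Mertens' second theorem gives $\sum_{i\le k}1/p_i=\log\log k+M+o(1)$, the $j\ge2$ tail converges to an analytic function of $z$, and the classical identity $M+\sum_{j\ge2}P(j)/j=\gamma$ (the same fact that underlies the paper's use of Mertens' third theorem) pins down $\Phi(1)=e^\gamma$. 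Pairing this with $p_k\sim k\log k$ gives $G(z)=-e^\gamma\log(1-z)+O(1)$ as $z\to1^-$, which is fully consistent with Theorem~\ref{thm:26}.

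The genuine gap is the one you already name: a singular estimate of the form $G(z)=-e^\gamma\log(1-z)+O(1)$ is \emph{not} enough to conclude $[z^n]G(z)\sim e^\gamma/n$. The coefficients $\HSper(n)$ are nonnegative, so a Hardy--Littlewood/Karamata Tauberian argument recovers only the Ces\`aro-averaged conclusion $\HCper(n)\sim e^\gamma\log n$, which the paper already proves by a direct combinatorial comparison with the Mertens product. To extract the individual asymptotic one needs either (i) a Delange/Flajolet--Odlyzko transfer, which requires control of $G$ on a full Hankel contour into the complex plane, not just the boundary behaviour along $(0,1)$; or (ii) a second-order singular expansion $G(z)=-e^\gamma\log(1-z)+c+o(1)$ together with a Tauberian side condition (e.g.\ that $n\,\HSper(n)$ is slowly oscillating). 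Neither is established, and neither is likely to be elementary: the differencing route through a sharpened Theorem~\ref{thm:26} also stalls, since one would need the $O(1)$ error there to be $c+o(1/n)$ after differencing, a much stronger regularity statement than the set-inclusion arguments of Section~\ref{sec:5} can deliver. So your proposal correctly reduces the conjecture to a Tauberian/regularity question but does not resolve it; the statement remains, as in the paper, a conjecture.
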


  Figure  \ref{fig:W-hat-per} plots the statistics $\HSper(n)$ against the asymptotic
  $f(x)= e^{\gamma}x$ (in green) for   perimeter  values $1 \le n \le 20$. \medskip
  %%%%%%%%%%%%%%%%%%%%%%
 % FIGURE 8 Perimeter supernoms n
 %%%%%%%%%%%%%%%%%%%%%%

  \begin{figure}[h!]
    \centering
    \includegraphics{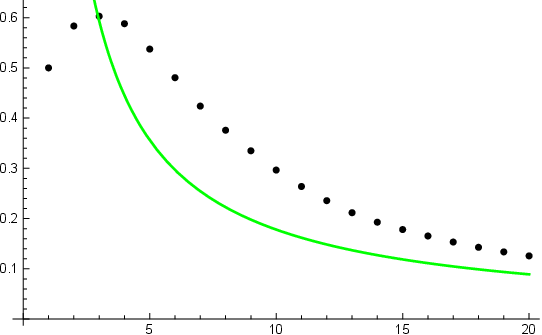}
    \caption{Plot of $\widehat{W}_{\text{per}}(n)$ in dots, $e^{\gamma}/x$ in green.}
    \label{fig:W-hat-per}
\end{figure}

We may also consider the individual  $\Tot$-ensembles.
 %%%%%%%%%%%%
% Conj 3.8
%%%%%%%%%%
\begin{conj}\label{conj:supernorm-total} 
There holds
 \begin{equation}
 \HStot(n) \sim \frac{ e^{\gamma}}{n},
 \end{equation} 
 as $n \to \infty$.
 \end{conj}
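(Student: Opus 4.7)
The plan is to reduce this to Theorem \ref{thm:27} via an elementary bijection, together with the Prime Number Theorem. The Max-part ensemble, unlike the Size and Perimeter ensembles, admits a clean product structure that survives the passage from cumulative to individual statistics, which is why this case should be tractable while Conjectures \ref{conj:supernorm-size} and \ref{conj:supernorm-permieter} remain open.

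First I would establish the exact identity
\[
\HStot(n) \;=\; \frac{1}{p_n}\,\HCtot(n).
\]
The underlying bijection is: given $\lambda$ with $\lambda_1 = n$, remove a single part of size $n$ to obtain a partition $\mu$ with all parts $\le n$; the inverse adjoins one part of size $n$ to any $\mu$ counted in $\HCtot(n)$ (including $\mu = \emptyset$, which corresponds to $\lambda=(n)$). Under this bijection $\widehat{N}(\lambda) = p_n \,\widehat{N}(\mu)$, and summing reciprocals yields the identity. Equivalently, from the Mertens-product form $\HCtot(n) = \prod_{k=1}^n (1-p_k^{-1})^{-1}$ already invoked in the proof of Theorem \ref{thm:27}, one computes directly $\HStot(n) = \HCtot(n)-\HCtot(n-1) = \HCtot(n-1)/(p_n-1) = \HCtot(n)/p_n$.

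The second step is asymptotic. By Theorem \ref{thm:27}, $\HCtot(n) = e^{\gamma}(\log n + \log\log n) + \mathcal{O}(\log\log n/\log n)$, and by the Prime Number Theorem $p_n = n\log n\,(1 + o(1))$. Combining,
\[
\HStot(n) \;=\; \frac{e^{\gamma}(\log n + \log\log n)(1 + o(1))}{n\log n\,(1 + o(1))} \;\sim\; \frac{e^{\gamma}}{n},
\]
which is the claim. There is no serious obstacle here; indeed, by feeding in the sharper Cipolla expansion $p_n = n\log n + n\log\log n - n + o(n)$ one could even extract an explicit secondary term in $n\,\HStot(n)$. The essential reason this argument does not port to Conjectures \ref{conj:supernorm-size} and \ref{conj:supernorm-permieter} is precisely the failure of the set-inclusions for non-cumulative Size and Perimeter ensembles noted in Section \ref{subsec:23}, which blocks any analogous one-line telescoping identity.
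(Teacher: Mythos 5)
Your proposal is correct, and it does more than the paper does: the paper states this as Conjecture~\ref{conj:supernorm-total} with no proof attempted (and even remarks that no numerical data is given because $\HStot(n)$ involves infinite sums), whereas your argument actually settles it. The key observation is the exact identity $\HStot(n)=\HCtot(n)/p_n$, which you can obtain either by the bijection $\lambda\mapsto\lambda\setminus\{n\}$ (well-defined since removing one copy of the largest part $n$ sends $\Tot(n)$ bijectively onto $\{\mu:\mu_1\le n\}$, and $\widehat{N}(\lambda)=p_n\widehat{N}(\mu)$ by multiplicativity of the supernorm), or directly from the Euler product
\begin{equation*}
\HStot(n)=\HCtot(n)-\HCtot(n-1)=\HCtot(n-1)\Bigl(\frac{p_n}{p_n-1}-1\Bigr)=\frac{\HCtot(n-1)}{p_n-1}=\frac{\HCtot(n)}{p_n},
\end{equation*}
using the forward-difference relation and the factorization $\HCtot(n)=\prod_{j\le n}(1-p_j^{-1})^{-1}$ that the paper already establishes in Theorem~\ref{thm:C-hat-max}. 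Feeding in Theorem~\ref{thm:27} and the estimate $p_n = n(\log n+\log\log n)(1+o(1))$ (Lemma~\ref{lem:primeest}), the leading $(\log n+\log\log n)$ factors cancel and the conjectured $\HStot(n)\sim e^{\gamma}/n$ follows. Your diagnosis of why this route does not extend to Conjectures~\ref{conj:supernorm-size} and~\ref{conj:supernorm-permieter} is also accurate: the Max-part ensembles are multiplicatively closed under adjoining/removing a part of size $n$, whereas the individual Size and Perimeter ensembles lack any comparable telescoping identity, which is precisely the failure of set inclusion that the paper notes in Section~\ref{subsec:23}. In short, this "conjecture" is in fact a corollary of results already proved in the paper, and your argument is a clean way to extract it.
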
 
 We do not present any data for $\HStot(n)$ because infinite summations are required for any finite $n$.

 Finally, if we compare Conjecture \ref{conj:supernorm-size} with Lehmer's Theorem, we
 find it can be equivalently stated as an interesting relation between  the norm and
 supernorm on the Size ensemble.
 
%%%%%%%%%%%%
% Conj. 3.9
%%%%%%%%%% 
 \begin{conj}\label{conj:norm-supernorm}
 There holds
 \begin{equation}
\Ssize (n) \, \HSsize(n) \sim  1,
 \end{equation}
 as $n \to \infty$.
 \end{conj}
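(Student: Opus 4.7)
The key observation is that Conjecture \ref{conj:norm-supernorm} is exactly the multiplicative combination of Lehmer's asymptotic $\Ssize(n) \sim e^{-\gamma} n$ from Theorem \ref{thm:11} with the asymptotic $\HSsize(n) \sim e^{\gamma}/n$ from Conjecture \ref{conj:supernorm-size}. Since Lehmer's theorem is already established, the proof reduces to Conjecture \ref{conj:supernorm-size}, and the plan is to attack the latter directly by generating function methods.

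The starting point is the generating function
\begin{equation*}
F(x) := \sum_{n \ge 0} \HSsize(n)\, x^n = \prod_{k \ge 1} \frac{1}{1 - x^k/p_k},
\end{equation*}
which follows from expanding each geometric factor and using, for $\lambda = [1^{m_1} 2^{m_2} \cdots]$, the relations $|\lambda| = \sum_k k\, m_k$ and $\widehat{N}(\lambda) = \prod_k p_k^{m_k}$. Since $\sum_k 1/p_k$ diverges, $F(x) \to \infty$ as $x \to 1^-$. I would next establish
\begin{equation*}
F(x) \;\sim\; e^{\gamma}\, \log\frac{1}{1-x} \qquad \text{as } x \to 1^-,
\end{equation*}
by splitting $\log F(x) = -\sum_{k \ge 1} \log(1 - x^k/p_k)$ at a cutoff $K = K(x)$ chosen so that $x^k$ is close to $1$ for $k \le K$ and $x^k/p_k$ is small and summable for $k > K$; a natural choice is $K \asymp 1/(1-x)$. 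Replacing $x^k$ by $1$ in the head of the sum and bounding the error in the spirit of Mertens' third theorem (much as already done in Section \ref{sec:5} for the proofs of Theorems \ref{thm:27}--\ref{thm:25}) should give $\sum_{k \le K} -\log(1 - 1/p_k) = \log\log(1/(1-x)) + \gamma + o(1)$, and exponentiating yields the claimed asymptotic for $F(x)$.

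The main obstacle lies in passing from this asymptotic for the generating function to the individual coefficient asymptotic $\HSsize(n) \sim e^{\gamma}/n$. Karamata's Tauberian theorem applied with the slowly varying weight $\log(1/(1-x))$ only yields the cumulative result $\HCsize(n) \sim e^{\gamma} \log n$, which is Theorem \ref{thm:25}; extracting the non-cumulative asymptotic requires additional regularity, most naturally monotonicity of $n\,\HSsize(n)$ or of a smoothed variant. Such monotonicity is doubtful in view of the parity effects visible for $\SsizeX(n)$ in Figure \ref{fig:W-size-1}, which suggest analogous oscillations for $\HSsize(n)$. The most promising route will be to extend $F(x)$ analytically to a $\Delta$-domain about $x=1$ and then apply Flajolet--Odlyzko transfer theorems; however, the singularities $x = p_k^{1/k}\zeta$ (for $\zeta$ a $k$-th root of unity) accumulate on the unit circle as $k \to \infty$, so controlling $F(x)$ uniformly on such a domain, rather than merely along the real axis, looks like the chief technical difficulty in turning this plan into a theorem.
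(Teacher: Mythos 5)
The statement you are asked to prove is a conjecture; the paper does not prove it, so there is no proof to compare against. What the paper \emph{does} say is exactly your opening observation: Conjecture~\ref{conj:norm-supernorm} is equivalent, modulo Lehmer's Theorem~\ref{thm:11}, to Conjecture~\ref{conj:supernorm-size}. Up to that point you and the authors are in complete agreement, and you have correctly identified that the real content lies in proving $\HSsize(n) \sim e^{\gamma}/n$.

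Your generating-function observations past that point are sound and are not in the paper. The identity $\sum_{n\ge 0}\HSsize(n)x^n = \prod_{k\ge 1}(1 - x^k/p_k)^{-1}$ is correct, and so is the singular asymptotic $F(x) \sim e^{\gamma}\log\frac{1}{1-x}$ as $x\to 1^-$: writing $\log F(x) = \sum_k x^k/p_k + \sum_k\sum_{m\ge 2}\frac{x^{km}}{m p_k^m}$, the double sum converges at $x=1$ to $B:=\sum_p\sum_{m\ge 2}\frac{1}{m p^m}$, while the first sum is $\log\log\frac{1}{1-x}+M+o(1)$ by Mertens' second theorem and Abel summation, and the identity $\gamma = M + B$ gives the stated constant. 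Coupled with Karamata's Tauberian theorem this would give a clean, self-contained proof of Theorem~\ref{thm:25}, $\HCsize(n)\sim e^{\gamma}\log n$, which is a genuinely different (and in some ways more conceptual) argument than the paper's construction of the explicit subfamily $\sX(n^2)$ of partitions in Section~\ref{subsec:54}; what it buys you is that both the upper and lower bounds fall out at once, and it explains transparently where $e^{\gamma}$ comes from.

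But you have correctly diagnosed that this does not prove the conjecture. A Tauberian theorem for a slowly varying singularity $L(\frac{1}{1-x})=e^{\gamma}\log\frac{1}{1-x}$ controls only the partial sums $\sum_{k\le n}a_k$, not the coefficients $a_n$ themselves; to get $a_n \sim e^{\gamma}/n$ one needs either a one-sided regularity hypothesis (eventual monotonicity of $n\,\HSsize(n)$, or some Landau/high-indices-type condition) or a genuine complex-analytic continuation of $F$ to a $\Delta$-domain, and as you note the accumulation of singular points $p_k^{1/k}\zeta_k$ on $|x|=1$ makes the latter nontrivial. Your assessment that the difficulty is real, and your scepticism about monotonicity in light of the parity oscillations visible in Figure~\ref{fig:W-size-1}, are both reasonable. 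In short: your proposal correctly reduces the conjecture, contributes a correct and useful asymptotic for the generating function that the paper does not have, but leaves the conjecture open for the same reason the authors left it open — no Tauberian-type theorem applicable here delivers the coefficient asymptotics without additional regularity input that is not yet available.
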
 

 Euler's constant does not appear  in  this asymptotic relationship. 
 Figure \ref{fig:WWproduct} plots the product of the two statistics with the conjectured asymptotic of $1$,
 for $1 \le n \le 70.$
 
 %%%%%%%%%%%%%%%%%%%%%%%%%%%%%
 % FIGURE 9 (size norms) (size supernorms) product
 %%%%%%%%%%%%%%%%%%%%%%%%%55555

 \begin{figure}[h!]
% \label{fig:statprod}
    \centering
    \includegraphics{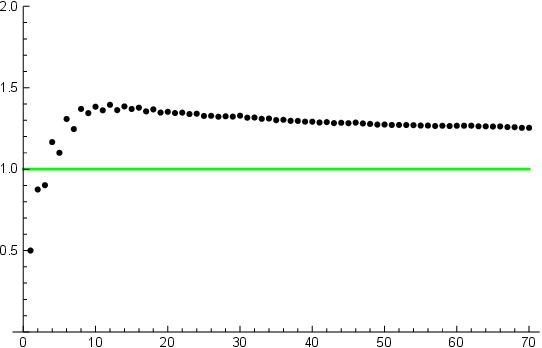}
   \caption{Plot of $W_{\text{size}}(n)\widehat{W}_{\text{size}}(n)$ in dots, $1$ in green.}
      \label{fig:WWproduct}
\end{figure}

 %%%%%%%%%%%%%%%%%%%%%%%%%%%%%%
%
% Section 4
%
%%%%%%%%%%%%%%%%%%%%%%%%%%%%%%%%%
\section{ Preliminary estimates for sums of reciprocal supernorms }\label{sec:4} 
\setcounter{equation}{0}

The proofs for the supernorm use known estimates for the size of
the $k$-th prime. We number the primes $p_j$ in increasing order, with  $p_1=2, p_2= 3, p_3=5, \cdots$.
We also use estimates  for the sum of the reciprocal primes
$R_k= \sum_{j=1}^k \frac{1}{p_j}$. 

We use the big $O$-notation for an error term to be multiplied by an unspecified positive constant,
dependent on a specified domain, 
which in the theorems in Section \ref{sec:2} is $[10, \infty)$. 
 We also use the $\mathcal{O}^{\ast}$-notation,
to indicate a remainder term with the implied  constant  being the absolute constant $1$ over the specified domain.

We start with estimates for the partial sums of reciprocal primes below $x$. It is originally due to
Mertens with the remainder term $O( \frac{1}{\log x})$, and is called Mertens' second theorem. 
%%%%%%%%%%%%%%%%%%%
% Lemma 4..1 (sum over primes) 
%%%%%%%%%%%%%%%%%%%
\begin{lem}\label{lem:mertens2}
{\rm (Sum of reciprocal primes)} 
 There is an explicit constant $M$ (Mertens' constant) such that, for  all $x\ge 2,278,383$,
\begin{align}
\label{eqn:reciprocal-prime-sum}
    \sum_{p\le x}\frac{1}{p}=\log\log x+M+\mathcal{O}^*\left(\frac{1}{5\log^3x}\right).
\end{align}
\end{lem}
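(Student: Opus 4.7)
The plan is to obtain this as an effective (fully explicit) version of Mertens' second theorem. The classical statement with weaker remainder $O(1/\log x)$ goes back to Mertens (1874) and follows from Chebyshev-type estimates for $\theta(x)=\sum_{p\le x}\log p$ combined with partial summation. The strengthened remainder $1/(5\log^3 x)$ with the explicit threshold $x\ge 2{,}278{,}383$ is a standard explicit refinement available in the literature on computational analytic number theory (Rosser--Schoenfeld, Dusart, and subsequent refinements), based on known explicit zero-free regions for the Riemann zeta function.

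The key steps I would carry out are the following. First, I would write
\begin{equation*}
\sum_{p\le x}\frac{1}{p}=\int_{2^-}^{x}\frac{d\theta(t)}{\log t}
\end{equation*}
and apply Abel summation (integration by parts) to rewrite this as
\begin{equation*}
\sum_{p\le x}\frac{1}{p}=\frac{\theta(x)}{\log x}+\int_{2}^{x}\frac{\theta(t)}{t\log^2 t}\,dt.
\end{equation*}
Next, I would split $\theta(t)=t+(\theta(t)-t)$ on the right. The main-term integral $\int_2^x dt/(\log t)$ together with $x/\log x$ can be evaluated, using $\int dt/\log t = \mathrm{li}(t)$ and asymptotic expansions of $\mathrm{li}(x)$, to produce $\log\log x$ plus an absolute constant. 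The constant that appears as $x\to\infty$ after subtracting $\log\log x$ is taken as the definition of Mertens' constant $M$.

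Then I would insert a known explicit effective bound of the shape $|\theta(t)-t|\le C\,t/\log^{k}t$ (with sufficiently small $C$ and large enough $k$), valid for $t$ exceeding a computable threshold, and substitute this into the remainder integrals. Combining with a finite numerical verification up to the threshold controls all terms below the threshold. The arithmetic here would be bookkept carefully so that the final remainder term is bounded by $\tfrac{1}{5}\log^{-3}x$ from $x\ge 2{,}278{,}383$ onward, yielding the $\mathcal{O}^{\ast}$ statement.

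The main obstacle is the bookkeeping of constants: one must choose the explicit bound on $\theta(t)-t$ so that after integrating $\int_{x}^{\infty}|\theta(t)-t|/(t\log^2 t)\,dt$ (the tail controlling the error from replacing the upper limit by $\infty$, which absorbs into $M$) and $|\theta(x)-x|/\log x$, every constant aligns to give precisely the $1/5$ coefficient at the prescribed threshold. Rather than reprove the underlying explicit $\theta(x)$ bound from scratch, I would cite the relevant tabulated estimate (as in Dusart) and verify the arithmetic on the threshold, because the content of Mertens-type estimates of this precision is entirely an exercise in carefully propagating known explicit prime-counting bounds through partial summation.
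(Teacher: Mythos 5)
The paper's own proof of Lemma~\ref{lem:mertens2} is a one-line citation to Dusart \cite[Theorem 5.6]{Dusart:18}, and your proposal in the end also defers to Dusart for the underlying explicit $\theta$-bounds, so at bottom you are proposing the same thing, with the caveat that the exposition you interpose is not optional scaffolding but \emph{is} Dusart's proof, so one might as well cite him. That said, your Abel-summation setup contains a genuine formula error. The identity
\begin{equation*}
\sum_{p\le x}\frac{1}{p}=\int_{2^-}^{x}\frac{d\theta(t)}{\log t}
\end{equation*}
is false: since $d\theta$ places mass $\log p$ at each prime $p$, the right-hand side equals $\sum_{p\le x}\frac{\log p}{\log p}=\pi(x)$, not $\sum_{p\le x}1/p$. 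Correspondingly, your second display $\tfrac{\theta(x)}{\log x}+\int_{2}^{x}\tfrac{\theta(t)}{t\log^2 t}\,dt$ is the standard partial-summation expansion of $\pi(x)$ in terms of $\theta$, again not of $\sum 1/p$. The correct Stieltjes representation is
\begin{equation*}
\sum_{p\le x}\frac{1}{p}=\int_{2^-}^{x}\frac{d\theta(t)}{t\log t},
\end{equation*}
and integrating by parts yields
\begin{equation*}
\sum_{p\le x}\frac{1}{p}=\frac{\theta(x)}{x\log x}+\int_{2}^{x}\theta(t)\,\frac{\log t+1}{t^{2}\log^{2}t}\,dt.
\end{equation*}
Only after this correction does substituting $\theta(t)=t$ produce $\log\log x$ plus a constant; as written, your main-term integral would produce $\operatorname{li}(x)$, not $\log\log x$. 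With the corrected kernel the remainder of your plan (split $\theta=t+(\theta-t)$, insert an explicit $|\theta(t)-t|$ bound, fold the tail into the definition of $M$, and track constants to land on $\tfrac{1}{5\log^3 x}$ past the stated threshold) is the standard and correct route; but since all of the substantive content lives in the explicit $\theta$ estimate and the numerical verification that you would be importing from Dusart anyway, the paper's choice to cite \cite[Theorem 5.6]{Dusart:18} outright is the cleaner presentation.
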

\begin{proof}
 See Dusart \cite[Theorem 5.6]{Dusart:18}, page 243. (Numerically $M\approx 0.26149$.)
 %%%%%%%%%%%%%%%%%%%%%%%%%%%%%%%%%%%%%%%%%%%%%%%%%%%%
 %COMMMENT, In fact the remainder term can be made as small as $O (\exp( - C \sqrt{\log x})$, 
% and it can be made effective. However we do not know of  a completely
 %effective version being given  anywhere in the literature.
 %%%%%%%%%%%%%%%%%%%%%%%%%%%%%%%%%%%%%%%%%%%%%%%%%%%%
\end{proof}
%%%%%%%%%%%%%%%%%%%
% Remark 4.2
%%%%%%%%%%%%%%%%%%%
\begin{rmk}
It is known that the remainder term in the estimate \eqref{eqn:reciprocal-prime-sum}
in Lemma \ref{lem:mertens2} can be improved.
  The bound with  remainder term $O\left(  \exp(- C (\log x)^{1/14})\right)$  is
proved in Landau \cite[Article 55]{Landau:1909}. The power $\alpha$ of
the logarithm $O \left(  \exp(- C (\log x)^{\alpha})\right)$
be be further increased using better zero-free regions for the Riemann zeta function
obtained since that time, to at least $\alpha= \frac{1}{2}$.
\end{rmk}

We term  $P(x) = \prod_{p \le x}(1- \frac{1}{p})$ a Mertens product, 
after Mertens' work in 1874 (\cite{Mertens:1874a}, \cite{Mertens:1874b}).
see  Tenenbaum \cite[Chapter 1.6]{Tenenbaum:15}. The following result is a
improved version of
Mertens' third theorem, due to Dusart \cite{Dusart:18}. 
(Mertens obtained the weaker error term $O(\frac{1}{\log x)^2})$).
%%%%%%%%%%%%%%%%%%%
% Lemma 4.3 (Merten's product) 
%%%%%%%%%%%%%%%%%%%
\begin{lem}
\label{lem:mertens-prod}
{\rm (Mertens products)} 

(1) 
The Mertens product satisfies, for $ x \ge 2,278,383$,
\begin{align}
  P(x)=   \prod_{p\le x}\left(1-\frac{1}{p}\right)= \frac{e^{-\gamma}}{\log x}\left(1+\mathcal{O}^*\left(\frac{1}{5\log^3x}\right)\right).\label{eqn: dus}
\end{align}

(2) The reciprocal Mertens product satisfies, for  $x\ge 2,278,383$, 
\begin{align}
\label{eq:reciprocal-Mertens} 
   1/P(x)=   \prod_{p\le x}\left(1-\frac{1}{p}\right)^{-1}= e^{\gamma}\log x\left(1+\mathcal{O}^*\left(\frac{1}{4\log ^3x}\right)\right).
\end{align}
\end{lem}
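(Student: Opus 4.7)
The plan is, for (1), to deduce the asymptotic by taking logarithms and invoking Lemma \ref{lem:mertens2}. First I would write
\[
\log P(x) = \sum_{p \leq x} \log\left(1 - \tfrac{1}{p}\right) = -\sum_{p \leq x}\frac{1}{p} - \sum_{p \leq x}\sum_{k \geq 2}\frac{1}{k p^k}.
\]
The first sum is $\log\log x + M + \mathcal{O}^{\ast}(1/(5 \log^3 x))$ by Lemma \ref{lem:mertens2}. For the second sum I would use the classical identity
\[
\gamma - M \;=\; \sum_{p}\sum_{k \geq 2}\frac{1}{k p^k},
\]
which expresses Euler's constant in terms of Mertens' constant and is a standard byproduct of the proof of Mertens' third theorem. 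The tail $\sum_{p > x}\sum_{k \geq 2}\frac{1}{kp^k}$ is comparable to $\sum_{p > x} 1/(p(p-1)) = O(1/x)$, which is negligible compared to $1/\log^3 x$ on the stated range.

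Combining these ingredients gives $\log P(x) = -\log\log x - \gamma + \mathcal{O}^{\ast}(1/(5\log^3 x))$, the tail contribution being absorbed into the stated error. Exponentiating and using $|e^u - 1| \leq |u|/(1-|u|)$ for small $|u|$ then yields (1). On the stated range $\log x \geq \log(2{,}278{,}383) > 14$, so the error quantity $1/(5 \log^3 x)$ is at most $\sim 10^{-4}$; the quadratic correction from exponentiation is therefore dwarfed by the leading term and can be re-absorbed without spoiling the constant $1/5$.

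For (2), the plan is to simply invert the expansion from (1). If $P(x) = (e^{-\gamma}/\log x)(1 + \epsilon(x))$ with $|\epsilon(x)| \leq 1/(5 \log^3 x)$, then
\[
\frac{1}{P(x)} \;=\; e^{\gamma} \log x \cdot \frac{1}{1 + \epsilon(x)} \;=\; e^{\gamma} \log x \left(1 + \left(\tfrac{1}{1+\epsilon(x)} - 1\right)\right),
\]
and the geometric series bound $|1/(1+\epsilon) - 1| \leq |\epsilon|/(1 - |\epsilon|)$ combined with $1/(1 - 10^{-4}) < 5/4$ on the stated range upgrades the constant from $1/5$ to $1/4$, yielding the claimed form \eqref{eq:reciprocal-Mertens}.

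The main obstacle is verifying the explicit numerical constants $1/5$ and $1/4$ in the error terms, since this requires careful book-keeping of the contributions from (a) the error in Lemma \ref{lem:mertens2}, (b) the $O(1/x)$ tail of the double sum, (c) the exponentiation, and (d) the geometric inversion. These are all calibration issues that hinge on the value of $\log x$ at the explicit threshold $x = 2{,}278{,}383$; once one checks that the combined error stays within the claimed constants at the starting point, monotonicity of $1/\log^3 x$ handles the rest. Rather than reproving these estimates from scratch, a clean route is simply to cite Dusart \cite{Dusart:18}, who proves sharp forms of both (1) and (2) with precisely these explicit constants.
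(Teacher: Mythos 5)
Your ultimate recommendation for part (1)---cite Dusart---matches the paper exactly, which quotes Dusart \cite[Theorem 5.9]{Dusart:18} and stops there. But the derivation you sketch beforehand (obtaining (1) from Lemma \ref{lem:mertens2} by taking logs, invoking $\gamma - M = \sum_p \sum_{k\ge 2}\frac{1}{kp^k}$, estimating the tail, and exponentiating) cannot in fact yield the constant $1/5$ as stated. The error in $\log P(x)$ is the $\mathcal{O}^{*}(1/(5\log^3 x))$ inherited from Lemma \ref{lem:mertens2} \emph{plus} a tail $\sum_{p>x}\sum_{k\ge 2}\frac{1}{kp^k} = O(1/x)$, and exponentiation adds a further positive quadratic correction. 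However small, these additional terms push the multiplicative error on $P(x)$ strictly above $1/(5\log^3 x)$, so the best this route delivers is $\mathcal{O}^{*}\left((1+\delta)/(5\log^3 x)\right)$ for some $\delta > 0$. Your claim that these contributions can be "re-absorbed without spoiling the constant $1/5$" does not hold; the appearance of the same constant $1/5$ in Lemma \ref{lem:mertens2} and in \eqref{eqn: dus} reflects Dusart's two parallel explicit estimates, not something that cascades through a log-then-exponentiate argument.

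For part (2) your argument is substantively the one the paper gives: take the reciprocal of (1) and control $\left(1 \pm \frac{1}{5\log^3 x}\right)^{-1}$. The paper packages this as the elementary inequalities $\left(1 - \frac{1}{5y}\right)^{-1} \le 1 + \frac{1}{4y}$ and $\left(1 + \frac{1}{5y}\right)^{-1} \ge 1 - \frac{1}{4y}$, valid for all $y \ge 1$, whereas you bound the geometric series by $|\epsilon|/(1 - |\epsilon|)$ and appeal to the numerical fact $1/(1 - 10^{-4}) < 5/4$ on the stated range. Both arrive at the constant $1/4$; the paper's version is marginally cleaner and holds on the wider range $y \ge 1$, but the mechanism is the same.
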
 

\begin{proof}
(1) This estimate for Mertens product with the improved remainder term is given in Dusart \cite[Theorem 5.9]{Dusart:18}, page 245.

(2) Taking the reciprocal of (1) gives
\begin{align}
     e^{\gamma}\log x\left(1+\frac{1}{5\log ^3x}\right)^{-1}\le \prod_{p\le x}\left(1-\frac{1}{p}\right)^{-1}\le e^{\gamma}\log x\left(1-\frac{1}{5\log ^3x}\right)^{-1}.
\end{align}

For the bounds in \eqref{eq:reciprocal-Mertens} for $x\ge 2,278,382$,
note that for any $y\ge1$ we have 
\begin{align*}
   \left(1+ \frac{1}{5y}\right)^{-1} &\ge 1- \frac{1}{4y},\\
   \left(1-\frac{1}{5y}\right)^{-1} &\le 1+ \frac{1}{4y}.
\end{align*}
Taking $y:=\log^3x$, which is $\ge1$ for $x\ge 3$, yields the required bounds.

\end{proof} 

We use known explicit estimates for the $n$-th prime.
%%%%%%%%%%%%%%%%%%%
% Lemma 4.4 (n-th prime 
%%%%%%%%%%%%%%%%%%%

\begin{lem}\label{lem:primeest}
{\em ($n$-th prime estimates)} 

(1) For every $n\ge 6$,
\begin{align}
    n\log n\le p_n\le n(\log n+\log\log n).
\end{align}

(2) For every $n \ge 6$,
\begin{align}
\label{eq:corollary-axler} 
    \log n+\log\log n \le \log p_n \le \log n+\log\log n+\frac{\log\log n}{\log n}.
\end{align}

(3) For every $n\ge6$, 
\begin{align}\label{eq:corollary-axler2}
\log\log p_n=\log\log n+\frac{\log\log n}{\log n}+2\mathcal{O}^*\left( (\frac{\log\log n}{\log n})^2\right).
\end{align}
\end{lem}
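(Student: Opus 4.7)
The plan is to reduce all three claims to a single explicit bracketing of the $n$th prime and then propagate that bracketing through the logarithm twice, controlling each step with elementary Taylor estimates for $\log(1+u)$. Part (1) I do not reprove: I would simply invoke the now-standard explicit Rosser--Schoenfeld / Dusart bounds, which give $p_n \geq n\log n$ for all $n \geq 1$ and $p_n \leq n(\log n + \log\log n)$ for all $n \geq 6$; both are cited in Dusart's work and are tight at the stated thresholds.

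For part (2), taking $\log$ of (1) yields $\log p_n \geq \log n + \log\log n$ directly for the lower side, and for the upper side one writes
\[
\log p_n \;\leq\; \log n + \log\bigl(\log n + \log\log n\bigr) \;=\; \log n + \log\log n + \log\!\Bigl(1 + \tfrac{\log\log n}{\log n}\Bigr),
\]
after which the elementary inequality $\log(1+u)\leq u$ for $u\geq 0$ finishes the upper bound.

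For part (3), abbreviate $L := \log n$ and $L_2 := \log\log n$ and apply $\log$ once more to the inequalities in (2). This gives
\[
L_2 + \log\!\bigl(1 + L_2/L\bigr) \;\leq\; \log\log p_n \;\leq\; L_2 + \log\!\bigl(1 + L_2/L + L_2/L^2\bigr).
\]
Using the two-sided Taylor bound $u - \tfrac{1}{2}u^2 \leq \log(1+u) \leq u$ valid for $u \geq 0$, the residual $\log\log p_n - L_2 - L_2/L$ is bounded below by $-\tfrac{1}{2}(L_2/L)^2$ and above by $L_2/L^2$.

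The only real bookkeeping obstacle is verifying that $L_2/L^2 \leq 2(L_2/L)^2$ uniformly on the asserted range $n \geq 6$; this is equivalent to $L_2 \geq 1/2$, i.e.\ $n \geq e^{\sqrt{e}} \approx 5.20$, which holds throughout the claimed range. Once this is checked, both residuals fit inside the stated tolerance $2\,\mathcal{O}^{\ast}\!\bigl((L_2/L)^2\bigr)$ and the proof of (3) is complete. The argument is entirely elementary after invoking part (1) as a black box; the only mild care is in keeping the implicit constants valid uniformly for $n \geq 6$ rather than only for asymptotically large $n$.
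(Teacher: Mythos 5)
Your proposal is correct and takes essentially the same route as the paper: invoke explicit $n$th-prime bounds for (1), log once with $\log(1+u)\le u$ for (2), and log again with elementary Taylor control for (3). The one cosmetic difference is in (3): you bracket $\log\log p_n$ directly between $L_2+\log(1+L_2/L)$ and $L_2+\log(1+L_2/L+L_2/L^2)$ and then make the final check $L_2/L^2\le 2(L_2/L)^2$ (equivalently $\log\log n\ge \tfrac12$, true for $n\ge 6$) fully explicit, whereas the paper peels off the error from (2) as a separate $\log(1+y)$ factor of size $\mathcal{O}^*(L_2/L^2)$ and then adds it to the $\mathcal{O}^*((L_2/L)^2)$ coming from expanding $\log(L+L_2)$ without spelling out that sign cancellation is what makes the combined remainder fit inside $2(L_2/L)^2$ down to $n\ge 6$; your version makes that step airtight.
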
 

\begin{proof}
(1) This estimate appears in Axler \cite[page 2]{Axler:19}.

(2) For $n \ge 6$ the  lower bound $\log p_n \ge  \log n+\log\log n $
 is immediate taking logarithms of (1). 
The upper bound follows from
\begin{align}
    \log p_n  \le& \log (n(\log n+\log\log n)) \nonumber\\
                  =&\log n+\log\log n+\log (1+\frac{\log\log n}{\log n}) \nonumber \\
                   \le&\log n+\log\log n+\frac{\log\log n}{\log n},
\end{align}
with the last line using  $\log(1+x)\le x$ for $x \ge 0$ .

(3) Using \eqref{eq:corollary-axler} we obtain, for $n \ge 6$, 
     \begin{align}\label{eqn:firstlinear}
    \log\log p_n=&\log\left(\log n+\log\log n+\mathcal{O}^*\left(\frac{\log\log n}{\log n}\right)\right) \nonumber \\
        =&\log(\log n+\log\log n)+\log\left(1+ \frac{1}{\log n+ \log\log n } \mathcal{O}^*(\frac{\log\log n}{\log n}) \right)
         \nonumber\\
        =&\log(\log n+\log\log n)+ \mathcal{O}^*\left(\frac{\log\log n}{(\log n)^2}\right),  
         \end{align}
         with the last line using that the remainder term is nonnegative to get the constant $1$.

   We simplify the first term on the right side, for $n \ge 6$, 
    \begin{align}\label{eqn:secondlinear}
        \log (\log n+\log\log n)
         =& \log\log n+\log \left(1+\frac{\log \log n}{\log n}\right) \nonumber\\
        =& \log\log n+\frac{\log\log n}{\log n}+\mathcal{O}^*\left(\frac{\log\log n}{\log n}\right)^2,
    \end{align}
   using $\log(1+x) = x +\mathcal{O}^*(x^2)$ for $0 \le x \le 1$. 
    Substituting \eqref{eqn:secondlinear} into the right side of \eqref{eqn:firstlinear} yields \eqref{eq:corollary-axler2}. 
\end{proof}

We now obtain estimates for the reciprocal Mertens product for $x=p_n$, the $n$-th prime,
obtained by combining Lemma \ref{lem:mertens-prod} (2) with Lemma \ref{lem:primeest} (2).

%%%%%%%%%%%%%%%%%%%%%
% Lemma 4.5 (Reciprocal Merten's estimaes in terms of $n$-th prime)
%%%%%%%%%%%%%%%%%%%%%%%%%%
\begin{lem}
\label{lem:reciprocal-prod-prime}
For $n$ such that $p_n\ge2,278,383$,
\begin{align}
  - \frac{1}{(\log n)^2}\le  \prod_{p\le p_n}\left(1-\frac{1}{p}\right)^{-1} -e^{\gamma}(\log n+\log\log n)
    \le \,\, 2\frac{\log\log n}{\log n}.
\end{align}
\end{lem}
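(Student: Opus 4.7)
The plan is to substitute $x = p_n$ into the reciprocal Mertens estimate of Lemma \ref{lem:mertens-prod}(2) and then rewrite $\log p_n$ in terms of $\log n$ and $\log\log n$ using Lemma \ref{lem:primeest}(2). By the hypothesis $p_n \ge 2{,}278{,}383$, Lemma \ref{lem:mertens-prod}(2) gives
\[
\prod_{p\le p_n}\Bigl(1-\tfrac{1}{p}\Bigr)^{-1} = e^{\gamma}\log p_n + e^{\gamma}\log p_n\cdot \mathcal{O}^{\ast}\!\Bigl(\tfrac{1}{4\log^3 p_n}\Bigr),
\]
so the quantity to be bounded splits as a \emph{main gap} $G := e^{\gamma}\bigl(\log p_n - \log n - \log\log n\bigr)$ plus a \emph{relative error} $E := e^{\gamma}\log p_n\cdot\mathcal{O}^{\ast}\bigl(\tfrac{1}{4\log^3 p_n}\bigr)$.

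For the upper bound, Lemma \ref{lem:primeest}(2) gives $0 \le G \le e^{\gamma}\tfrac{\log\log n}{\log n}$, while using $\log p_n \ge \log n$ one has $|E|\le \tfrac{e^{\gamma}}{4\log^2 p_n}\le \tfrac{e^{\gamma}}{4\log^2 n}$. The difference in the lemma is therefore at most $e^{\gamma}\tfrac{\log\log n}{\log n} + \tfrac{e^{\gamma}}{4\log^2 n}$. Since $e^{\gamma} < 2$, it suffices to check that the slack $(2-e^{\gamma})\tfrac{\log\log n}{\log n}$ absorbs $\tfrac{e^{\gamma}}{4\log^2 n}$, which reduces to an explicit inequality of the shape $\log n\cdot\log\log n \ge C$ for a small absolute constant $C<3$; this is easily verified once the hypothesis $p_n \ge 2{,}278{,}383$ is converted to the (large) lower bound on $n$ implied by the prime counting function.

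For the lower bound, $G \ge 0$, so only $E$ can push the difference below zero. Using $\log p_n \ge \log n$ once more, $E \ge -\tfrac{e^{\gamma}}{4\log^2 n}$, and since $e^{\gamma}/4 < 1$ this is bounded below by $-\tfrac{1}{\log^2 n}$, which is the lower bound claimed.

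The main obstacle is purely bookkeeping rather than analytic: carefully tracking the explicit constant $1$ in each $\mathcal{O}^{\ast}$ application, and verifying numerically that $e^{\gamma}\approx 1.781$ leaves enough margin below $2$ (for the upper bound) and enough margin below $4$ after the factor $\tfrac14$ (for the lower bound), given the sheer size of $\log n$ forced by the threshold on $p_n$. No delicate estimate is required beyond a one-line numerical check.
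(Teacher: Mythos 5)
Your proof is correct and takes essentially the same approach as the paper: substitute $x=p_n$ into the explicit reciprocal Mertens estimate of Lemma \ref{lem:mertens-prod}(2), then trade $\log p_n$ for $\log n+\log\log n$ (up to $\tfrac{\log\log n}{\log n}$) using Lemma \ref{lem:primeest}(2). Your additive split into a ``main gap'' $G$ and a ``relative error'' $E$ is a slightly cleaner way to organize the same bookkeeping (and in fact yields marginally sharper intermediate constants, $e^{\gamma}/4$ rather than $e^{\gamma}/2$, before both land on the same final bounds), but the underlying argument is identical.
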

\begin{proof}
We start from the  estimates of Lemma  \ref{lem:mertens-prod} (2)
taking  $x=p_n$, requiring  $p_n \ge 2,278,383$.
For the lower bound we weaken the lower estimate for $P(p_n)^{-1}$  to eliminate $p_n$:
\begin{align*}
    e^{\gamma}\log p_n \left(1-\frac{1}{4(\log p_n)^3}\right)\ge& e^{\gamma}(\log n+\log\log n)\left(1-\frac{1}{4(\log n)^3}\right) \nonumber\\
    \ge&e^{\gamma}(\log n+\log\log n)-\frac{ 2e^{\gamma}\log n}{4(\log n)^3}  \\
    \ge& e^{\gamma}(\log n+\log\log n)-\frac{1}{(\log n)^2}, 
\end{align*}
where we used the left side of Lemma \ref{lem:primeest} (2) in the first line.

For the upper bound we weaken  the upper estimate for $P(p_n)^{-1}$ to eliminate $p_n$:
\begin{align*}
     e^{\gamma}\log p_n \left(1+\frac{1}{4(\log p_n)^3}\right)\le&e^{\gamma}\left(\log n+\log\log n
     +\frac{\log\log n}{\log n}\right)\left(1+\frac{1}{4(\log n)^3}\right)\\ 
     \le&e^{\gamma}\left(\log n+\log\log n+\frac{\log\log n}{\log n}\right)+\frac{2e^{\gamma}\log n}{4(\log n)^3} \\
     \le&e^{\gamma}(\log n+\log\log n)+2\frac{\log\log n}{\log n}.
\end{align*}
where we applied the right side of Lemma \ref{lem:primeest} (2) in the first line, and $e^{\gamma} \le 2$  in the last line.
\end{proof}

Finally, we give an upper bound for $S(x) := \sum_{p< x}  \frac{1}{\log p}$  when $x=p_n$,
used repeatedly in the proofs.  

%%%%%%%%%%%%%%%%%%%%%
% Lemma 346
%%%%%%%%%%%%%%%%%%%%%
\begin{lem}\label{lem:logprimesum}
{\rm (Partial sum of reciprocal logarithms of primes)} 

For all $n\ge2$,
\begin{align}
  \sum_{j=1}^n\frac{1}{\log p_j}\le \frac{3n}{\log n}.
\end{align}
\end{lem}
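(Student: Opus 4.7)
The plan is to split the sum at the threshold $j \approx \sqrt{n}$ and bound each piece by a uniform lower estimate on $\log p_j$. No refined prime-counting input is needed; I will only use the elementary inequality $p_j \ge j$, equivalently $\log p_j \ge \log j$, which is immediate from $p_1 = 2 < p_2 = 3 < \cdots$ being a strictly increasing sequence of integers starting at $2$ (a much weaker fact than Lemma \ref{lem:primeest}(1)).

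First, for $1 \le j \le \lfloor \sqrt{n}\rfloor$ use the crude bound $\log p_j \ge \log p_1 = \log 2$, so this piece contributes at most $\sqrt{n}/\log 2$. Second, for $\lfloor\sqrt{n}\rfloor < j \le n$ use $\log p_j \ge \log j > \tfrac{1}{2}\log n$, so this piece contributes at most $\frac{n - \sqrt{n}}{(1/2)\log n} \le \frac{2n}{\log n}$. Adding the two,
\begin{equation*}
\sum_{j=1}^n \frac{1}{\log p_j} \;\le\; \frac{\sqrt{n}}{\log 2} + \frac{2n}{\log n}.
\end{equation*}
To conclude with the stated constant $3$, it suffices that $\frac{\sqrt{n}}{\log 2} \le \frac{n}{\log n}$, i.e., $\log n \le (\log 2)\sqrt{n}$. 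An elementary calculus check (anchoring the inequality at one point and comparing derivatives of both sides) shows this holds for all $n \ge N_0$, with $N_0 = 20$ sufficing.

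What remains is the finite range $2 \le n \le 19$, which I would dispatch by direct tabulation: the left-hand side, beginning at $\frac{1}{\log 2} + \frac{1}{\log 3} \approx 2.35$ for $n=2$ and growing slowly, stays comfortably below $\frac{3n}{\log n}$ (which already exceeds $8$ for $n=3$ and is increasing) throughout this range. The main ``obstacle'' is really just bookkeeping rather than analysis: the splitting bound is loose for small $n$, so the argument must be supplemented by a finite numerical check rather than arising from a single clean inequality. A sharper approach using Mertens-type asymptotics would yield a better constant than $3$, but is overkill for the bound as stated, and the crude form given suffices for all the invocations of this lemma in later proofs.
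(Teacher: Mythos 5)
Your proof is correct and takes essentially the same approach as the paper: split the sum at $j=\lfloor\sqrt{n}\rfloor$, bound the small-index terms by a constant (you use $1/\log 2$ per term; the paper uses $p_j\ge j+1$ to get roughly $1$ per term) and the large-index terms by $2/\log n$ each via $\log p_j \ge \log j > \tfrac12\log n$. Your slightly cruder bound on the first piece just forces a longer finite numerical check ($2\le n\le 19$ instead of the paper's $2\le n\le 4$), a quantitative difference only.
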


\begin{proof}
One may check the bound directly for $2 \le n \le 4$.
For $n \ge 4$, since $p_j \ge j+1$ for $j \ge 2$, we have
\begin{align*}
    \sum_{j=1}^n \frac{1}{\log p_j} \le& \frac{1}{\log 2} + \frac{1}{\log 3} +\sum_{j=3}^{n} \frac{1}{\log j} \\
    \le& \frac{1}{\log 2} + (\floor{\sqrt{n}} -1)+ \sum_{j= \floor{\sqrt{n}}+ 1}^{n} \frac{1}{\log (\floor{\sqrt{n}}+1)} \\
    \le& \frac{1-\log 2}{\log 2} + \floor{\sqrt{n}} + (n - \floor{\sqrt{n}}) \frac{2}{\log n} \le \frac{3n}{\log n},
  \end{align*} 
as required. 
\end{proof}

%%%%%%%%%%%%%%%%%%%%%%%%%%%%%%
%
% Section 5
%
%%%%%%%%%%%%%%%%%%%%%%%%%%%%%%%%%
\section{Proofs for Cumulative Reciprocal Supernorms}\label{sec:5} 
\setcounter{equation}{0}

We prove  results stated in Section \ref{sec:2} for  the cumulative ensemble asymptotics.  
%%%%%%%%%%%%%%%%%%%%%%%%%%%%%%
%
% Subsection 5.1
%
%%%%%%%%%%%%%%%%%%%%%%%%%%%%%%%%%
\subsection{ Proof of Lemma  \ref{lem:supernorm-ineq}: Ensemble inequalities for reciprocal supernorms}\label{subsec:51} 

\begin{proof}[Proof of Lemma  \ref{lem:supernorm-ineq}]
(1) We have the set inclusion
$$
\{ \lambda \in \sP: \lambda \in \sP(m) \, \mbox{for some} \,\, 0 \le m \le n\}
\subseteq \{ \lambda \in \sP: \lambda \in 
%\sP_{\per}(m)
\Per(n)  \, \mbox{for some} \,\, 0 \le m \le n\}
$$
The  set inclusion holds because $\size (\lambda) \ge \per(\lambda)$ holds for every partition $\lambda$.  Now\\
 $\HCsize (n)  \le \HCper(n)$
holds because $\HCsize(n)$ counts  partitions  on the left side and $\HCper(n)$ counts 
partitions on the right side.

(2) The set inclusion of all partitions with perimeter at most $n$ in the set of all partitions
having maximum part of size at most $n$ yields $\HCper(n) \le \HCtot(n)$. 

The inclusion (1) is strict for $n \ge 3$.  The inclusion (2) is strict for $n \ge 1$.
The inclusion (3) is strict for $n \ge 1$.
\end{proof}

We study 
statistics summed over cumulative ensembles because they
satisfy  set inclusions  at each level $n$, while the level $n$ non-cumulative ensembles do not satisfy set inclusions among themselves.

%%%%%%%%%%%%%%%%%%%%%%%%%%%%%%
%
% Section 5.2
%
%%%%%%%%%%%%%%%%%%%%%%%%%%%%%%%%%
\subsection{ Proof of Theorem  \ref{thm:27}: Cumulative reciprocal supernorms: Max-part ensemble}\label{subsec:52} 

Theorem \ref{thm:27}  is an immediate consequence of the fact that $\HCtot(n)$ is the inverse of a  Mertens product,
which yields  a (well known)  asymptotic formula with remainder term.

%%%%%%%%%%%%%%%
% Theorem 4.1
%%%%%%%%%%%%%%%
\begin{thm}[asymptotic for $\HCtot(n)$]
\label{thm:C-hat-max}
We have
\begin{align}
\HCtot (n)=P(p_n)^{-1} = \prod_{j=1}^n \left(1- \frac{1}{p_j}\right)^{-1} = \prod_{j=1}^n \frac{p_j}{p_j-1}.
\end{align}
One has, for  $n\ge 2,278,333$,
\begin{align}
\HCtot (n)=e^{\gamma}(\log n+\log\log n)+\mathcal{O}^*\left(2\frac{\log\log n}{\log n}\right).
\end{align}
\end{thm}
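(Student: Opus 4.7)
The proof splits naturally into two steps: an exact product identity for $\HCtot(n)$, and then an application of the Mertens-type estimates already developed in Section~\ref{sec:4}.

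For the first step, I would switch from part-notation to part-multiplicity notation. A partition $\lambda$ with $\lambda_1 \le n$ is uniquely determined by its multiplicity vector $(m_1, m_2, \ldots, m_n) \in \ZZ_{\ge 0}^n$, and under this correspondence
\begin{equation*}
\widehat{N}(\lambda) = \prod_{j=1}^n p_j^{m_j}.
\end{equation*}
Therefore
\begin{equation*}
\HCtot(n) = \sum_{(m_1,\ldots,m_n) \in \ZZ_{\ge 0}^n} \prod_{j=1}^n p_j^{-m_j}.
\end{equation*}
Since each $p_j \ge 2$, the sum absolutely converges and factors as a product of geometric series, giving
\begin{equation*}
\HCtot(n) = \prod_{j=1}^n \sum_{m=0}^{\infty} p_j^{-m} = \prod_{j=1}^n \left(1-\frac{1}{p_j}\right)^{-1} = \prod_{j=1}^n \frac{p_j}{p_j-1} = P(p_n)^{-1}.
\end{equation*}
This is essentially the finite analogue of Euler's product expansion of $\zeta(s)$ specialized to $s=1$.

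For the second step, I would apply Lemma~\ref{lem:reciprocal-prod-prime} with $x=p_n$, which is valid once $p_n \ge 2{,}278{,}383$. That lemma states
\begin{equation*}
-\frac{1}{(\log n)^2} \;\le\; P(p_n)^{-1} - e^{\gamma}(\log n + \log\log n) \;\le\; 2\,\frac{\log\log n}{\log n}.
\end{equation*}
Substituting the product identity of the first step converts this into the claimed bound for $\HCtot(n)$, provided one checks that the lower deviation $\tfrac{1}{(\log n)^2}$ is dominated by $2\tfrac{\log\log n}{\log n}$ on the stated range, which is immediate since $\log\log n \ge 1$ and $\log n \ge 1$ imply $\tfrac{1}{(\log n)^2} \le 2\tfrac{\log\log n}{\log n}$.

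There is no real obstacle here: the combinatorial identity is a clean geometric-series computation in multiplicity coordinates, and the analytic content has been entirely absorbed into Lemma~\ref{lem:reciprocal-prod-prime} (which in turn rests on Dusart's form of Mertens' third theorem together with the explicit prime estimates of Lemma~\ref{lem:primeest}). The only point deserving care is the uniform-constant bookkeeping needed to collapse the two-sided estimate into a single $\mathcal{O}^*\!\left(2\tfrac{\log\log n}{\log n}\right)$ envelope, which just uses the elementary inequality noted above.
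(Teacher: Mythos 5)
Your proposal is correct and follows essentially the same route as the paper: expand $\HCtot(n)$ as a product of geometric series over the possible parts $1,\dots,n$, identify it as the reciprocal Mertens product $P(p_n)^{-1}$, and then invoke Lemma~\ref{lem:reciprocal-prod-prime}. The only difference is that you are slightly more explicit than the paper in justifying the geometric-series factorization via multiplicity coordinates and in checking that the lower deviation $1/(\log n)^2$ is absorbed into the one-sided $\mathcal{O}^*\bigl(2\tfrac{\log\log n}{\log n}\bigr)$ envelope, which is a harmless elaboration rather than a different argument.
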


%%%%%%%%%%%%%%%%%%%%%%%%%%
% Proof of max-ensemble supernorm bound Thm 4.1
%%%%%%%%%%%%%%%%%%%%%%%%%%
\begin{proof}
%[Proof of Theorem \ref{thm:27}, asymptotic for $\hat{C}_{max}$]
We estimate the reciprocal supernorm sum of all partitions that have no parts greater than $n$.
 The sum is given by a product of geometric series, with the product being over all possible parts, and each series containing all possible multiplicities of each part:
\begin{align}
\label{eqn:reciprocalprimeproduct}
  \HCtot(n)=& \prod_{j-1}^n \left(1+\frac{1}{p_j}+\frac{1}{p_j^{2}}+\cdots\right) \nonumber \\
    =&\prod_{j=1}^n \left(1-\frac{1}{p_j}\right)^{-1}
    = \prod_{p\le p_n}\left(1-\frac{1}{p}\right)^{-1} .
     \end{align}
  This product is the reciprocal of the Mertens product $P(p_n) = \prod_{p \le p_n}(1- \frac{1}{p}).$
By Lemma  \ref{lem:reciprocal-prod-prime} we have, combining the upper and lower bounds, 
\begin{align}
 P(p_n)^{-1} = \prod_{p < p_n} (1- \frac{1}{p})^{-1} =    e^{\gamma}(\log n+\log\log n)+\mathcal{O}^*\left( 2 \frac{\log\log n}{\log n}\right),
\end{align}
as required.
\end{proof}

%%%%%%%%%%%%%%%%%%%%%%%%%%%%%%
%
% Section 5.3 
%
%%%%%%%%%%%%%%%%%%%%%%%%%%%%%%%%%
\subsection{Proof of Theorem  \ref{thm:26}: Cumulative  reciprocal supernorms: Perimeter ensemble}\label{subsec:53} 

Theorem \ref{thm:26} is an immediate consequence of the 
of the following result giving
an asymptotic formula with remainder term.

%%%%%%%%%%%%
% Theorem  5.2
%%%%%%%%%%%

\begin{thm} [Asymptotic for $\HCper (n)$]
\label{thm:52} 
For  $n \ge 2,278,383$, there holds
\begin{align}
    \HCper(n) = e^{\gamma}(\log n+\log\log n) +\mathcal{O}^{*} \left(  4 \right).
\end{align}
\end{thm}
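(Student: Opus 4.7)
The plan is to obtain matching upper and lower bounds for $\HCper(n)$ with common main term $e^{\gamma}(\log n+\log\log n)$.

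\textbf{Upper bound.} By the ensemble inclusion in Lemma~\ref{lem:supernorm-ineq}, $\HCper(n)\le\HCtot(n)$, and Theorem~\ref{thm:C-hat-max} pins $\HCtot(n)$ to $e^{\gamma}(\log n+\log\log n)+\mathcal{O}^{*}(2\log\log n/\log n)$, well within the $\mathcal{O}^{*}(4)$ budget. The substantive task is to bound the excess
\begin{equation*}
D(n):=\HCtot(n)-\HCper(n)=\sum_{\substack{\lambda:\,\lambda_1\le n\\ \lambda_1+r(\lambda)\ge n+2}}\frac{1}{\widehat{N}(\lambda)}
\end{equation*}
by a constant. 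Parametrizing by $k=\lambda_1$ and $r=r(\lambda)$ gives $D(n)=\sum_{k=1}^{n}T_k$, where $T_k:=\sum_{r\ge n+2-k}N_k(r)$ and $N_k(r)$ sums $1/\widehat{N}(\lambda)$ over partitions with $\lambda_1=k$ and exactly $r$ parts. Its generating function
\begin{equation*}
u_k(x):=\sum_{r\ge1}N_k(r)x^r=\frac{x/p_k}{1-x/p_k}\prod_{j=1}^{k-1}\frac{1}{1-x/p_j}
\end{equation*}
converges for $|x|<p_1=2$, and $u_k(1)=\HCtot(k)-\HCtot(k-1)$ telescopes in $k$.

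\textbf{Splitting at a threshold.} Choose $K:=\lfloor n/2\rfloor$ (or $K=n-1$ for a cleaner estimate). For $k>K$, use the crude bound $T_k\le u_k(1)$ and telescope to obtain
\begin{equation*}
\sum_{K<k\le n}T_k\le \HCtot(n)-\HCtot(K),
\end{equation*}
which by Theorem~\ref{thm:C-hat-max} evaluated at $n$ and $K$ equals $e^{\gamma}\log 2+o(1)$. For $k\le K$, apply the Chernoff-style tail bound $T_k\le y^{-(n+2-k)}u_k(y)$ at $y=3/2$; since $n+2-k\ge n/2+2$ throughout, the prefactor $(2/3)^{n+2-k}$ supplies exponential decay. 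Mertens' partial-sum formula (Lemma~\ref{lem:mertens2}) applied to $-\sum_{p\le p_k}\log(1-3/(2p))=\tfrac{3}{2}\log\log k+\mathcal{O}(1)$ yields $u_k(3/2)=\mathcal{O}((\log k)^{C}/k)$ for a fixed exponent $C$, so the contribution $\sum_{k\le K}T_k$ is exponentially small in $n$. Combining gives $D(n)\le e^{\gamma}\log 2+o(1)$, whence $\HCper(n)\ge e^{\gamma}(\log n+\log\log n)-e^{\gamma}\log 2-o(1)$, which together with the upper bound fits inside the $\mathcal{O}^{*}(4)$ budget for $n\ge 2{,}278{,}383$.

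\textbf{Main obstacle.} The crucial step is the uniform polylogarithmic bound on $u_k(3/2)$, which requires isolating the contributions from the small primes $p_1=2$ and $p_2=3$ (where $(1-3/(2p_j))^{-1}$ is large) before applying Mertens' estimate to the remaining tail. The specific choice $y=3/2$ is not critical: any $y\in(1,2)$ works, trading off exponential decay against the size of the Euler product. A secondary concern is that Theorem~\ref{thm:C-hat-max} requires its argument to exceed $2{,}278{,}383$; if $K=\lfloor n/2\rfloor$ falls below this threshold, one instead takes $K=n-1$ and bounds the single term $a_n=\HCtot(n-1)/(p_n-1)=\mathcal{O}(1/n)$ directly, a refinement that in fact yields the sharper estimate $D(n)=\mathcal{O}((\log n)^{1/2}/n)=o(1)$ and correspondingly a much smaller remainder than the stated $\mathcal{O}^{*}(4)$.
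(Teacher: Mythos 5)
Your proposal is correct and takes a genuinely different route from the paper. The paper proves the lower bound by constructing a test set $\sY(m)$ of partitions whose part-multiplicities are capped at $\lfloor 2\log m/\log p_j\rfloor$, showing each such partition has perimeter at most $7m$, lower-bounding the supernorm sum over $\sY(m)$ by a truncated Euler product, and then substituting $m = n/7$; the rescaling step costs an additive $e^{\gamma}\log 7 \approx 3.47$, which is most of the $\mathcal{O}^{*}(4)$ budget. You instead bound the deficit $D(n)=\HCtot(n)-\HCper(n)$ directly, parametrizing the excluded partitions by largest part $k$ and number of parts $r$, using the generating function $u_k(x)$ for $r$ among partitions with $\lambda_1=k$, and applying a Chernoff tail bound $T_k\le y^{-(n+2-k)}u_k(y)$ at $y=3/2$. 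This is cleaner, avoids the lossy dilation by $7$, and in its refined form ($K=n-1$) shows $D(n)=\mathcal{O}((\log n)^{1/2}/n)=o(1)$, which would sharpen the theorem to $\HCper(n)=e^{\gamma}(\log n+\log\log n)+\mathcal{O}(\log\log n/\log n)$ — the same error as $\HCtot(n)$ itself — and suggests the limiting constant the authors speculate about in the remark after Theorem~\ref{thm:26} is in fact zero. The verifications you flag as the ``main obstacle'' (isolating small primes before applying Mertens, and the domain issue for Theorem~\ref{thm:C-hat-max} at $K=\lfloor n/2\rfloor$) are exactly the right things to worry about, and your workarounds are correct. To reach the paper's level of explicitness one would still need a concrete constant in $u_k(3/2)\le C(\log k)^{1/2}/k$ valid for all $k\ge 2$ and the resulting finite-$n$ bookkeeping, but the argument as sketched is sound.
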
 

Theorem \ref{thm:C-hat-max}  
gives  the upper bound for $\HCper(n)$. 
 To prove the result above it  suffices  to establish  the following  lower bound for $\HCper(n)$. 

%%%%%%%%%%%%%%%%%%%%%%%%%%%%%%%%%%%
% Lemma 5.3 Cumulative reciprocal supernorms: perimeter ensemble
%%%%%%%%%%%%%%%%%%%%%%%%%%%%%%%%%%%
\begin{lem}[Lower bound for $\HCper(n)$]
\label{lem:C-hat-per-lower}
For  $n \ge 2,278,383$, there holds
\begin{align}
    \HCper(n)\ge e^{\gamma}(\log n+\log\log n)-4.
\end{align}
\end{lem}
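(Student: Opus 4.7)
The plan is to estimate the ``excess'' $E(n) := \HCtot(n) - \HCper(n)$, which by Lemma~\ref{lem:supernorm-ineq} is nonnegative and collects $1/\widehat{N}(\lambda)$ over partitions with $\lambda_1 \le n$ but $\per(\lambda) > n$. Combined with the lower bound for $\HCtot(n)$ from Theorem~\ref{thm:C-hat-max}, the desired inequality reduces to
$$E(n) \;\le\; 4 - 2\frac{\log\log n}{\log n} \qquad (n \ge 2{,}278{,}383);$$
in fact one expects $E(n) = o(1)$, so this target is very loose, giving plenty of slack.

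First, decompose $E(n)$ by conditioning on the largest part $\lambda_1 = k$. A partition contributes iff $1 \le k \le n$ and its number of parts $r$ satisfies $r \ge n - k + 2$. Let $Q_k(m) := [z^m]\prod_{j=1}^{k}(1 - z/p_j)^{-1}$, which is the sum of $\prod_i p_{\lambda_i}^{-1}$ over multisets of size $m$ with parts drawn from $\{1,\ldots,k\}$. Factoring out $1/p_k$ for a designated copy of the largest part and letting the remaining $m := r - 1 \ge n - k + 1$ parts range freely in $\{1,\ldots,k\}$ yields
\begin{equation*}
E(n) \;=\; \sum_{k=1}^{n} \frac{1}{p_k} \sum_{m \ge n - k + 1} Q_k(m).
\end{equation*}

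Next, bound the inner tail by a Markov-type estimate: since $z^{m - R} \ge 1$ whenever $z \ge 1$ and $m \ge R$,
\begin{equation*}
\sum_{m \ge R} Q_k(m) \;\le\; z^{-R} \prod_{j=1}^{k} \frac{p_j}{p_j - z}, \qquad 1 < z < 2.
\end{equation*}
Take $z = 3/2$, turning the prefactor into the geometric $(2/3)^{n-k+1}$. For $j = 1, 2$ compute the two factors explicitly as $4\cdot 2 = 8$, and for $j \ge 3$ use $-\log(1 - \frac{3}{2 p_j}) \le \frac{3}{p_j}$ (which holds whenever $p_j \ge 5$) together with Mertens' second theorem (Lemma~\ref{lem:mertens2}) to obtain
$\prod_{j=1}^{k}\frac{p_j}{p_j - 3/2} \le C_1 (\log(k+2))^3$
for an absolute constant $C_1$.

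Assembling,
$$E(n) \;\le\; C_1 \sum_{k=1}^{n} \frac{(\log(k+2))^3}{p_k}\,(2/3)^{n - k + 1}.$$
Split the range at $k = \lceil n/2 \rceil$: for $k < n/2$ the factor $(2/3)^{n - k + 1}$ is exponentially small and dominates the entire summand; for $k \ge n/2$ use $p_k \ge k \log k$ (from Lemma~\ref{lem:primeest}) and the geometric-series bound $\sum_{u \ge 0}(2/3)^u = 3$. Both pieces are $O\big((\log n)^2/n\big)$, which for $n \ge 2{,}278{,}383$ is far below $4 - 2\frac{\log\log n}{\log n}$, yielding the lemma.

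The only genuine technical obstacle is extracting the explicit constant $C_1$ in the Mertens-style bound for $\prod_{j=1}^k \frac{p_j}{p_j - 3/2}$, which requires handling the small primes $p_1 = 2$ and $p_2 = 3$ separately before invoking the effective form of Mertens' theorem. Because the slack between $O((\log n)^2/n)$ and the target constant $4$ is enormous, the numerical values of $C_1$ and of the threshold where the Mertens estimate kicks in do not need to be sharp; the geometric-series bookkeeping at the end is entirely routine.
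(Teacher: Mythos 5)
Your proof is correct and takes a genuinely different route from the paper. The paper proves this lemma constructively: it builds an explicit family $\sY(n)$ of partitions with bounded multiplicities, shows $\sY(n) \subseteq \Per(\le 7n)$ via Lemma~\ref{lem:logprimesum}, lower-bounds the supernorm sum over $\sY(n)$ by a truncated Euler product, and then rescales $n \mapsto n/7$. That rescaling is precisely where the $-4$ comes from: the lost constant is $e^\gamma \log 7 \approx 3.34$, a permanent price of the construction. You instead bound from above the \emph{difference} $E(n) = \HCtot(n) - \HCper(n)$, reducing the problem to a tail estimate for the known quantity $\HCtot(n)$. Conditioning on the largest part $k$, peeling off one copy of $k$, and applying the Chernoff/Markov trick at $z = 3/2$ to $\prod_{j \le k}(1-z/p_j)^{-1}$ is a clean and standard maneuver; the requirement $1 < z < p_1 = 2$ is satisfied, the factorization into the two small primes plus an effective Mertens bound is routine, and the geometric split at $k \approx n/2$ shows $E(n) = O((\log n)^2 / n)$.

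What this buys you is strictly stronger than what the paper proves. Since $E(n) = o(1)$, combining with Theorem~\ref{thm:C-hat-max} gives $\HCper(n) = e^\gamma(\log n + \log\log n) + \mathcal{O}(\log\log n / \log n)$, which improves the $\mathcal{O}(1)$ error term in Theorem~\ref{thm:26} and in fact answers the question posed after that theorem: the $\mathcal{O}(1)$ term does not merely approach a negative constant, it tends to zero (so the limiting constant is $0$). The only work you have swept into the phrase ``routine'' is nailing down the effective constant $C_1$ in $\prod_{j\le k}\frac{p_j}{p_j - 3/2} \le C_1(\log(k+2))^3$ uniformly in $k \ge 1$, which requires an explicit Mertens bound valid down to small $k$ (Rosser--Schoenfeld or Dusart, plus a finite check), and then a small numerical verification that $E(n) \le 4 - 2\log\log n / \log n$ at the threshold $n = 2{,}278{,}383$. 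With $C_1$ on the order of a few hundred and the main contribution to $E(n)$ of size $\approx 6C_1(\log n)^2/n \ll 1$ there, the slack is ample and the numerics are not delicate.
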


\begin{proof}[Proof of Lemma \ref{lem:C-hat-per-lower}]
For the lower bound, we first establish a lower bound for  $\HCper(7n)$ by constructing
 a large set of partitions inside $\Per ( \le 7n)$ and lower bounding their contribution to $\HCper(7n)$.
 At the end of the argument we rescale $7n$ to $n$
and show they contribute the specified amount to the sum.

We define a set of partitions $ \sY(n)$ by the two conditions:
\begin{enumerate}
    \item Condition 1: No part  of $\lambda \in \sY(n)$ is greater than $n$;
    \item Condition 2: The multiplicity   of a part $j$ in a partition $\lambda \in \sY(n)$ is no greater than $\floor{\alpha_j}$,  
    where  $(p_j)^{\alpha_j} = n^2$, i.e. taking
    \begin{align}
    \alpha_j:=\frac{2\log n}{\log p_j}.
    \end{align}
 \end{enumerate}

The proof establishes the lower bound
in three steps: (1) verifying that every specified partition in $\sY(n)$ has perimeter no greater than $7n$; (2) giving a suitable lower bound for the reciprocal-supernorm-sum over $\sY(n)$;  (3) rescaling the bound from $7n$ to $n$.\\

Step (1): We show that the perimeter of a partition  $\lambda \in \sY(n)$ 
is no greater than $7n$. We upper bound the perimeter using the  maximum possible values for the largest part and 
the number of parts:
\begin{align}
\per(\lambda) = \lp(\lambda)  + \ell(\lambda) \le  n + \sum_{j=1}^n \frac{2 \log n}{\log p_j}.
\end{align} 
Using the bound $\sum_{j=1}^n \frac{1}{\log p_j} \le \frac{3 n}{\log n}$ of Lemma \ref{lem:logprimesum}, we obtain
\begin{align}
\per(\lambda) \le n+2\log n\left(\sum_{j=1}^n\frac{1}{\log p_j} \right) \le n+ 2 \log n \left(\frac{3 n}{\log n}\right) \le 7n.
\end{align}
Thus  $\lambda \in \Per( \le 7n),$ yielding the set inclusion  $\sY(n) \subseteq \Per(\le 7n)$.
 Step 1 has thus established that 
\begin{align}
 \HCper(7n ) \ge Y(n) := \sum_{\lambda \in \sY(n)} \frac{1}{\widehat{N}(\lambda)}
 \end{align}

Step (2): 
 The reciprocal-supernorm sum $Y(n)$ taken over  partitions  $\sY(n)$ is given by the product
\begin{align}
\label{eqn:prod-ratios}
  Y(n)=&   \prod_{j=1}^n \left(1+\frac{1}{p_j}+\cdots+\frac{1}{p_j^{\floor{\alpha_j}}}\right)\nonumber\\
       % \left(1+\frac{1}{p_2}+\cdots+\frac{1}{p_2^{\floor{\alpha_2}}}\right)\cdots\left(1+\frac{1}{p_{n}}+\cdots+\frac{1}{p_n^{\floor{\alpha_{n}}}}\right) \nonumber\\
      = &\prod_{j=1}^n \left(1-\frac{1}{p_j^{\floor{\alpha_j}+1}}\right) \prod_{j=1}^n\left(1-\frac{1}{p_j}\right)^{-1}.
\end{align}
Noting that $p_j^{\floor{\alpha_j} + 1} \ge p_j^{\alpha_j}= n^2$, we bound the first product on
the right side  for $n \ge 2$,  by
\begin{align}
\prod_{j=1}^n \left(1-\frac{1}{p_j^{\floor{\alpha_j}+1}} \right) \ge& \prod_{i=1}^{n} \left(1- \frac{1}{n^2}\right) 
= \left(1-\frac{1}{n^2} \right)^n \ge 1- \frac{2}{n}.
\end{align} 
The second product on the right side  of \eqref{eqn:prod-ratios} is bounded below using   
the leftmost  inequality in Lemma \ref{lem:reciprocal-prod-prime}:
$$
P(p_n)^{-1} =\prod_{j=1}^n\left(1-\frac{1}{p_j}\right)^{-1} \ge e^{\gamma} (\log n + \log\log n) - \frac{1}{(\log n)^2}. 
$$ 
Combining these
bounds yields, for $p_n \ge 2, 278, 383$, 
\begin{align}
\label{eqn:Y-n-bound}
  Y(n)\ge& \left( 1- \frac{2}{n} \right) \left( e^{\gamma} (\log n + \log\log n) - \frac{1}{ (\log n)^2}\right)  \nonumber \\
\ge&  e^{\gamma} (\log n + \log\log n) + \mathcal{O}^{*} \left( \frac{1}{ (\log n)^2}  +4e^{\gamma} \frac{\log n}{n}  \right) \nonumber\\
 \ge&  e^{\gamma} (\log n + \log\log n) + \mathcal{O}^{*} \left( \frac{2}{ (\log n)^2} \right).
\end{align}

Step (3); Replacing $n$ with $n/7$ yields,   for $p_n \ge 2, 278,383$, 
\begin{align}
    \HCper(n)\ge Y(\frac{n}{7}) \ge & e^{\gamma}\left(\log\frac{n}{7}+\log\log\frac{n}7\right)
    -\mathcal{O}^{*} \left( \frac{2}{ (\log n/7)^2} \right)\nonumber \\
    \ge & e^{\gamma}(\log n+\log\log n)-4,
\end{align}
using $e^{\gamma} \log 7 \le 3.3432 < 4,$ as asserted.
\end{proof}

%%%%%%%%%%%%%%%%%%%%%%%%%%%%%%
%
% Section 5.4 
%
%%%%%%%%%%%%%%%%%%%%%%%%%%%%%%%%%
\subsection{Proof of Theorem  \ref{thm:25}: Cumulative reciprocal supernorms: Size ensemble}\label{subsec:54}

We prove the lower bound in Theorem \ref{thm:25}, whose statement we recall.

%%%%%%%%%%%%%%%%
%
%  THEOREM 5.4/
%
%%%%%%%%%%%

\begin{thm}[Lower bound for $\HCsize (n)$]
\label{thm:54}
There holds
\begin{align}\label{eqn:cum-size-explicit-bound}
    \HCsize(n)\ge e^{\gamma}\log n- O(1).
    %28.
\end{align}
for $n \ge 10$.
\end{thm}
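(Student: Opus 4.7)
The plan is to compare $\HCsize(n)$ with $\HCtot(n)$ and to bound the difference by a first-moment (Markov-style) argument. Every partition of size at most $n$ automatically has all parts at most $n$, so the partitions counted by $\HCsize(n)$ form a subset of those counted by $\HCtot(n)$, and I would write
\begin{equation*}
E(n) := \HCtot(n) - \HCsize(n) = \sum_{\substack{\lambda:\, \lambda_1 \le n \\ \size(\lambda) > n}} \frac{1}{\widehat{N}(\lambda)} \le \frac{1}{n+1}\sum_{\lambda:\, \lambda_1 \le n} \frac{\size(\lambda)}{\widehat{N}(\lambda)},
\end{equation*}
where the last inequality uses $\size(\lambda)/(n+1) \ge 1$ on the excess set.

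The second step is to evaluate the first-moment sum on the right in closed form. Writing a partition in part-multiplicity notation $\lambda = [1^{m_1}2^{m_2}\cdots n^{m_n}]$, the multiplicities $m_k$ range independently over $\{0,1,2,\ldots\}$ and $\size(\lambda)=\sum_j j m_j$. Using the geometric-series identities $\sum_{m\ge 0} p^{-m}= (1-p^{-1})^{-1}$ and $\sum_{m\ge 0} m\, p^{-m} = p^{-1}(1-p^{-1})^{-2}$, separation of variables gives
\begin{equation*}
\sum_{\lambda:\,\lambda_1 \le n} \frac{\size(\lambda)}{\widehat{N}(\lambda)} = \sum_{j=1}^n j \cdot \frac{p_j^{-1}}{(1 - p_j^{-1})^2} \prod_{\substack{1 \le k \le n \\ k \ne j}} \frac{1}{1-p_k^{-1}} = P(p_n)^{-1}\sum_{j=1}^n \frac{j}{p_j - 1}.
\end{equation*}

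The third step is to estimate the arithmetic sum $\sum_{j=1}^n j/(p_j-1)$. By Lemma \ref{lem:primeest}(1), $p_j \ge j\log j$ for $j \ge 6$, so $j/(p_j-1) \le 2/\log j$ for $j\ge 6$; splitting the range $[6,n]$ at $\sqrt{n}$ yields the uniform bound $\sum_{j=1}^n j/(p_j-1) = O(n/\log n)$. Combined with $P(p_n)^{-1} = O(\log n)$ from Lemma \ref{lem:reciprocal-prod-prime}, this produces
\begin{equation*}
E(n) \le \frac{1}{n+1}\cdot O(\log n) \cdot O\!\left(\frac{n}{\log n}\right) = O(1).
\end{equation*}

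Invoking the lower bound $\HCtot(n) \ge e^{\gamma}\log n - O(1)$ from Theorem \ref{thm:27}, the identity $\HCsize(n) = \HCtot(n) - E(n)$ then gives $\HCsize(n) \ge e^{\gamma}\log n - O(1)$, as required. The main technical step is the interchange of sum and product that yields the closed-form expression for $\sum_\lambda \size(\lambda)/\widehat{N}(\lambda)$; once that identity is in hand, the prime-counting estimates compiled in Section \ref{sec:4} close the argument. I note that this approach actually delivers the stronger bound $\HCsize(n) \ge e^{\gamma}(\log n + \log\log n) - O(1)$, in agreement with the numerical observations in Figure \ref{fig:C-hat-size-loglog}.
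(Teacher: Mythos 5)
Your proof is correct, and it takes a genuinely different and cleaner route than the paper's. The paper proves the lower bound on $\HCsize(n)$ by explicitly constructing a set of partitions $\sX(n^2) \subseteq \Size(\le n^2)$ with a careful product structure (small parts of bounded multiplicity times a bounded number of large parts), verifying the size constraint, and then splitting the reciprocal-supernorm sum into $S_{\text{small}}\cdot S_{\text{large}}$ factors. The $S_{\text{large}}$ factor requires a delicate argument showing $\exp(z_n)$ is close to $2$, and the final bound is only $\HCsize(n) \ge e^{\gamma}\log n - O(1)$. Your argument instead bounds the \emph{excess} $E(n) = \HCtot(n) - \HCsize(n)$ by a Markov-type inequality and an exact product-formula evaluation of the first moment $\sum_{\lambda_1 \le n}\size(\lambda)/\widehat{N}(\lambda) = P(p_n)^{-1}\sum_{j\le n} j/(p_j-1)$, showing the excess is $O(1)$. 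The interchange of sum and product is legitimate since all geometric series involved converge absolutely and the product is finite. This is shorter, more elementary, and notably delivers the \emph{stronger} conclusion $\HCsize(n) \ge e^{\gamma}(\log n + \log\log n) - O(1)$, which closes the gap that the paper only conjectures based on the numerics in Figure \ref{fig:C-hat-size-loglog} and explicitly flags as a possible improvement in the remark following the proof of Theorem \ref{thm:54}. Your method also avoids the rescaling trick (working at $n^2$ and then dividing) that the paper relies on.

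One small point worth making explicit: the bound $\sum_{j=1}^n j/(p_j-1) = O(n/\log n)$ should note that for $j\ge 2$ one has $j\log j - 1 \ge \tfrac12 j\log j$, giving $j/(p_j-1) \le 2/\log j$ for $j\ge 6$; the handful of initial terms are $O(1)$ and the dyadic-style split at $\sqrt n$ then works as you say. With the ranges of validity of Lemma \ref{lem:reciprocal-prod-prime} and Lemma \ref{lem:primeest} being $p_n \ge 2{,}278{,}383$ and $n\ge 6$ respectively, the final inequality holds for $n$ sufficiently large; extending to $n\ge 10$ is the usual matter of enlarging the $O$-constant over the finite initial range.
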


\begin{proof}
 We replace $n$ by $n^2$ and will show:
 \begin{align}\label{eqn:main-sum-2}
     \widehat{C}_{\size}(n^2)\ge 2e^{\gamma}\log n-O(1) 
 \end{align}
 holds for some range $n\ge n_0$, which implies the desired statement for $n \ge 10$, by increasing the
 $O$-constant, noting that  $\HCsize(x)$  changes  by $O(1)$ between $x=n^2$ and $x=(n+1)^2$,
 uniformly for all $n^2 \ge 9$.

Let $k=k(n)=\floor{\log n}$.  We suppose $n \ge 10$ so that $k \ge 2$. We consider the set of partitions $\sX(n^2)$  that satisfy:
\begin{equation}
\sX(n^2) = \{ \lambda = \lambda^{'} \cup \lambda^{''} : \quad \lambda \in \sX_1(n) \quad \mbox{and} \quad  \lambda^{''} \in \sX_2(n)\}, 
\end{equation}
in which $\sX_1(n)$ is the set of  partitions $\lambda^{'}$ containing only  ``small parts" $1 \le j \le \frac{n}{4}$, each
 having multiplicity no greater
than $\floor{\alpha_j}$, where
    $(p_j)^{\alpha_j} = \frac{1}{16}n^2$, i.e.,
    \begin{align}
        \alpha_j:=(2\log n- 4 \log 2)/\log p_j.
    \end{align}
and $\sX_2(n)$  is the set of  partitions $\lambda^{''}$ containing only ``large parts" $n+1 \le j \le \frac{n^2}{2k}$, 
and having   total number of parts (counted with multiplicity)  at most $k$. (The empty partition is included  in both sets.)

The partitions in $\sX_1(n)$ and $\sX_2(n)$ have no parts in common, hence the number of such partitions is
\begin{equation}
\vert \sX(n^2) \vert = \vert \sX_1(n) \vert \cdot \vert \sX_2 (n)\vert
\end{equation}

We complete the proof of \eqref{eqn:main-sum-2} in two steps: (1) verifying that every partition in $\sX(n^2)$ 
has size no greater than $n^2$, and (2) giving a suitable lower bound for the reciprocal-supernorm-sum over $\sX(n)$.\\

Step (1): We upper bound the size of a  partition $\lambda \in \sX(n)$. We have
\begin{align}
 \label{eqn: size}
   \size(\lambda)  \le &\sum_{j=1}^{\floor{n/4}}j\cdot\alpha_j+k\left(\frac{n^2}{2k}\right)  
    \le2\log n \left(\sum_{j=1}^{\floor{n/4}}\frac{j}{\log p_j}\right) +\frac{n^2}{2}\nonumber\\
    \le& 2 \log n\left(\frac{n}{4} \sum_{j=1}^{\floor{n/4}} \frac{1}{\log p_j} \right) + \frac{n^2}{2}.
    \end{align}
 We next apply the bound $\sum_{j=1}^{\floor{n/4}} \frac{1}{\log p_j} \le \frac{3 \floor{n/4}}{\log \floor{n/4}}$ of  Lemma \ref{lem:logprimesum}. 
 and combine it  with the estimate, valid for $n \ge 625$,
        \begin{align*}
            \frac{n}{4}\frac{3(n/4)}{\log \floor{n/4}}\le \frac{3n^2}{16}\frac{1}{\log n/5}\le\frac{n^2}{4\log n},
        \end{align*}
 to obtain   
    \begin{align}
    \label{eq:X-size} 
\size( \lambda)  \le 2\log n\left(\frac{n^2}{4\log n}\right)+\frac{n^2}{2} \le  n^2.
\end{align}
Thus $\lambda \in \sP(\le n^2)$, yielding the set inclusion  $\sX(n) \subseteq \sP(\le n^2)$. 

We obtain  the bound
\begin{align}
\label{eqn:C-X-bound}
 \HCsize(n^2)= \sum_{\size(\lambda) \le n^2} \frac{1}{\widehat{N}(\lambda)} \ge \widehat{X}(n^2) := \sum_{\lambda \in \sX(n)} \frac{1}{\widehat{N}(\lambda)}.
 \end{align}

Step (2): 
The supernorm of a multiset union  $\lambda = \lambda^{'} \cup \lambda^{''} \in \sX(n^2)$ satisfies 
\begin{equation}\label{eqn:hat-N-product}
\widehat{N}(\lambda) = \widehat{N}(\lambda^{'}) \widehat{N}(\lambda^{''}). 
\end{equation} 
The product decomposition of $\sX(n^2)$  gives 
\begin{align}
\label{eqn:two-part}
  \widehat{X}(n^2) = \widehat{X}_1(n) \widehat{X}_2(n) := S_{small}(n) \cdot S_{large}(n), 
\end{align}
 where $S_{small}(n)$ is the sum of reciprocal supernorms taken over $\sX_1(n)$ and
 $S_{large}(n)$ is the sum of reciprocal supernorms taken over $\sX_2(n)$.
   We lower bound  these two sums  separately.\\

The sum $S_{small}(n)$  is given by the product
\begin{align}
    S_{small}(n)=&\prod_{j=1}^{n/4}\left(1+\frac{1}{p_j}+\cdots+\frac{1}{p_j^{\floor{\alpha_j}}}\right)=Y(\frac{n}{4}),
    \end{align}
where $Y(n)$ is given in  \eqref{eqn:prod-ratios}.
This product  is bounded below in \eqref{eqn:Y-n-bound}, for $n\ge 4 \cdot  2,278,383$, by:
\begin{align}
S_{small}(n) =  Y(\frac{n}{4})  =&e^{\gamma}\left(\log\frac{n}{4}+\log\log\frac{n}{4}\right)+\mathcal{O}^*\left(\frac{2}{(\log n/4)^2}\right) \nonumber \\
    \ge& e^{\gamma}(\log n+\log\log n)-3.
\end{align}
The inequality between the last line  and right side of previous line holds for $n\ge 520$.

The sum $S_{large}(n)$ is the sum of reciprocal supernorms of partitions in $\sX_2(n)$.
 %%%%%%%%%%%%%%%%%%%%%%%
 %\chenyang: This holds as early as $n\ge 16$}, 
 %%%%%%%%%%%%%%%%%%%%%%%
 We will  show that for $n\ge 10$, that 
\begin{align}
\label{eqn:S-large-bound}
S_{large}(n) \ge 2-2\frac{\log\log n}{\log n}-O ( \frac{1}{\log n}). 
\end{align}
Assuming \eqref{eqn:S-large-bound} is verified,  we may combine the sums $S_{small}(n)$ and $S_{large}(n)$ and get
\begin{align}
    \HCsize(n^2)\ge &S_{small}(n)S_{large}(n) \nonumber \\
    \ge&\left(e^{\gamma}\left(\log n+\log\log n\right)-3\right)\left(2-2\frac{\log\log n}{\log n}- O(\frac{1}{\log n})\right) 
         \nonumber \\
    \ge &2e^{\gamma}\log n- O \left( 1 \right).
\end{align}
which is the desired lower bound, and  it holds for $n \ge 10$, adjusting the $O$-constant.

%%%%%%%%%%%%%%%%%
% Subproof [Theorem 5.4]
%%%%%%%%%%%%%%%%%

It remains to establish \eqref{eqn:S-large-bound}.
We start from
\begin{align}
\label{eqn:supernormsum}
 S_{large}(n)= 1+S_{n;k}(1)+S_{n;k}(2)+\cdots+S_{n;k}(k),\end{align}
where $S_{n;k}(j)$ is the sum of $1/\hat{N}(\lambda'')$ taken over those $\lambda'' \in \sX_2(n)$ having
exactly  $j$ parts. 
We have the bound
\begin{align}\label{eqn:S-large-loss-bound}
    S_{n;k}(j)\ge \frac{1}{j!} \left(\frac{1}{p_{n+1}}+\frac{1}{p_{n+2}}+\cdots+\frac{1}{p_{\floor{n^2/(2k)}}}\right)^j ,
\end{align}
since the $j$th-power sum, when expanded, generates no more than $j!$ copies of every term in $S_{n;k}(j)$. 
Substituting this bound in \eqref{eqn:supernormsum} yields 
\begin{align}
\label{eqn:Slarge-bound}
S_{large}(n) \ge& \sum_{j=0}^k \frac{1}{j!} (z_n)^k
\end{align}
where we set  
\begin{align}
z_n  : =  \sum_{ n+1 \le j \le \floor{n^2/2k}} \frac{1}{p_j},
\end{align}
noting that $k = \floor {\log n}$ is determined by $n$. 
Our next goal is to estimate $z_n$.
We start with 
\begin{align}
\label{eqn:diffloglog}
z_n= \sum_{ n+1 \le j \le \floor{n^2/2k}} \frac{1}{p_j}     
 =& \log\log p_{\floor{n^2/(2k)}}-\log\log p_{n}+\mathcal{O}^*\left(\frac{1}{2(\log n)^3}\right).
    \end{align}
where the rightmost equality used  Lemma \ref{lem:mertens2} with $x= p_{\floor{n^2/2k}}$ and with  $x= p_{n+1}$,
requiring $\floor{n^2/2k}\ge n+1 \ge 2,278,383$.

By Lemma \ref{lem:primeest} (3), we have 
\begin{align}\label{eq:nasty}
    \log\log p_{\floor{n^2/(2k)}}=&\log\log\floor{\frac{n^2}{2k}}+\frac{\log\log \floor{n^2/(2k)}}{\log\floor{n^2/(2k)}}
    +2\mathcal{O}^*\left((\frac{\log\log n}{\log n})^2\right).
  \end{align}
  %%%%%%%%%%%%%%
%where $k = \floor{\log n}$. 
%%%%%%%%%%%%%%%
After 
using $|\floor{\frac{n^2}{2k}} - \frac{n^2}{2 \log n}| = O \left( \frac{n^2}{(\log n)^2}\right)$,
we obtain 
\begin{align}\label{eq:s0}
  \log\log p_{\floor{n^2/(2k)}}   =& \log\log \frac{n^2}{2\log n}+\frac{\log\log (n^2/2\log n)}{\log (n^2/2\log n)}
    +\mathcal{O}\left(\left(\frac{\log\log n}{\log n}\right)^2\right).
\end{align}
Algebraic simplification of the right side yields
\begin{align}
    \log\log p_{\floor{n^2/(2k)}}=\log\log n+\log 2+  O\left(\frac{1}{\log n}\right).
\end{align}
(The $\frac{\log\log n}{\log n}$ terms visible in \eqref{eq:nasty} cancel out.)
Substituting  into  \eqref{eqn:diffloglog} this bound along with  the estimate of Lemma \ref{lem:primeest} (3) for $\log\log p_n$  yields
for $n \ge 2,278,383$, 
\begin{align} 
\label{eqn:diffloglog2}
 z_n=  \log 2- \frac{\log\log n}{\log n}+O \left( \frac{1}{\log n}\right).
\end{align} 
This bound implies  $0< z_n < 1$ holds for sufficiently large $n\ge n_0$, say, and
an (omitted) error analysis shows it holds for $n \ge 4 \cdot 2,278,383.$
%%%%%%%%%%%%%%%%%%%%%
% The value is surprisingly close. From prev. equation 
%  it is at least 2 million. Please check on calculator 
%%%%%%%%%%%%%%%%%%%%%
To conclude the estimate,  \eqref{eqn:Slarge-bound} now yields  
\begin{align}
\label{eqn:large-approx}
   S_{large}(n) \ge&\sum_{k=0}^k \frac{1}{j!} (z_n)^j 
\ge \exp( z_n) - \sum_{j= k+1}^{\infty} \frac{z^j}{j!}
\ge \exp(z_n) - \frac{1}{k!}, 
\end{align}
Now \eqref{eqn:diffloglog2} gives 
 \begin{align}
 \exp(z_n) \ge& 2 \exp\left(- \frac{\log\log n}{\log n} -O\left( \frac{1}{\log n} \right)\right)
  \ge 2\left(1- \frac{\log\log n}{\log n} - O\left( \frac{1}{\log n}\right) \right) 
 \end{align} 
 for sufficiently large $n \ge n_0$. 

Substituting this bound in \eqref{eqn:large-approx} and using $\frac{1}{k!} = \mathcal{O}^*\left(\frac{1}{\log n}\right)$
for $n \ge 21$, we obtain 
\begin{align} 
 S_{large}(n)
  \ge& 2-2\frac{\log\log n}{\log n} - O \left( \frac{1}{\log n}\right).
\end{align}
for $n \ge 4 \cdot 2,278, 383$. Increasing the $O$-constant makes it hold for $n \ge 10.$
 This verifies   \eqref{eqn:S-large-bound}, as required. 
\end{proof}

\begin{rmk}
The discussion in Section \ref{subsubsec:222} suggests 
that the right side of \eqref{eqn:cum-size-explicit-bound} may be improvable by a term  $\log\log n$. 
A place to look for further savings in the proof  presented  is the  non-trivial loss 
present  in the inequality \eqref{eqn:S-large-loss-bound}.  
\end{rmk}

%%%%%%%%%%%%%%%%%%%%%%%%%%%%%%
%%%%%%%%%%%%%%%%%%%%%%%%%%%%%%
%
% Section 6
%%%%%%%%%%%%%%%%%%%%%%%%%%%%%%%%%
\section{Concluding Remarks}\label{sec:6} 
\setcounter{equation}{0}

We raise some problems for further work.
%%%%%%%%%%%%%%%%%%%%%%%%%%%%%%
% Subsection 6.1 \beta- parametrized multiplicative statistics
%%%%%%%%%%%%%%%%%%%%%%%%%%%%%%%%%

\subsection{Temperature variable: $\beta$-parametrized multiplicative statistics} 
All of the norm and supernorm partition statistics of this paper can be generalized to 
allow the weights to be arbitrary (real) powers:
\begin{align}
\label{eq:beta-norm} 
N_{\beta}(\lambda) := \left(\prod_{i} \lambda_i \right)^{-\beta} 
\end{align}
and
\begin{align}
\label{eq:beta-supernorm}
\widehat{N}_{\beta} (\lambda) := \left(\prod_{i}  p_{\lambda_i} \right)^{- \beta}
\end{align}

What is the asymptotic behavior of these statistics for fixed  $0\le \beta < \infty$
for the norm and for the supernorm, on the three partition
ensembles studied in this paper?

In analogy with statistical mechanics, the parameter $\beta$ may be
viewed as an extensive variable analogous to (reciprocal) temperature (when $\beta>0$), 
This paper treated the case $\beta=1$. Conjecture \ref{conj:norm-supernorm} suggests  that something
``special" is happening at the parameter value $\beta=1$, between the norm and the supernorm. 
Study of the statistics for other values of $\beta$ could add perspective
to this observation.
For $\beta=0$ the statistic 
measures  the cardinality of the ensemble. It is the infinite temperature  (``gas") regime.
The appearance of Euler's constant is likely specific to the value $\beta=1$.
%%%%%%%%%%%%%%%%%%%%%%%%%%%%%%
%
% Subsection 6.2 Parititions distinct parts
%
%%%%%%%%%%%%%%%%%%%%%%%%%%%%%%%%%
\subsection{Supernorm-weighted partitions with distinct parts}
 In  his 1972 work Lehmer \cite{Lehmer:72}  also  obtained asymptotics for  norm-weighted partitions with 
distinct parts. 
.
  
 %%%%%%%%%%
% Theorem 6.1
%%%%%%%%%%
 \begin{thm}[Lehmer]\label{thm:3} 
 The reciprocal norm statistic summed over partitions of size $n$ with distinct parts satisfies
 $$
 \SsizeA(n) \sim e^{-\gamma} ,
 $$
 as $n \to \infty$.
 \end{thm}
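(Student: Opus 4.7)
The plan is to follow Lehmer's generating-function approach used for the companion Theorems \ref{thm:11} and \ref{thm:12}. A partition with distinct parts either contains each positive integer $k$ exactly once or omits it, so the reciprocal-norm generating function factors as
\begin{align*}
F(x) := \sum_{n \ge 0} \SsizeA(n)\, x^n = \prod_{k \ge 1}\!\left(1 + \frac{x^k}{k}\right).
\end{align*}

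The analytic core is determining the behavior of $F(x)$ as $x \to 1^-$. Taking logarithms, expanding each factor by the Mercator series, and interchanging summations (justified for $|x|<1$), I would obtain
\begin{align*}
\log F(x) = \sum_{m \ge 1} \frac{(-1)^{m+1}}{m}\,\mathrm{Li}_m(x^m),
\end{align*}
with $\mathrm{Li}_m(y) = \sum_{k \ge 1} y^k/k^m$. The $m=1$ term is $-\log(1 - x)$, the sole source of singularity, while for $m \ge 2$ each $\mathrm{Li}_m(x^m) \to \zeta(m)$ boundedly as $x \to 1^-$. Invoking the classical identity $\sum_{m \ge 2} (-1)^m \zeta(m)/m = \gamma$, one concludes $F(x) \sim e^{-\gamma}/(1-x)$ as $x \to 1^-$, which is the Abelian precursor to the desired coefficient asymptotic.

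The final step is a Tauberian passage from this singularity to the pointwise limit $\SsizeA(n) \to e^{-\gamma}$; this is the main obstacle. Karamata's theorem applied to the nonnegative coefficients only yields the cumulative form $\sum_{k \le n} \SsizeA(k) \sim e^{-\gamma} n$, and one expects the sequence to be non-monotone with parity-driven subsequences of the sort already visible in Figure \ref{fig:W-size-1} for $\SsizeX(n)$, so direct monotonicity-based Tauberian arguments are unlikely to apply. The cleanest route I would try is a reduction to Theorem \ref{thm:12} via the factorization
\begin{align*}
F(x) = \tilde H(x)\cdot F^*(x), \qquad \tilde H(x) := (1+x)\prod_{k \ge 2}\!\left(1 - \frac{x^{2k}}{k^2}\right),
\end{align*}
obtained from $1 + x^k/k = (1 - x^{2k}/k^2)/(1 - x^k/k)$ and the relation $F^*(x) = (1-x)\prod_{k\ge 1}(1 - x^k/k)^{-1}$, where $F^*(x) = \sum_{n \ge 0} \SsizeX(n)\, x^n$. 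A telescoping computation gives $\tilde H(1) = 2\prod_{k \ge 2}(1 - 1/k^2) = 1$, and $\tilde H$ extends continuously to $|x| \le 1$. Provided the Taylor coefficients $\tilde h_j$ of $\tilde H$ decay rapidly enough (e.g.\ lie in $\ell^1$), the convolution identity $\SsizeA(n) = \sum_{j \ge 0} \tilde h_j\,\SsizeX(n-j)$ transfers Lehmer's limit $\SsizeX(n) \to e^{-\gamma}$ from Theorem \ref{thm:12} into $\SsizeA(n) \to \tilde H(1)\, e^{-\gamma} = e^{-\gamma}$, as required.
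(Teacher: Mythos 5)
The paper does not prove Theorem \ref{thm:3}; it cites it from Lehmer \cite{Lehmer:72} without giving an argument, so there is no in-paper proof to compare against. On its own terms, your generating-function setup is correct: $F(x) = \prod_{k\ge 1}(1+x^k/k)$, the dilogarithm expansion, the identity $\gamma = \sum_{m\ge 2}(-1)^m\zeta(m)/m$, the factorization $F=\tilde H\cdot F^*$ via $1+x^k/k = (1-x^{2k}/k^2)/(1-x^k/k)$, and the telescoping evaluation $\tilde H(1)=1$ are all right, and your caution about Karamata only giving the cumulative form is well placed.

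The one genuine obligation you leave open is the ``provided the coefficients $\tilde h_j$ lie in $\ell^1$'' clause, and since this is the entire content of the Tauberian transfer it cannot be left as an assumption. Fortunately it closes cleanly. Write $\tilde H(x)=(1+x)G(x)$ with $G(x)=\prod_{k\ge 2}\bigl(1-x^{2k}/k^2\bigr)$. Expanding the product, the coefficient of $x^n$ in $G$ is a signed sum $\sum_S (-1)^{|S|}\prod_{k\in S}k^{-2}$ over finite sets $S\subseteq\{2,3,\dots\}$ with $\sum_{k\in S}2k=n$, so its absolute value is dominated termwise by the coefficient of $x^n$ in the companion product $\prod_{k\ge 2}\bigl(1+x^{2k}/k^2\bigr)$. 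Summing over $n$ gives
\begin{align*}
\sum_{n\ge 0}\bigl|[x^n]G(x)\bigr| \;\le\; \prod_{k\ge 2}\Bigl(1+\frac{1}{k^2}\Bigr) \;=\; \frac{\sinh\pi}{2\pi} \;<\;\infty,
\end{align*}
so $G$, and hence $\tilde H=(1+x)G$, has absolutely summable Taylor coefficients. Combined with $\tilde H(1)=1$, the boundedness of $\SsizeX$, and Lehmer's limit $\SsizeX(n)\to e^{-\gamma}$ from Theorem \ref{thm:12}, a standard dominated-convergence argument on the convolution $\SsizeA(n)=\sum_{j=0}^{n}\tilde h_j\,\SsizeX(n-j)$ gives $\SsizeA(n)\to e^{-\gamma}$, completing the proof. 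With that lemma added, your argument is a complete and correct derivation; it does, of course, rest on Theorem \ref{thm:12} as a black box, which the paper likewise takes from Lehmer without proof.
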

 
  What are the analogous asymptotics for partitions with distinct parts when the norm is replaced by the supernorm? 
 In the latter case, the answer must be scaled by  some function  of $n$ which goes
 to zero as $n \to \infty$. Will Euler's constant appear in the answer?

%%%%%%%%%%%%%%%%%%%%%%%%%%%%%%
% Subsection 6.3 \beta- parametrized multiplicative statistics
%%%%%%%%%%%%%%%%%%%%%%%%%%%%%%%%%

\subsection{Prime indexing function} 

 The supernorm statistic is obtained from the norm statistic multiplicatively through the 
application term by term of the {\em  prime-indexing map} $\pdx: \NN \to \mathbb{P}$, 
where $\mathbb{P}$ denotes the
set of prime numbers, 
given by $\pdx(n)=p_n$,
the $n$-th prime, for $n \ge 1$, with  $\pdx(0)=1$. (The map $\pdx$ is called the pre-index in \cite{DJS:21}.) 
Can one uncover new information on the behavior of the arithmetic function $\pdx$
through study  of asymptotics of other functions of the norm  statistics and
supernorm statistics, and 
determination of   lower order terms
in the asymptotics?  
 %%%%%%%%%%
 %
%  7 (not numbered section) Acknowledgments
%
%%%%%%%%%%
\section*{ Acknowledgments.} We thank Robert Schneider and Robert Vaughan for  helpful references.
Work of the first author was partially supported by NSF grant DMS-1701576.
Work of the  second author was supported by the Roche and Gomez Research Fellowship from Williams College. 

 %%%%%%%%%%
 %
%  8. (not numbered section) Declarations
%
%%%%%%%%%%

\section*{Declarations.}

%%%%%%%%%%
 %
%  APPENDICES
%
%%%%%%%%%%

\appendix
\counterwithin*{equation}{section}
%\renewcommand\theequation{\thesection\arabic{equation}}

%%%%%%%%%%
%
% Appendix A on Reciprocal norms
%
%%%%%%%%%
\section{Reciprocal norm statistics}\label{sec:appendixA}
 \renewcommand\theequation{A.\arabic{equation}}

 This appendix formulates  and proves inequalities bounding  reciprocal norm statistics for the
 three partition ensembles $\Size(n), \Per(n)$ and $\Tot^{\ast}(n)$.
  
 We define  reciprocal norm partition statistics attached to the three  ensembles.
 For the size ensemble, 
\begin{equation}
\Ssize(n) := \sum_{\lambda:  |\lambda|=n} \frac{1}{N(\lambda)}.
 \end{equation}
 and for the perimeter ensemble, 
 \begin{equation}
\Sper(n) := \sum_{ \lambda: \per(\lambda)=n} \frac{1}{N(\lambda)}.
 \end{equation}  
 We also define  cumulative versions of these  statistics. The cumulative statistics
 for the size ensemble  are
\begin{equation}
\Csize(n) := \sum_{\lambda: \size(\lambda) \le n}  \frac{1}{N(\lambda)},
%%%%%%%%%%%%%
%\sum_{j=0}^n \HSsize(j)
%%%%%%%%%%%%%
\end{equation}
and for the perimeter ensemble are 
\begin{equation}
\Cper(n) := \sum_{\lambda: \per(\lambda) \le n} \frac{1}{N(\lambda)}
%%%%%%%%%%%%%%%
%\sum_{j=0}^n  \HSper (j).
%%%%%%%%%%%%%%%
 \end{equation}
All individual and cumulative sums of reciprocal norms 
over the largest part ensemble diverge for $n \ge 2$, due to the presence
of all partitions $\lambda= 1^k$ for all $k \ge 1$.
   
   The norm statistic is not affected by the parts of size $1$ in the partition $\lambda$,
   and the $\Tot$ ensemble gives infinite values for the reciprocal norm statistic.
is convenient to introduce the  $1$-removed reciprocal norm statistics  for the size ensemble,  
  \begin{equation}
\CsizeX(n) := \sum_{\lambda:\,  |\lambda| \le n, \,1 \not\in \lambda} \frac{1}{N(\lambda)},
\end{equation}  
the perimeter ensemble, 
\begin{equation}
\CperX(n) := \sum_{\lambda: \,\per(\lambda) \le n, 1 \not\in \lambda} \frac{1}{N(\lambda)}. 
\end{equation}  
and the largest part ensemble,
\begin{equation}
\CtotX(n) := \sum_{\lambda: \,\lambda_1 \le n, 1 \not\in \lambda} \frac{1}{N(\lambda)}. 
\end{equation}
where the last statistic  is now finite for all $n$.

These statistics correspond to counts over
members of restricted partition ensembles that omit partitions having a partition of size $1$.
\begin{enumerate}
\item[(1)]
The {\em restricted Size ensemble} $\Size^{\ast} (n)$, the set  of all partitions having a fixed size $|\lambda| = n$
and no part of size $1$.
\item[(2)] 
The {\em restricted Perimeter ensemble} $\Per^{\ast}(n)$, the set of all partitions having a
fixed perimeter 
$\per(\lambda)=n$, and no part of size $1$.
\item[(3)] 
The {\em restricted Max-part ensemble}, $\Tot^{\ast}(n)$, the set of all partitions having 
largest part $\lambda_1= n$, and no part of size $1$.
\end{enumerate}  
 The sets  $\Size^{\ast}(n)$, $\Per^{\ast}(n)$ are finite sets, while
 $\Tot^{\ast}(n)$ is an  infinite set for all $n \ge 2$.

 %%%%%%%%%%%%%%%%%%%%%%%%%
 %
 % Subsection   A.1 Results for supernorm statistics
 %
 %%%%%%%%%%%%%%%%%%%%%%%%%%
  \subsection{Inequalities for reciprocal norm statistics} \label{sec:211}

 We have the following  relations among the reciprocal norms summed over different sets of partitions.
 
 %%%%%%%%%%
 %
% Lemma A-1.1
%
%%%%%%%%%%

\begin{lem}
\label{lem:norm-ineq}
The norm statistics satisfy the inequalities 
\begin{equation}
\Ssize(n) \le \Sper(n) =  \CperX(n) \le \CtotX(n).
\end{equation}
We also have
\begin{equation}\label{eqn:A11}
\Ssize(n) = \CsizeX(n).
\end{equation} 
\end{lem}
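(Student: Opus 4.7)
The plan is to establish the two equalities via a single norm-preserving bijection (stripping off all parts of size $1$), and then to derive the inequalities from elementary set inclusions among the index sets of the ``one-removed'' sums $\CsizeX$, $\CperX$, $\CtotX$.

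First I would prove $\Ssize(n) = \CsizeX(n)$. The map $\lambda \mapsto \lambda^{\circ}$ that deletes every part equal to $1$ is a bijection from partitions of size exactly $n$ onto partitions of size at most $n$ having no part equal to $1$; its inverse adjoins $n - |\mu|$ ones to $\mu$. Because $1$'s contribute a factor $1$ to the norm, $N(\lambda) = N(\lambda^{\circ})$, and summing $1/N$ over matched pairs yields the equality.

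Next I would prove $\Sper(n) = \CperX(n)$ (for $n \ge 2$; the case $n = 1$ is a harmless boundary case that I would note separately). The strip-ones map requires more care here, because $\per$ depends on the number of parts. The key observation is that if $\mu$ is a nonempty partition with $\mu_1 \ge 2$ and $\ell(\mu) = s$, then adjoining $k$ ones produces a partition whose largest part is still $\mu_1$ but which has $s + k$ parts, so its perimeter equals $\per(\mu) + k$. Hence partitions $\lambda$ with $\per(\lambda) = n$ and $\lambda_1 \ge 2$ are in norm-preserving bijection with nonempty partitions $\mu$ satisfying $\mu_1 \ge 2$ and $\per(\mu) \le n$, the correspondence being $k = n - \per(\mu) \ge 0$. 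The remaining partition on the left-hand side is $\lambda = [1^n]$, which contributes $1/N([1^n]) = 1$; this matches the contribution $1/N(\emptyset) = 1$ of the empty partition to $\CperX(n)$. Summing the pieces gives the identity.

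Finally, the two inequalities reduce to the set inclusions
\begin{equation*}
\{\mu \in \sP : |\mu| \le n,\ 1 \notin \mu\} \subseteq \{\mu \in \sP : \per(\mu) \le n,\ 1 \notin \mu\} \subseteq \{\mu \in \sP : \mu_1 \le n,\ 1 \notin \mu\},
\end{equation*}
which follow from the pointwise bounds $\mu_1 \le \per(\mu) \le |\mu|$ valid for every nonempty partition, together with the fact that $\emptyset$ lies in all three sets for $n \ge 1$. Since every summand $1/N(\mu)$ is positive, the corresponding cumulative sums obey $\CsizeX(n) \le \CperX(n) \le \CtotX(n)$, and combining with the two equalities just proven gives the asserted chain. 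The only real subtlety I anticipate is the bookkeeping around $[1^n]$ and $\emptyset$ in the perimeter bijection; once that is dispatched, every other step is purely combinatorial.
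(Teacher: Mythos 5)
Your proof is correct and rests on the same key mechanism as the paper's argument---the strip-the-ones map is norm-preserving---but you reorganize the logic in a mildly different way. The paper proves $\Ssize(n) \le \Sper(n)$ directly, by constructing a norm-preserving injection $\sP(n) \hookrightarrow \Per(n)$ (adjoining $\size(\lambda') - \per(\lambda')$ parts of size $1$ to each $\lambda' \in \sP(n)$ and recovering $\lambda'$ via $\size(\lambda) = 2n - \per(\lambda')$), and then proves the two strip-ones bijections and the inclusion $\CperX(n) \le \CtotX(n)$ as separate items. You instead establish the two bijection identities $\Ssize(n) = \CsizeX(n)$ and $\Sper(n) = \CperX(n)$ first and then deduce \emph{both} inequalities from the single chain of set inclusions $\{|\mu| \le n,\, 1 \notin \mu\} \subseteq \{\per(\mu) \le n,\, 1 \notin \mu\} \subseteq \{\mu_1 \le n,\, 1 \notin \mu\}$, driven by the pointwise bounds $\mu_1 \le \per(\mu) \le |\mu|$ for nonempty $\mu$. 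This is a marginally cleaner decomposition: $\Ssize(n) \le \Sper(n)$ falls out as $\CsizeX(n) \le \CperX(n)$ and you avoid the paper's slightly fiddly injection-and-recovery argument. One point worth flagging: under the paper's stated convention $\per(\emptyset) = 1$, the equality $\Sper(1) = \CperX(1)$ actually fails (both $\emptyset$ and $[1]$ have perimeter $1$, so $\Sper(1) = 2$, while $\CperX(1) = 1$ from $\emptyset$ alone); the appendix proof silently switches to treating $\emptyset$ as having perimeter $0$. You are therefore right to set $n = 1$ aside as a boundary case, and your $[1^n] \leftrightarrow \emptyset$ bookkeeping for $n \ge 2$ is exactly what is needed.
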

 The first inequality holds by construction of a norm-preserving injection from the set of size-$n$ partitions to the set of perimeter-$n$ partitions. 
 The second inequality holds by set inclusion of the partitions of perimeter up to $n$ containing no $1$'s in the set of partitions with
 maximal part up to $n$ containing no $1$'s. 
 The third equality  \eqref{eqn:A11} is an identity obtained by removing parts of size $1$.

 %%%%%%%%%%%%%%%%%%%%%%%%%%%%%
 %
 %% Subsec A-1.2 Asymp. Perimeter--binned reciprocal norm
 %
 %%%%%%%%%%%%%%%%%%%%%%%%%
 \subsubsection{Bounds for perimeter ensemble reciprocal norms} \label{sec:212}

We show an upper bound for the reciprocal norms summed over the  Perimeter ensemble.

%%%%%%%%%%
% Theorem A.2
%%%%%%%%%%
 \begin{thm}[Perimeter ensemble reciprocal norms]\label{thm:W-per} 
 The reciprocal norm statistic summed over partitions of perimeter  $n$ satisfies, for all $n \ge 1$,
 \begin{equation}
 \label{eq:per-upper}
 \Sper(n) \le \  n.
 \end{equation} 
 \end{thm}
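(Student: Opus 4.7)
The plan is to bound $\Sper(n)$ by chaining the inequalities established in Lemma \ref{lem:norm-ineq} with an explicit closed-form evaluation of the outer term. That lemma gives
$$
\Sper(n) \;=\; \CperX(n) \;\le\; \CtotX(n),
$$
so it suffices to prove $\CtotX(n) = n$.

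To evaluate $\CtotX(n)$, I would observe that the partitions counted by $\CtotX(n)$ --- those with largest part $\le n$ and no part equal to $1$ --- are exactly the finite multisets with entries drawn from $\{2, 3, \ldots, n\}$. Since the reciprocal norm is multiplicative, $1/N(\lambda) = \prod_{j \ge 2} j^{-m_j(\lambda)}$, and distinct part-sizes contribute independently, the ensemble sum factors as a product of geometric series:
$$
\CtotX(n) \;=\; \prod_{j=2}^{n} \sum_{m=0}^\infty \frac{1}{j^m} \;=\; \prod_{j=2}^{n} \frac{1}{1 - 1/j} \;=\; \prod_{j=2}^{n} \frac{j}{j-1}.
$$
This product telescopes to $n$, with the convention that the empty product for $n = 1$ equals $1$.

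Combining the two steps yields $\Sper(n) \le n$. I do not expect any serious obstacle: Lemma \ref{lem:norm-ineq} has already absorbed the combinatorial work, by passing from the perimeter ensemble (via the norm-preserving device of deleting all $1$'s) to the $1$-free max-part ensemble, and once inside that ensemble the multiplicative product structure of partitions reduces everything to a telescoping identity. The bound is potentially loose, since the middle inequality $\CperX(n) \le \CtotX(n)$ is strict for $n \ge 4$, but the $O(n)$ order of magnitude on the right-hand side is already what the theorem asserts, so refining that step is unnecessary for the stated inequality.
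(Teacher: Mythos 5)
Your argument is correct and closely mirrors the paper's: the core of both is the identity $\Sper(n) = \CperX(n)$ (removing parts equal to $1$, which is Lemma \ref{lem:norm-ineq}(2)) followed by the telescoping product $\prod_{j=2}^n \frac{j}{j-1} = n$. The only cosmetic difference is that you route through the exact evaluation $\CtotX(n) = n$ (Theorem \ref{thm:W-tot}), whereas the paper's in-line proof of Theorem \ref{thm:W-per} applies the telescoping product directly as an upper bound on $\CperX(n)$; the paper itself remarks immediately after the theorem statement that the result ``also follows from the exact formula for $\CtotX(n)$,'' which is precisely your route.
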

 
 This result is proved by a telescoping product upper bound. 
 (It also  follows  from  the 
  exact formula  for $\CtotX(n)$ given in Theorem \ref{thm:W-tot} below.)
 Since $\Sper(n) \ge \Ssize(n)$ by Lemma \ref{lem:norm-ineq}, Lehmer's asymptotic bound  for $\Ssize(n)$ implies
 $$
 \Sper(n) \ge e^{-\gamma} n + o(n)
 $$
 as $n \to \infty$.

 %%%%%%%%%%%%%%%%%
 % FIGURE 10 Perimeter Norms
 %%%%%%%%%%%%%%%%%

  \begin{figure}[h!]
    \centering
    \includegraphics[width=6cm]{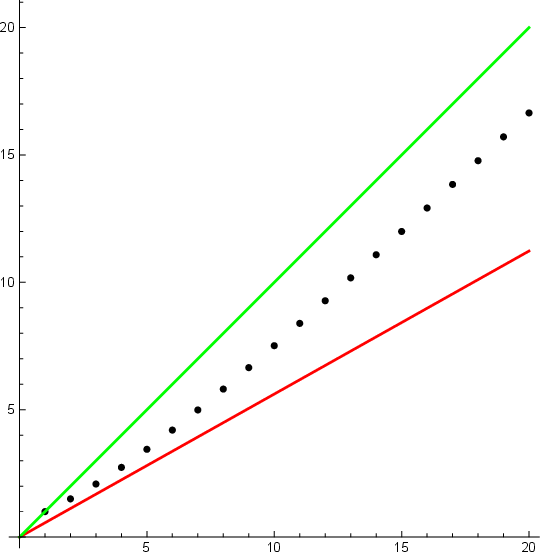}
    \caption{Plot of $W_{\text{per}}(n)$ in dots, $x$ in green, $e^{-\gamma}x$ in red.}
    \label{fig:W-per}
\end{figure}

   Figure \ref{fig:W-per} plots $\Sper(n)$ for $1 \le n \le 20$
   against the upper bound $n$ (from \eqref{eq:per-upper})
    and against Lehmer's asymptotic  lower bound $e^{-\gamma}n$. 
We can define quantities $\alpha_0$ and $\alpha_1$ by
 $$
 \alpha_0 := \liminf \frac{\Sper(n)}{n} \le \limsup \frac{\Sper (n)}{n} =: \alpha_1,
 $$
 whence $e^{-\gamma} \le \alpha_0 \le \alpha_1 \le 1$. 
 The trend of  Figure \ref{fig:W-per} suggests the possibility that $\alpha_0=\alpha_1=1$.
 
 %%%%%%%%%%%%%%%%%%%%%%%%%%%%%%%%%%
 %COMMENT. This limited plot seems consistent with $\alpha=1$.
%%%%%%%%%%%%%%%%%%%%%%%%%%%%%%%%%
 
 %%%%%%%%%%%%%%%%%%%%%%%%%%%%%
 %% Subsec A.3 Asymp. Max-part binned  reciprocal norm
 %%%%%%%%%%%%%%%%%%%%%%%%%
 \subsubsection{Bounds for maximum-part ensemble  reciprocal norms} \label{sec:213}

 We obtain closed form answers for $\StotX(n)$ which yield an exact answer for $\CtotX(n)$.

%%%%%%%%%%
% Theorem A-1.3
%%%%%%%%%%
 \begin{thm}[Total reciprocal norms]\label{thm:W-tot} 
 The reciprocal norm statistic for partitions with largest part $n$ and  no parts of size $1$,
 which is $\StotX(n)$, satisfies $\StotX(1)=0$ and, 
 for $n=0$ and $n \ge 2$, 
  \begin{equation}\label{eqn:max-part-norm-sum}
 \StotX( n)=1.
 \end{equation}
 Consequently the reciprocal norm statistic summed over the infinite set of partitions with all parts at most  $n$,
 and no parts of size $1$,  satisfies, for $n \ge 1$, 
 \begin{equation} \label{eqn:cum-max-part-norm} 
 \CtotX( n)=n.
 \end{equation}
 \end{thm}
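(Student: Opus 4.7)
The plan is to evaluate $\StotX(n)$ directly using the multiplicative structure of reciprocal-norm sums over partitions whose parts come from a prescribed set. Since $N(\lambda) = \prod_j j^{m_j(\lambda)}$, any collection of partitions defined by independent multiplicity constraints on each part size factors as a product of geometric series.

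The trivial cases are easy: $\StotX(0)$ is the sum over the single empty partition, giving $1$, while $\StotX(1) = 0$ because requiring $\lambda_1 = 1$ forces the presence of a part of size $1$, which is excluded. For $n \ge 2$, a partition $\lambda$ counted by $\StotX(n)$ is specified by multiplicities $m_j \ge 0$ for $2 \le j \le n-1$ together with $m_n \ge 1$ (to force $\lambda_1 = n$ rather than $\lambda_1 < n$). Thus
\begin{align*}
\StotX(n) = \left(\sum_{m_n \ge 1} \frac{1}{n^{m_n}}\right) \prod_{j=2}^{n-1} \left(\sum_{m_j \ge 0} \frac{1}{j^{m_j}}\right) = \frac{1/n}{1 - 1/n}\,\prod_{j=2}^{n-1} \frac{j}{j-1}.
\end{align*}
The finite product telescopes to $n-1$, and multiplying by $\frac{1}{n-1}$ gives $\StotX(n) = 1$, which is the main identity \eqref{eqn:max-part-norm-sum}.

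For the cumulative statement \eqref{eqn:cum-max-part-norm}, note that partitions with all parts at most $n$ and no part of size $1$ are partitioned by their largest part $k \in \{0, 2, 3, \ldots, n\}$ (with $k=0$ corresponding to the empty partition). Hence
\begin{align*}
\CtotX(n) = \StotX(0) + \StotX(1) + \sum_{k=2}^{n} \StotX(k) = 1 + 0 + (n-1) = n,
\end{align*}
valid for $n \ge 1$.

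There is no serious obstacle here: the only subtlety is correctly encoding the constraint that the largest part is \emph{exactly} $n$ (as opposed to $\le n$), which is handled by imposing $m_n \ge 1$. Both identities follow at once from the resulting telescoping product, and the exclusion of $1$'s is what makes the geometric series in each factor convergent.
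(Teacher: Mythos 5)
Your proposal is correct and takes essentially the same approach as the paper: both arguments factor the reciprocal-norm sum over $\Tot^{\ast}(n)$ into a product of geometric series and evaluate the telescoping product. The only cosmetic difference is that the paper pulls out one forced factor $\frac{1}{n}$ and then allows $m_n \ge 0$ in a product over $j = 2, \ldots, n$, whereas you write the $j=n$ factor as a geometric series starting at $m_n \ge 1$ and take the product over $j = 2, \ldots, n-1$; these are algebraically identical.
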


 The bound of  Theorem \ref{thm:W-per}
 for perimeter ensemble reciprocal norm statistics  follows from 
 \eqref{eqn:cum-max-part-norm} via Lemma \ref{lem:norm-ineq}(1).

 %%%%%%%%%%%%%%%%%%%%%%%%%%%%%%
%
% Section A-2 Appendix; PROOFS  
%
%%%%%%%%%%%%%%%%%%%%%%%%%%%%%%%%%
\subsection{Reciprocal norm statistics: Proofs}\label{sec:appendixB}

We prove the results stated in Section \ref{sec:211}.
%%%%%%%%%%%%%%%%%%%%%%%%%%%%%%
%
% Subsection A-2.1
%
%%%%%%%%%%%%%%%%%%%%%%%%%%%%%%%%%
\subsubsection{ Proof of Lemma  \ref{lem:norm-ineq}: Inequalities among ensembles of  reciprocal norms}\label{subsec:31} 

\begin{proof}[Proof of Lemma \ref{lem:norm-ineq}] 
(1) The subtlety in  showing  $\Ssize(n) \le \Sper(n)$ is that partitions of
size $n$ may have perimeter strictly smaller than $n$. We prove it by constructing  a norm-preserving injection from
the set of partitions $\sP(n)$ of size $n$ into the set of partitions $\Per(n)$ of perimeter $n$. 
Each  $\lambda^{'} \in \sP(n)$ has
$n = \size(\lambda') \ge \per(\lambda')$. Given $\lambda'$ form  a new partition
$\lambda := \lambda^{'} \cup \{ 1^{\size(\lambda')- \per(\lambda')}\}$ ( a multiset union).
This $\lambda$ has perimeter $\per(\lambda) = n$, because adding new parts of size  $1$ to a partition 
increases the perimeter by the number of added $1$'s. The map $\lambda' \to \lambda$
leaves the norm invariant: $N(\lambda) = N(\lambda')$. 
It  is an injective map  $\sP(n)$ into  $\Per(n)$ 
because $\lambda'$ is uniquely recoverable from $\lambda$, using the identity
 $\size(\lambda) = 2n- \per(\lambda')$ to determine how many $1$'s to remove from $\lambda$.
We conclude   $\Ssize(n) \le \Sper(n)$.

(2) To show $\Sper(n) =\CperX(n)$ we give a bijective norm-preserving
map between the two sets of partitions used in the sums.
 We again use the fact that parts of size $1$ do not change the norm. Each partition $\lambda$
of perimeter $n$, has an associated partition $\lambda'$ of some perimeter  $0\le k \le n$ obtained by dropping all the $1$'s in $\lambda$,
will then contribute  to some $\SperX(k)$. (Note that $\SperX(1)=0$.)
Conversely any partition $\lambda'$ included in the sum $\SperX(k)$ for $0 \le k \le n$  is associated to a unique
partition of perimeter $n$ by adding $(n-k)$  parts of size $1$, since the perimeter increases
by $1$ when adding a new part of size $1$. We therefore have a norm-preserving bijection between
the two sets of partitions, and $\CperX(n) = \sum_{j=0}^N \SperX(j)$.  (Note that the 
contribution of  $1$ from the empty partition  $\SperX(0)$ must be included.)

(3) To show $\CperX(n) \le \CtotX(n)$ we observe that $\CtotX(n)$ is a sum  of reciprocal norms over all partitions with largest
part at most $n$, while $\CperX(n)$ is a sum over a subset of such partitions.  

(4) To show $\Ssize(n) =\CsizeX(n)$ is proved by an argument paralleling (2). We give a bijective size-preserving
bijection between the partitions included in the sum $\Ssize(n)$ and those in the union of the sets $\SsizeX(j)$
for $0 \le j \le n$. The bijection is the same map as in (2), removing all the parts $1$. Again $\SsizeX(1)=0$.
The remaining argument is the same as (2), each part $1$  removed changes the size by $1$.
\end{proof} 
 
%%%%%%%%%%%%%%%%%%%%%%%%%%%%%%
%
% Section A-2-2
%
%%%%%%%%%%%%%%%%%%%%%%%%%%%%%%%%%
\subsubsection{Proof of Theorem \ref{thm:W-per}: Perimeter-ensemble reciprocal norms}\label{subsec:32}

 \begin{proof} [Proof of Theorem \ref{thm:W-per}] 
Write  $\lambda= \lambda^{'} \cup \{1^{m_1}\}$ where $\lambda^{'}$ is a partition containing no
parts of size $1$, and  $m_1= {\mult_1}(\lambda)$ counts the number of its parts of size $1$.
Since parts of size $1$ do not affect the value  $N(\lambda)$, 
we  have $\frac{1}{N(\lambda)}= \frac{1}{N(\lambda')}$. 
We also have 
\begin{equation}
\label{eq:perimeter}
\per(\lambda) = \per(\lambda') + m_1,
\end{equation}
because  $\lambda$ and $\lambda_1$ have the same largest part, while the number of parts changes
by $m_1$.  In this formula  $\lambda' = \emptyset$ is exceptional; the formula \eqref{eq:perimeter} continues to work correctly 
by the convention  $\per(\emptyset)=1$.  We obtain
\begin{eqnarray*} 
 \Sper(n) &=& \sum_{\per(\lambda)=n} \frac{1}{N(\lambda)} \\
 &=& \sum_{m=0}^n \left(\sum_{ {\per(\lambda')=n-m}\atop{{\mult_1}(\lambda')=0}} \frac{1}{N(\lambda')}\right) \\
 &=& 
  \sum_{ {0 \le \per(\lambda') \le n}\atop{1 \not\in \lambda'}} \frac{1}{N(\lambda')} . 
 \end{eqnarray*} 
The  desired  upper bound  for $ \Sper(n)$ follows via  a telescoping product bound
 \begin{eqnarray*}
 \sum_{{ 0 \le \per(\lambda') \le n}\atop{1 \not\in \lambda'}}\frac{1}{N(\lambda')}  &\le& \prod_{k=2}^n( 1+ \frac{1}{k}+ \frac{1}{k^2} + \cdots)=  \prod_{k=2}^n (\frac{1}{1- \frac{1}{k}})  \\
 &=& \prod_{k=2}^n \frac{k}{k-1} = (\frac{2}{1})(\frac{3}{2}) \cdots (\frac{n}{n-1} ) =  n
 \end{eqnarray*}
 and  Theorem \ref{thm:W-per} follows. 
 \end{proof}
  
 %%%%%%%%%%%%%%%%%%%%%%%%%%%%%%
%
% Section A-2.3
%
%%%%%%%%%%%%%%%%%%%%%%%%%%%%%%%%%
\subsubsection{ Proof of Theorem \ref{thm:W-tot}: Max-part ensemble reciprocal norms}\label{subsec:33}

 \begin{proof}[Proof of Theorem \ref{thm:W-tot}]
 (1) The bound comes from a telescoping product. We have $\StotX(0)=1$ (from the empty partition)
 and $\StotX(1)=0$. For $n \ge 2$, since there is at least one part of size $n$ we have
 \begin{align*}
 \StotX(n) = \frac{1}{n} \prod_{j =2}^n \left( 1+ \frac{1}{j} + \frac{1}{j^2} + \cdots \right) \\
 = \frac{1}{n} \prod_{j =2}^n \left(1- \frac{1}{n}\right)^{-1} = \frac{1}{n} \prod_{j=2}^{n} \left( \frac{j}{j-1} \right)= \frac{1}{n} \cdot n= 1.
 \end{align*}

 (2) The formula \eqref{eqn:cum-max-part-norm}  immediately follows from (1), noting that the sum
 $$
 \CtotX (n)= \sum_{j=0}^n \StotX(n).
 $$
 includes the empty partition.
 \end{proof}

%\newpage
%\appendix


\begin{thebibliography}{99}
\bibitem{Andrews:88}
% Sec 1.2
G. Andrews,
{\em The Theory of Partitions,} 
Cambridge University Press, Cambridge 1988.



\bibitem{Axler:19}
% Sect. 4.Lemma 4.4
C. Axler,
 New estimates for the $n$-th prime number,
J. Integer Sequences {\bf 22} (2019), Article 19.4.2.
\bibitem{DJS:21}
%Sect. 1.2
%Madeline Locus Dawsey, Matthew Just and Robert Schneider,
M. L. Dawsey, M. Just and R. Schneider,
A ``supernormal partition" statistic,
J. Number Theory, {\bf 242} (2022), 120--141.


\bibitem{Dusart:18}
% Sect 4. Lemma 4.1, Lemma 4.3
P. Dusart,
Explicit estimates of some functions over primes,
Ramanujan J. {\bf 45} (2018), 227--251.



\bibitem{FuTang:18}
%SEc. 1.1 perimeter
Sishuho Fu and Dazhao Tang,
Partitions with a fixed largest hook length,
Ramanujan J. {\em 45} (2018), no. 2, 375--390.

\bibitem{HardyR:1918}
% Sect. 1.1 Eq (1.2)
G. H. Hardy and S. Ramanujan,
Asymptotic formulae in combinatory analysis,
Proc. London Math. Soc., Series 2, {\bf 17}(1918), 75--115.
Reprinted in: {\em Collected Papers of Srinivasa Ramanujan,}
Chelsea, AMS 2000, 276--309.

\bibitem{KumarR:20}
% cited, sec. 1, partition norm
A. Kumar and M. Rana,
On the treatment of partitions as factorizations and further analysis,
J. Ramanujan Math. Soc. {\bf 35} (2020), 263--276.

\bibitem{Lagarias:13}
% Sect. 1.4 Lehmer at en
J. C. Lagarias,
Euler's constant: Euler's work and modern developments,
Bull. Amer. Math. Soc.  {\bf 50} (2010), no. 4, 527--628.

\bibitem{Lagarias:23} 
% Sect. 1.2
J. C. Lagarias,
Characterizations of  the supernorm partition statistic,
The Ramanujan Journal.
{\tt https://doi.org/10/1007/s11139-023-00754-w}

\bibitem{Landau:1909}
% Sect 4. Remark 4.2
E. Landau,
{\em Handbuch der Lehre von der Verteilung der Primzahlen, Erster Band}, 
B. G. Teubner, Leipzig, Berlin 1909.
[Reprint of Third (Corrected) Edition, Chelsea: New York 1974. 
Preface by P. T. Bateman.]

\bibitem{Lehmer:72}
% Sect. 1.2, 1.3
D. H. Lehmer, 
On reciprocally weighted partitions,
Acta Arithmetica {\bf 21} (1972), 379--388.
\bibitem{LinXY:22}
% Sect. 1.1 perimeter
Zhicong Lin, Huan Xiong and Sherry H. F. Yan,
Combinatorics of integer partitions with prescribed perimeter,
{\tt arXiv:2204, 02879 v1} 
\bibitem{MacMahon:1923}
% Sect 1.2 partition norm
P. A. MacMahon, 
Dirichlet series and the theory of partitions,
Proc. London Math. Soc. {\bf 22} (1923), 404--411.

\bibitem{Mertens:1874a}
% Lemma 4.1, Lemma 4.,2
F. Mertens,
Ueber einige asymptotische Gesetze der Zahlentheorie, 
J. reine Angew. Math. {\bf 77} (1874), 289--338.
\bibitem{Mertens:1874b}
%Lemma 4.1, Lemma 4.2
F. Mertens, 
Ein Beitrag zur analytischen Zahlentheorie,
J. reine Angew. Math. {\bf 78} (1894), 46--62.

\bibitem{MontgomeryV:07}
% not cited yet; Mertens
H. L. Montgomery and R. C. Vaughan,
\textit{Multiplicative Number Theory I. Classical Theory,}
Cambridge University Press, Cambridge 2007.


\bibitem{Sch:16}
%Sect. 1.2 norm [ Cited now]
R. Schneider,
Arithmetic of partitions and the $q$-bracket operator,
Proc. Amer. Math. Soc. {\bf  145} (2016), 1953--1968.

\bibitem{Sch:18}
%Sect. 1.2 norm
R. Schneider,
Eulerian series, zeta functions and the arithmetic of partitions, 
Thesis: Emory University 2018, 228pp. 


\bibitem{SchS:20}
% Sect. 1.2 norm
R. Schneider and A. V. Sills,
The product of parts or ``norm" of a partition,
Integers {\bf 20A} (2020), 1557-1565.

\bibitem{Stan1}
%Sec 2.0
R. P. Stanley,
{\em Enumerative Combinatorics, Volume I. Second Edition.}
Cambridge University Press, Cambridge 2012.
\bibitem{Stan2}
% Sec. 2.0
R. P, Stanley,
{\em  Enumerative Combinatorics, Volume II.}  
Cambridge University Press, Cambridge 1999. 

\bibitem{Tenenbaum:15}
%Cited Sect. 2.2 , Lemma 2.5, just before Lemma 4.3.
G. Tenenbaum,
\textit{Introduction to Analytic and Probabilistic Number Theory,}
Third Edition, American Math. Soc., Providence, RI 2015.
\end{thebibliography}
\end{document}